\declaretheorem[numberwithin=section]{theorem}
\declaretheorem[sibling=theorem]{corollary}
\declaretheorem[sibling=theorem]{lemma}
\theoremstyle{definition}
\declaretheorem[sibling=theorem]{example}
\theoremstyle{remark}
\declaretheorem[sibling=theorem]{remark}
\newcommand{\C}{\mathbb{C}}
\newcommand{\R}{\mathbb{R}}
\newcommand{\Z}{\mathbb{Z}}
\newcommand{\Bcal}{\mathcal{B}}
\newcommand{\Ecal}{\mathcal{E}}
\newcommand{\Fcal}{\mathcal{F}}
\newcommand{\Rcal}{\mathcal{R}}
\newcommand{\fcone}{\mathrm{fcone}}
\newcommand{\tcone}{\mathrm{tcone}}
\newcommand{\aff}{\mathrm{aff}}
\newcommand{\spann}{\mathrm{span}}
\newcommand{\innt}{\mathrm{int}}
\newcommand{\lin}{\mathrm{lin}}
\newcommand{\conv}{\mathrm{conv}}
\newcommand{\vol}{\mathrm{vol}}
\newcommand{\Proj}{\mathrm{Proj}}
\newcommand{\innerp}[2]{\left \langle #1 , #2\right \rangle}
\newcommand{\lp}{\left (}
\newcommand{\rp}{\right )}
\newcommand{\trans}{{\sf T}}
\newcommand{\Bd}{{\mathsf{B}_d}}
\newcommand*\diff{\mathop{}\!\mathrm{d}}
\begin{document}

\begin{abstract}
Macdonald studied a discrete volume measure for a rational polytope $P$, called solid angle sum,  that gives a natural discrete volume for $P$. We give  a local formula for the codimension two quasi-coefficient of the solid angle sum of $P$.  
We also show how to recover the classical Ehrhart quasi-polynomial from the solid angle sum and in particular we find a similar local formula for the codimension one and codimension two quasi-coefficients. These local formulas are naturally valid for all positive real dilates of $P$.   

An interesting open question is to determine necessary and sufficient conditions on a polytope $P$ for which the discrete volume of $P$ given by the solid angle sum equals its continuous volume: $A_P(t) = \vol(P) t^d$.   We prove that a sufficient condition is that  $P$ tiles $\R^d$ by translations, together with the Hyperoctahedral group.
\end{abstract}

\keywords{Lattice points, solid angle, Poisson summation, Fourier transform, polytopes, Bernoulli polynomial, discrete volume, Ehrhart polynomial}

\subjclass[2010]{Primary: 52C07; Secondary: 26B20, 52B20, 52C22}

\thanks{F.C.M was supported by grant \#2017/25237-4, from the S\~ao Paulo Research Foundation (FAPESP). This work was partially supported by Conselho Nacional de Desenvolvimento Cient\'\i fico e Tecnol\'{o}gico --- CNPq (Proc. 423833/2018-9) and by the Coordena\c{c}\~ao de Aperfei\c{c}oamento de Pessoal de N\'\i vel Superior --- Brasil (CAPES) --- Finance Code~001.}

\title{Coefficients of the solid angle and Ehrhart quasi-polynomials}

\author{Fabr\'\i cio Caluza Machado}
\address{F.C. Machado, Instituto de Matem\'atica e Estat\'\i stica, Universidade de S\~ao Paulo\\ Rua do Mat\~ao 1010, 05508-090 S\~ao Paulo/SP, Brazil.}
\email{fabcm1@gmail.com}

\author{Sinai Robins}
\address{S. Robins, Instituto de Matem\'atica e Estat\'\i stica, Universidade de S\~ao Paulo\\ Rua do Mat\~ao 1010, 05508-090 S\~ao Paulo/SP, Brazil.}
\email{sinai.robins@gmail.com}  

\date{January 2, 2022}

\maketitle

\setcounter{tocdepth}{1}
\tableofcontents
\markboth{F.C. Machado, S. Robins}
{Coefficients of the solid angle and Ehrhart quasi-polynomials}

\section{Introduction}\label{sec:intro}

Given a polytope $P \subset \R^d$, the number of integer points within $P$ can be regarded as a discrete analog of the volume of the body. For a \textbf{rational polytope}, meaning that the vertices of $P$ have rational coordinates, Ehrhart~\cite{ehrhart62} showed that the number of integer points in the integer dilates $tP := \{tx : x \in P\}$ can be written as a \textbf{quasi-polynomial} function of~$t$, that is, as an expression of the form
\begin{equation}\label{eq:ehrhart-quasi}
L_P(t) := |tP \cap \Z^d| = \vol(P) t^d + e_{d-1}(t)t^{d-1} + \dots + e_0(t),
\end{equation}
for $t \in \Z$, $t > 0$.  Here, each \textbf{quasi-coefficient} $e_k(t)$ is a periodic function with period dividing the \textbf{denominator} of $P$, defined to be the smallest integer $m$ such that $mP$ in an integer polytope. The function $L_P(t)$ is called the \textbf{Ehrhart quasi-polynomial} of $P$ (see e.g., Beck and Robins~\cite{beck15}). Quasipolynomial behavior appears in many different contexts, and appears for example in the recent work of Bogart, Goodrick, and Woods~\cite{Bogart17}.

The Ehrhart quasi-polynomial of $P$ is not, however, the only discrete volume that we may define. It has a sister polynomial, which is another measure of discrete volume for polytopes. Namely, each integer point located on the boundary of the polytope is assigned a fractional weight, according to the proportion of the space around that point which the polytope occupies. Indeed, Ehrhart and Macdonald already defined this other discrete volume of $P$, calling it the solid angle sum, and we will adopt their notation, as follows. 

At each point $x \in \R^d$, we define the \textbf{solid angle} with respect to $P$:
\begin{equation}\label{eq:angle-def}
\omega_P(x) := \lim_{\epsilon \to 0^+} \frac{\vol(B(x,\epsilon)\cap P)}{\vol(B(x,\epsilon))},
\end{equation}
where $B(x,\epsilon)$ denotes the ball centered at $x$ with radius~$\epsilon$. 
Similarly to Ehrhart, Macdonald~\cite{mcdonald63,mcdonald71} showed that if $P$ is a rational polytope and $t$ is a positive integer, the sum of these fractionally-weighted integer points inside $tP$ is a quasi-polynomial of $t$. We define the \textbf{solid angle sum}
\begin{equation} \label{SolidAngleSum1}
A_P(t) := \sum_{x \in \Z^d} \omega_{tP}(x) =  \vol(P) t^d + a_{d-1}(t)t^{d-1} + \dots + a_0(t),
\end{equation}
and similarly to \eqref{eq:ehrhart-quasi}, we call $a_k(t)$ the quasi-coefficients of $A_P(t)$.

One of the motivations for studying these coefficients is that they capture geometric information about the polytope. Denote by $\vol^*(F)$ the \textbf{relative volume} of a face $F$, which differs from the usual volume inherited from $\R^d$ by a scaling factor such that the fundamental domain of the lattice of integer points on the linear space parallel to the face has volume~$1$. Assuming that $P$ is full-dimensional, it is an easy fact $e_d$ is the volume of~$P$ and, if we further assume that~$P$ is an integer polytope, then it is also fairly easy to show that $e_{d-1}$ is half the sum of the relative volumes of the facets of $P$, and $e_0 = 1$  (see \cite{beck15}). Analogous ``simple'' geometric interpretations for the other coefficients $e_k$ are not yet known. On the other hand, one strong advantage that the solid angle sum has over the Ehrhart polynomial is that it is a better approximation to the volume of $tP$, in the following sense. For a full-dimensional integer polytope $P \subset \R^d$, restricting attention to integer dilates $t$ gives:
\begin{equation}
A_P(t) := \sum_{x \in \Z^d} \omega_{tP}(x) =  \vol(P) t^d + a_{d-2}t^{d-2} + a_{d-4}t^{d-4} +  \dots,
\end{equation}
a polynomial function of $t$, which is an even polynomial in even dimensions, and an odd polynomial in odd dimensions, and also $a_0 = 0$. This was already proved by Macdonald \cite{mcdonald71}, using the purely combinatorial technique of the M\"obius $\mu$-function of the face poset of $P$. 

In this paper, our main focus is on the coefficients of the solid angle quasi-polynomial, as in equation \eqref{SolidAngleSum1}. 
One strong advantage that these quasi-polynomials have over their Ehrhart quasi-polynomial siblings is that 
the solid angle quasi-polynomials are a {\bf simple valuation} on the polytope algebra. This means that for any  given two rational polytopes $P, Q \subset R^d$ whose interiors are disjoint, we have 
$A_{P \cup Q}(t) = A_{P}(t) + A_{Q}(t)$, hence we never have to compute these valuations over intersections of such polytopes.   However, for the Ehrhart polynomials, we have 
$L_{P \cup Q}(t) = L_{P}(t) + L_{Q}(t) - L_{P \cap Q}(t)$, so that in principle one has to compute these latter valuations over lower-dimensional intersections.

To state the main results of the literature, as well as our results here, we need to use the following definitions and data, associated to any polytope $P$. Given any face $F \subseteq P$, we define the \textbf{affine hull} $\aff(F)$ as the smallest affine space containing~$F$ and $\lin(F)$ as the linear subspace parallel to $\aff(F)$.  We also define the \textbf{cone of feasible directions} of $P$ at $F$ as
\[
\fcone(P,F) := \{\lambda (y-x) : x \in F, y \in P, \lambda \geq 0\}
\]
and, picking any point $x_F$ in the relative interior of the face $F$, we define the \textbf{tangent cone} of $P$ at $F$
\[
 \tcone(P,F) := x_F + \fcone(P,F),
\]
as the cone of feasible directions translated to its original position.

McMullen~\cite{mcmullen78} (see also Barvinok~\cite[Chapter 20]{barvinok08}) proved the existence of functions $\mu$ such that for rational~$P$,
\begin{equation}\label{eq:localP}
|P \cap \Z^d| = \sum_{F \subseteq P}\vol^*(F) \mu(P,F),
\end{equation}
where the sum is taken over all faces of $P$ and $\mu$ depends only on ``local'' geometric data associated to the face $F$, namely the cone $\fcone(P,F)$ and the translation class of $\aff(F)$ modulo $\Z^d$. Since the volume is homogeneous with degree $\dim(F)$, applying \eqref{eq:localP} to $tP$ for integer $t$, we see that this expression implies a formula of the type
\begin{equation}\label{eq:localformula}
e_k(t) = \sum_{\substack{F \subset P\\ \dim(F) = k}} \vol^*(F) \mu(tP,tF).
\end{equation}
Such formula is called a \textbf{local formula} for the quasi-coefficients. Since $\fcone(P,F)$ does not change under dilations and, taking $m$ as the denominator of $P$, $\aff(mF)$ has integer points, we see that indeed $e_k(t) = e_k(t+m)$.

These formulas \eqref{eq:localformula} are not unique. Indeed, when $P$ is an integer polytope, Pommersheim and Thomas~\cite{pommersheim04} constructed infinite classes of such formulas based on an expression for the Todd class of a toric variety; 
For the case that $P$ is a rational polytope, Barvinok~\cite{barvinok06,barvinok08} studied the algorithmic complexity of computing these coefficients, showing that fixing the codimension $\dim(P)-\dim(F)$, $\mu(P,F)$ is indeed computable in polynomial time and Berline and Vergne~\cite{berline07} computed a local formula based on a valuation that associates an analytic function to the tangent cone at each face. Garoufalidis and Pommersheim~\cite{garoufalidis12} showed that there exists such valuation (and hence a local formula) uniquely for each given ``rigid complement map'' of the vector space, which is a systematic way to extend functions initialy defined on subspaces to the entire space. Recently, Ring and Sch\"urmann~\cite{ring18} also produced a method to build local formulas based on the choice of fundamental domains on sublattices.   For simplicity, in this paper we assume 
a fixed inner product on $\R^d$, which we also use to identify the space with its dual, and in this way these complement maps are simply given by orthogonal projection.

A simple way to see that the solid angle sum is indeed a quasi-polynomial and enjoys a lot of the same properties of the Ehrhart function follows by using a simple relation~\cite[Lemma 13.2]{beck15} followed by the Ehrhart reciprocity law~\cite[Theorem 4.1]{beck15}:  
\begin{equation}\label{eq:AP-faces}
A_P(t) = \sum_{F \subseteq P}\omega_P(F)L_{\innt(F)}(t) = \sum_{F \subseteq P}\omega_P(F)(-1)^{\dim(F)}L_{F}(-t),
\end{equation}
where the sum is taken over all faces of $P$ and $\omega_P(F)$ is defined as the solid angle of any point in the relative interior of the face $F$.
But we proceed in the opposite direction: first we give formulations for the solid angle quasi-polynomial using Fourier analytic methods and then show how the Ehrhart coefficients can be recovered from them.

We make one more remark concerning the domain of the dilation parameter.  Linke~\cite{linke11} has shown that the Ehrhart function still preserves its quasi-polynomial structure when considered with positive real dilations instead only integer dilations.  One of her main observations was that for a rational polytope $P$ and $p, q \in \Z_{> 0}$, one may use $L_P(p/q) = L_{\frac{1}{q}P}(p)$ and this relation indeed extends to the quasi-coefficients. Letting $e_k(P; t) := e_{k}(t)$, Linke showed that 
\begin{equation}\label{eq:rational-dilation}
e_k(P; p/q) = e_k\big((1/q)P; p\big)q^k. 
\end{equation}
Assuming further that $P$ is full-dimensional, Linke showed  that the quasi-coefficients $e_{k}(P; t)$ are piecewise polynomials of degree $d-k$ with discontinuities only at rational points, which makes the extension to real dilates straightforward.

Taking this observation into account together with the fact that our methods enable the consideration of real dilations quite naturally, we state our results for all positive real dilations.  We do note, however, that as long as we retrict attention to the class of \emph{all} rational polytopes, the main content of the theorems relies only on the integer dilations due to the reduction~\eqref{eq:rational-dilation} above.   

A subtle but important difference occurs when one fixes a single polytope $P$ and compares its Ehrhart function $L_P(t)$ for integer versus real dilations,  the latter carrying  much more information.  In the case where integer translations $P+w$ are considered, we note that the invariance $L_{P+w}(t) = L_P(t)$ is only guaranteed for integer dilations. Recently, Royer has carried out a detailed and extended study of precisely such an analysis. (Royer~\cite{royer17a,royer17b}).

\bigskip
The paper is organized as follows.
In Section~\ref{sec:main.results} we  state our main results. Section~\ref{sec:pre} contains the statement of two lemmas about lattices and a summary of the main results from Diaz, Le and Robins~\cite{diaz16}. Section~\ref{sec:gl-sums} has a formula for a lattice sum that is very useful in the following section and might be of independent interest as well. Section~\ref{sec:ad2} has a proof of the longest theorem of this paper, a local formula for the quasi-coefficient $a_{d-2}(t)$. Section~\ref{sec:Ehrhart} shows how the formula for the solid angle sum quasi-coefficients can be used to determine the Ehrhart quasi-coefficients and we use this to obtain formulas for $e_{d-1}(t)$ and $e_{d-2}(t)$. Section~\ref{sec:examples} has examples of applications of these formulas to some three dimensional polytopes.   

Finally, in Section~\ref{sec:concrete} we define some  interesting families of polytopes called `concrete polytopes', for which the solid angle sum is trivial, in the sense that   $A_P(t) = \vol(P)t^d$ for all positive integers $t$.  We prove that a sufficient condition for such a phenomenon is that the polytope tiles Euclidean space by the Hyperoctahedral group, together with translations.  It is still an open question to determine necessary and sufficient conditions for the occurrence of concrete polytopes.

\bigskip
\section{Main results}\label{sec:main.results}

Our first main result is a local formula for the codimension two quasi-coefficient $a_{d-2}(t)$ of the solid angle sum $A_P(t)$ of a rational polytope $P$, which is then used to derive a similar formula for the codimension two quasi-coefficient $e_{d-2}(t)$ of the Ehrhart quasi-polynomial $L_P(t)$. We also show how these formulas simplify in the case of an integer polytope and integer dilations. 

The statements require the definition of some parameters to describe the tangent cone of each face, its position with respect to the lattice of integer points, and we use some auxiliary functions, namely the Bernoulli polynomials and Dedekind-Rademachersums. We define them first.

\subsection{Definition of the local parameters and the Bernoulli polynomials}

We begin defining some local parameters at each face of $P$, which appear in the statements of our results. Let $P$ be a $d$-dimensional rational polytope in $\R^d$. For each face $F$ of $P$, let $\Lambda_F$ be the lattice of integer vectors orthogonal to $\lin(F)$,
\[\Lambda_F := \lin(F)^{\perp} \cap \Z^d.\]

\begin{figure}[t]
\tdplotsetmaincoords{88}{11}
\begin{tikzpicture}[tdplot_main_coords]

\coordinate (a) at (1,-7);
\coordinate (b) at (1,4);
\coordinate (c) at (-1,4);
\coordinate (d) at (-1,-7);

\begin{scope}[yshift=2.5cm]
\coordinate (e) at (0,-7);
\coordinate (ef) at (0,2);
\coordinate (f) at (0,4);
\end{scope}
\begin{scope}[yshift=2.9cm]
\coordinate (g) at (1,2);
\end{scope}
\begin{scope}[yshift=2.1cm]
\coordinate (h) at (1,2);
\end{scope}

\coordinate (cd) at (intersection cs:
first line={(c) -- (d)}, second line={(e) -- (a)});
                 
\fill[gray, opacity = .6] (c) -- (d) -- (e) -- (f) -- cycle;
\fill[gray, opacity = .4] (a) -- (b) -- (f) -- (e) -- cycle;

\draw[thick] (e) -- (a) -- (b) -- (f) -- (e) -- (d) -- (cd);
\draw[thick, dashed] (cd) -- (c) -- (f);

\draw[->, thick] (ef) -- (g);
\draw[->, thick] (ef) -- (h);

\node[above right = 3] at (a) {\small $F_1$};
\node[above right = 3] at (d) {\small $F_2$};
\node[right] at (g) {\small $N_{P}(F_1)$};
\node[right] at (h) {\small $-N_{P}(F_2)$};
\end{tikzpicture}
\caption{The normal vectors of two facets, used in the computation of $c_G$.}\label{fig:cG}
\end{figure}

If $F$ is a face of $P$ and $G$ is a facet of~$F$, denote by $N_F(G)$ the unit normal vector in $\lin(F)$ pointing outward to $G$. For a ($d-2$)-dimensional face $G$ of $P$, let $F_1 = F_1(G)$ and $F_2 = F_2(G)$ be the two facets whose intersection defines $G$. The solid angle of $G$, also called the \textbf{dihedral angle} of the edge when $d = 3$, can be computed as the angle between the normal vectors $N_P(F_1)$ and $-N_P(F_2)$ (see Figure~\ref{fig:cG}). We let~$c_G$ denotes the cosine of this angle,
\[c_G := -\innerp{N_P(F_1)}{N_P(F_2)},\]
so that $\omega_P(G) = \arccos(c_G)/(2\pi)$. Let $v_{F_1}$, $v_{F_2}$ be the primitive integer vectors in the directions of $N_{P}(F_1)$ and $N_{P}(F_2)$ and let $v_{F_1, G}$, $v_{F_2, G}$ be the $\Lambda_G^*$-primitive vectors in the directions of $N_{F_1}(G)$ and $N_{F_2}(G)$ ($\Lambda_G^*$ stands for the dual lattice, see definition in Section~\ref{sec:lattices}). Let $\bar{x}_G$ be the projection of $G$ onto $\lin(G)^\perp$,
\[\bar{x}_G := \Proj_{\lin(G)^\perp}(G),\]
and $x_1$, $x_2$ be the coordinates of $\bar{x}_G$ in terms of $v_{F_1, G}$ and $v_{F_2, G}$, 
\[\bar{x}_G = x_1 v_{F_1, G} + x_2 v_{F_2, G}.\]

We can't assume that $v_{F_1,G}$ and $v_{F_2,G}$ form a basis for the lattice $\Lambda_G^*$, however since $v_{F_1,G}$ is a $\Lambda_G^*$-primitive vector, we can set $v_1 := v_{F_1,G}$ and find $v_2 \in \Lambda_G^*$ such that $\{v_1, v_2\}$ is a basis for the lattice $\Lambda_G^*$. Let $h$ and $k$ be the coprime integers such that 
\[v_{F_2, G} = hv_1 + kv_2\]
(they are coprime since $v_{F_2, G}$ is $\Lambda_G^*$-primitive). Substituting $v_2$ by $-v_2$ if necessary, we may assume that $k$ is positive and considering the basis operation $v_2 \mapsto v_2 + av_1$ with $a \in \Z$, we see that we may also choose $v_2$ such that $0 \leq h < k$ (we are essentially using lattice basis reduction, for just dimension $2$).  Adapting an equivalent definition given by Pommersheim~\cite[Section 6]{pommersheim93}, we will say that the cone $\fcone(P,G)$ has \textbf{type} $(h,k)$. 
We defined $h$ and $k$ in terms of the primitive vectors from $\Lambda_G^ *$, however the same values could also have been obtained in terms of a similar relation between the primitive vectors from $\Lambda_G$, see Lemma~\ref{lem:hk}

In order to describe more precisely the building blocks of the quasi-coefficients for both Ehrhart and solid angle polynomials, we consider the usual $r$'th Bernoulli polynomial, defined by the generating function
\begin{equation}\label{eq:Bk-gen}
 \frac{ze^{xz}}{e^z-1} = \sum_{r \geq 0} \frac{B_r(x)}{r!}z^r,
\end{equation}
so that the first couple are given by $B_1(x) = x - 1/2$ and $B_2(x) = x^2 - x + 1/6$.  But here we truncate it, so that it is now supported on the unit interval: $B_r(x) := 0$, for $x \notin [0,1]$.  Now we may define the \textbf{periodized Bernoulli polynomials} as:
\begin{equation}\label{eq:B1-periodized}
 \overline{B}_1(x) := 
  \begin{cases}
    B_1(x -\lfloor x \rfloor) & \quad \text{when } x \notin \Z,\\
    0 & \quad \text{when } x \in \Z,\\
  \end{cases}
\end{equation}
and
\[\overline{B}_r(x) := B_r(x -\lfloor x \rfloor)\]
for all $r > 1$.   

The parameters $h$ and $k$ from $\fcone(P,G)$ play an important role in the following sums. For any $h, k$ coprime positive integers and $x, y \in \R$ the \textbf{Dedekind-Rademacher sum}, introduced by Rademacher~\cite{rademacher64}, is defined as
\begin{equation}\label{eq:dedekind-rademacher}
s(h,k;x,y) := \sum_{r\hspace*{-.2cm} \mod k} \overline{B}_1\lp h \frac{r+y}{k}+x\rp \overline{B}_1\lp\frac{r+y}{k}\rp.
\end{equation}
Note that when $x$ and $y$ are both integers, this sum reduces to the classical \textbf{Dedekind sum} 
\begin{equation*}
s(h,k) := \sum_{r\hspace*{-.2cm} \mod k} \overline{B}_1\lp \frac{rh}{k}\rp \overline{B}_1\lp\frac{r}{k}\rp.
\end{equation*}

\subsection{Statements}

With these local parameters, we obtain the following formula for $a_{d-2}(t)$. We remark that in Theorem \ref{thm:ad2-formula}, each codimension two face $G$ in the summation has its own local geometric data, namely: the type $(h,k)$, and the parameters $x_1, x_2, F_1, F_2 $. 

\begin{restatable}{theorem}{thmadtwoformula}
\label{thm:ad2-formula}
Let $P \subset \R^d$ be a full-dimensional rational polytope. Then for positive real values of $t$, the codimension two quasi-coefficient of the solid angle sum $A_P(t)$ has the following finite form:
\begin{multline*} 
a_{d-2}(t) = \sum_{\substack{G \subset P,\\ \dim G = d-2}} \vol^*(G)
\bigg[
\frac{c_G}{2k}
\bigg(\frac{\|v_{F_2}\|}{\|v_{F_1}\|} \overline{B}_2\big(\innerp{v_{F_1}}{\bar{x}_G} t \big) 
+ \frac{\|v_{F_1}\|}{\|v_{F_2}\|} \overline{B}_2\big(\innerp{v_{F_2}}{\bar{x}_G}t\big)\bigg)\\
+ \lp \omega_P(G) -\frac{1}{4}\rp \bm{1}_{\Lambda_G^*}\lp t\bar{x}_G \rp
- s\big(h,k; (x_1+hx_2)t, -kx_2t\big)
\bigg].
\end{multline*}
\end{restatable}

An important special case of Theorem~\ref{thm:ad2-formula} is the collection of integer polytopes, and the restriction to integer dilations $t$, as follows.

\begin{restatable}{corollary}{corAPtint}
\label{cor:APt-int}
Let $P \subset \R^d$ be a full-dimensional integer polytope. Then for positive integer values of $t$, the codimension two coefficient of the solid angle sum $A_P(t)$ has the following finite form:
\[a_{d-2} = \sum_{\substack{G \subset P,\\ \dim G = d-2}} \vol^*(G)
\bigg[ \frac{c_G}{12k}\lp \frac{\|v_{F_1}\|}{\|v_{F_2}\|} + \frac{\|v_{F_2}\|}{\|v_{F_1}\|} \rp + \omega_P(G) -\frac{1}{4} - s(h,k)\bigg].\]

In particular, for $d = 3$ or $4$, let $P$ be a  full-dimensional integer polytope in $\R^d$. Then for positive integer values of $t$ its solid angle sum is:
\[
 A_P(t) = \vol(P) t^d + \hspace{-.2cm}\sum_{\substack{G \subset P,\\ \dim G = d-2}}\hspace{-.35cm} \vol^*(G)
\bigg[ \frac{c_G}{12k}\lp \frac{\|v_{F_1}\|}{\|v_{F_2}\|} + \frac{\|v_{F_2}\|}{\|v_{F_1}\|} \rp + \omega_P(G) -\frac{1}{4} - s(h,k)\bigg] t^{d-2}.
\]
\end{restatable}

\bigskip
In the last section, we study the question of which rational polytopes $P \subset \R^d$ have the special property that their discrete volumes are equal to their continuous volume. Namely, we would like to know when 
\begin{equation}\label{concrete property}
A_P(t) = \vol (P) t^d,
\end{equation}  
for all integer dilations $t$.  
We exhibit a general family of polytopes that obey such a discrete-continuous property.  In particular, suppose we begin with a rational polytope $P \subset \R^d$, and symmetrize it with respect to the hyperoctahedral group, obtaining an element $Q$ of the polytope group (defined in Section~\ref{sec:concrete}). If $Q$ multi-tiles (see equation \eqref{definition of multitiling}) $\R^d$ by translations, then we prove in Theorem \ref{thm:poly-group} that the original polytope $P$ enjoys property 
\eqref{concrete property}. 
Previously known families of such polytopes arose from tiling (and multi-tiling)~$\R^d$ by translations only.  Here Theorem \ref{thm:poly-group} extends the known families by introducing a non-abelian group. 

Returning to Ehrhart quasi-polynomials, in Section~\ref{sec:Ehrhart} we adapt a technique from Barvinok~\cite{barvinok06} to prove Theorem~\ref{thm:lim-Ap}, showing how the solid angle sum quasi-polynomial of a rational polytope gives the Ehrhart quasi-polynomial, for all positive real $t$. This might seem counter-intuitive at first, because the solid angle sum polynomials are built up from a local metric at each integer point, while the Ehrhart polynomials are purely combinatorial objects. In particular, we obtain the following finite form for the codimension two quasi-coefficient. To state the result, we define the one-sided limits
\begin{equation*}
\overline{B}^+_1(x) := \lim_{\epsilon \to 0^+} \overline{B}_1(x + \epsilon) 
\quad\text{ and }\quad
\overline{B}^-_1(x) := \lim_{\epsilon \to 0^+} \overline{B}_1(x - \epsilon),
\end{equation*}
which differ from $\overline{B}_1(x)$ only at the integers:  $\overline{B}^+_1(n) = - 1/2$ and 
$\overline{B}^-_1(n) = 1/2$ for $n \in \Z$.

\begin{restatable}{theorem}{thmedtwoformula}
\label{thm:ed2-formula}
Let $P \subset \R^d$ be a full-dimensional rational polytope. Then for all positive real values of~$t$, 
the codimension two quasi-coefficient of the Ehrhart function $L_P(t)$ has the following finite form: 
\begin{multline*}
e_{d-2}(t) = \hspace{-.2cm} \sum_{\substack{G \subset P,\\ \dim G = d-2}} \hspace{-.2cm} \vol^*(G) \bigg[\frac{c_G}{2k} \bigg(\frac{\|v_{F_2}\|}{\|v_{F_1}\|} \overline{B}_2\big(\innerp{v_{F_1}}{\bar{x}_G} t \big)
+ \frac{\|v_{F_1}\|}{\|v_{F_2}\|} \overline{B}_2\big(\innerp{v_{F_2}}{\bar{x}_G}t\big)\bigg)\\
-s\big(h,k;(x_1+hx_2)t,-kx_2t\big)
- \frac{1}{2} \bm{1}_{\Z}\lp kx_1t\rp \overline{B}_1\big( (h^{-1}x_1 +x_2)t \big)\\
- \frac{1}{2} \bm{1}_{\Z}(kx_2t)\overline{B}^+_1\big((x_1+hx_2)t\big) 
\bigg],
\end{multline*}
where $h^{-1}$ denotes an integer satisfying $h^{-1}h \equiv 1 \mod k$ if $h \neq 0$ and $h^{-1} := 1$ in case $h = 0$ and $k = 1$.
\end{restatable}

If $P$ is an integer polytope and $t$ is an integer, then 
$\innerp{v_F}{x_F}t \in \Z$, and the formula from Theorem~\ref{thm:ed2-formula} simplifies as follows.

\begin{restatable}{corollary}{coredtwoformula}
\label{cor:ed2-formula}
Let $P \subset \R^d$ be a full-dimensional integer polytope. For positive integer values of $t$, the codimension two coefficient of the Ehrhart polynomial $L_P(t)$ is the following:
\[
e_{d-2} = \sum_{\substack{G \subset P,\\ \dim G = d-2}} \vol^*(G)
\bigg[ \frac{c_G}{12k}\lp \frac{\|v_{F_1}\|}{\|v_{F_2}\|} + \frac{\|v_{F_2}\|}{\|v_{F_1}\|} \rp -s(h,k) + \frac{1}{4}\bigg].
\]
\end{restatable}

We note that it is possible to obtain the latter formulas for the Ehrhart quasi-coefficients using the methods of Berline and Vergne~\cite{berline07} (see \cite[Proposition 31]{berline07} pertaining to a formula corresponding to Theorem~\ref{thm:ed2-formula}, although the correspondence is a nontrivial notational task).

\bigskip
\subsection{Comments about algorithmic aspects}\label{sec:complexity}

In this section we show how to compute the local parameters in the formula from Theorem~\ref{thm:ad2-formula}, provided we are given the hyperplane description of the polytope.  This  formula uses the volumes of the faces of $P$, and we recall that the theoretical complexity of computing volumes of polytopes, from their facet description,  is known to be $\#P$-hard (see~\cite{dyer88}). 
In addition, the formula (Theorem \ref{thm:ad2-formula})  also uses solid-angles, which may be irrational.   We therefore don't make statements about the theoretical complexity of computing with such formula. However we remark that in practice such computations can be approximated (for the solid-angles), especially if the dimension of the polytope is fixed (see~\cite{dyer88}).

For an integer vector $x \in \Z^d$, denote by $\gcd(x)$ the greatest common divisor of its entries. Let the defining inequalities of the two facets $F_1$ and $F_2$ incident to a $(d-2)$-dimensional face $G$ be $\innerp{a_1}{x} \leq b_1$ and $\innerp{a_2}{x} \leq b_2$, with $a_1, a_2 \in \Z^d$ and $b_1, b_2 \in \Z$. Since $a_j$ is an outward-pointing normal vector to $F_j$, we can compute $v_{F_j} = \frac{1}{\gcd(a_j)}a_j $. Hence $c_G :=  -\innerp{N_P(F_1)}{N_P(F_2)} = -\frac{\innerp{v_{F_1}}{v_{F_2}}}{\|v_{F_1}\|\|v_{F_2}\|}$.

Next we show how a lattice basis for $\Lambda_G^*$ can be computed. We observe that by Lemma~\ref{lm:dual-lattice} below, $\Lambda_G^*$ corresponds to the orthogonal projection of $\Z^d$ onto $\lin(G)^\perp$.
Denoting the $d \times 2$ matrix with columns $v_{F_1}$ and $v_{F_2}$ by $U$, we have that $P = U(U^\trans U)^{-1}U^\trans$ is the orthogonal projection onto $\lin(G)^{\perp}$. Indeed, one can check directly that $PU = U$, $P^2 = P$ and $Pv = 0$ for any $v \in \lin(G)$. Therefore the columns of $P$ generate $\Lambda_G^*$. From a set of generating vectors, one can compute a lattice basis by an application of the LLL-algorithm (as described by Buchmann and Pohst~\cite{buchmann89}).

Let $m = 1$ and $j = 2$, or $m = 2$ and $j = 1$.  We now proceed to compute $v_{F_m, G}$, the $\Lambda_G^*$-primitive vector in the direction of $N_{F_m}(G)$. Let 
\begin{equation}\label{eq:fjk}
f_{m, j} := \innerp{v_{F_m}}{v_{F_m}}v_{F_j} - \innerp{v_{F_m}}{v_{F_j}}v_{F_m}.
\end{equation}
It is an integer vector in $\lin(G)^\perp$ orthogonal to $v_{F_m}$ and since $\innerp{f_{m,j}}{v_{F_j}} > 0$ (by Cauchy-Schwarz), it is a vector in the same direction of $N_{F_m}(G)$. Since $f_{m,j} \in \Z^d \cap \lin(G)^\perp \subseteq \Lambda_G^*$, it has integral coordinates in the computed basis for $\Lambda_G^*$. Computing them and dividing by their $\gcd$, we get $v_{F_m, G}$.

Having a lattice basis for $\Lambda_G^*$, its determinant $\det(\Lambda_G^*)$ can be computed directly. Also, using $v_{F_1, G}$ and $v_{F_2, G}$, we can compute $v_2$ such that $v_1 := v_{F_1, G}$ and $v_2$ is a lattice basis. Hence we can also compute $h$ and $k$. To compute $x_1$ and $x_2$, we can use $\bar{x}_G = P x_G$ if we already know a point $x_G \in G$ and then write $\bar{x}_G$ in terms of $v_{F_1, G}$ and $v_{F_2, G}$. More generally, we observe that for any point $x_G \in G$ we must have $\innerp{a_1}{x_G} = b_1$ and $\innerp{a_2}{x_G} = b_2$, so:
\begin{align*}
b_2 &= \innerp{a_2}{\bar{x}_G} = \|a_2\| \innerp{N_P(F_2)}{x_1 v_{F_1, G} + x_2 v_{F_2, G}}\\
&= x_1 \|a_2\| \|v_{F_1, G}\| \innerp{N_P(F_2)}{N_{F_1}(G)} 
= x_1 \|a_2\| \|v_{F_1, G}\| \sqrt{1-c_G^2}\\ 
&= x_1 \|a_2\| |\det(v_{F_1,G},v_{F_2,G})| / \|v_{F_2, G}\| 
= x_1 \|a_2\| k  / \|v_{F_2}\|,
\end{align*}
(see the proof of Lemma~\ref{lem:hk}) thus
\[x_1 = \frac{b_2}{ k \gcd(a_2)},\quad \text{and analogously,}\quad x_2 = \frac{b_1}{ k \gcd(a_1)}.\]

The Dedekind-Rademacher sums can be computed efficiently by proceeding as in the Euclidean algorithm, see Rademacher~\cite{rademacher64}.

\bigskip
\section{Preliminaries}\label{sec:pre}

The current section contains some definitions and background on known results, which will be useful in proving our main results.  

\subsection{Lattices}\label{sec:lattices}

A {\bf $k$-dimensional lattice} $L$ in $\R^d$ is a discrete additive subgroup generated by any $k$ linearly independent vectors in $\R^d$.  Any set of $k$ vectors that generates $L$ is called a {\bf lattice basis}. 
The {\bf determinant} $\det(L)$ of $L$ is the $k$-dimensional volume of any fundamental domain for $L$.  It is easy to compute the volume of $L$ :  If $B \in \R^{d \times k}$ is a matrix whose columns are formed by a lattice basis of $L$,  then it is a standard fact that
\begin{equation} \label{det.sublattice}
\det(L) = \det(B^{\sf T}B)^{1/2}. 
\end{equation}
Due to this relation, we also use the notation $ |\det(B)| := \det(B^{\sf T}B)^{1/2}$.

The {\bf dual lattice} $L^*$ is defined as
\[ L^* := \{y \in \spann(L) : \innerp{x}{y}\in \Z \text{ for all } x \in L\}.\]
We will always use {\bf span($L$)} to mean that we are taking the span over $\mathbb R$, 
so that $\spann(L)$ is always a vector space over $\mathbb R$. Next 
we assume that $L$ is a subset of another lattice $\Lambda \subset \R^d$ and define
\[L^\perp := \{v \in \Lambda^* : \innerp{v}{x} = 0 \text{ for all } x \in L\},\]
note that  $L^\perp \subseteq \Lambda^*$. The lattice $L$ is called a {\bf primitive lattice}, with respect to $\Lambda$, when 
\[\spann(L) \cap \Lambda = L.\]
Next we state two well known facts about lower-dimensional lattices (i.e. when they do not have full rank) and defer the proofs to an appendix.  The first lemma is always used when translating absolute volumes to relative volumes.

\begin{restatable}{lemma}{detLperp}
\label{lm:det-Lperp}
Let $\Lambda \subset \R^d$ be a $d$-dimensional lattice and let $L \subseteq \Lambda$ be a primitive lattice with respect to $\Lambda$. Then 
\[\det(L^\perp) = \frac{ \det(L)}{ \det(\Lambda)}.\]
\end{restatable}

We note that Lemma~\ref{lm:det-Lperp} is non-trivial even in the case that $\Lambda := \Z^{d}$ and $L$ is a $(d-1)$-dimensional sublattice. 

\begin{restatable}{lemma}{duallattice}
\label{lm:dual-lattice}
Let $\Lambda \subset \R^d$ be a $d$-dimensional lattice and $L \subseteq \Lambda$ be a primitive lattice with respect to $\Lambda$. Then \[L^* = \Proj_{\spann(L)}(\Lambda^*).\]
\end{restatable}

\medskip
\subsection{Fourier analysis}

Let $\hat{f}$ and $\Fcal(f)$ denote the \textbf{Fourier transform} of a function $f \colon \R^d \to \C$, which is defined as
\[\hat{f}(\xi) := \Fcal(f)(\xi) := \int_{\R^d} f(u) e^{-2\pi i \innerp{u}{\xi}} 
\diff u.\]
For $x \in \R^d$ let the \textbf{translation by $x$} be $T_x(\xi) := \xi - x$, and the \textbf{convolution} between two functions be $(f \ast g)(x) := \int_{\R^d} f(y)g(x-y) \diff y$.
We recall the following standard identities for the Fourier transform (see e.g.,~\cite[Chapter~I, Theorem 1.4]{stein71}), where $M \in \R^{d \times d}$ is an invertible matrix:
\begin{align}
\Fcal(f \circ T_x)(\xi) &= \hat{f}(\xi)e^{-2\pi i \innerp{x}{\xi}},\label{eq:fourier1}\\
\Fcal(f \ast g)(\xi) &= \hat{f}(\xi) \hat{g}(\xi),\label{eq:fourier2}\\
(\hat{f} \circ M^\trans)(\xi) &= \frac{1}{|\det(M)|}\Fcal(f \circ M^{-1})(\xi).\label{eq:fourier3}
\end{align}

The following theorem, known as the Poisson summation formula, is one of the main tools in Fourier analysis (Stein and Weiss~\cite[Chapter~VII, Corollary~2.6]{stein71}).

\begin{theorem}[Poisson summation]\label{thm:poisson}
Let $f \colon \R^d \to \C$ be a function that enjoys the following two decay conditions and $\Lambda \subset \R^d$ be a $d$-dimensional lattice. Suppose there exist positive constants $\delta$, $C$ such that for all $x \in \R^n$:
 
 (a) \  $|f(x)| < C(1+|x|)^{-n-\delta}$ 
 
 (b) \ $|\hat{f}(x)| < C(1+|x|)^{-n-\delta}$.

Then for any $y \in \R^d$,
\begin{equation}  \label{PoissonSummation}
\det(\Lambda)\sum_{x \in \Lambda} f(x+y) =  \sum_{\xi \in \Lambda^*}\hat{f}(\xi)e^{2\pi i \innerp{y}{\xi}},
\end{equation}
and both sides of  \eqref{PoissonSummation}  converge absolutely.
\end{theorem}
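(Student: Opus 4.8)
The plan is to prove the formula first in the special case $\Lambda = \Z^d$ by the classical periodization-and-Fourier-series argument, and then to deduce the general lattice case by an affine change of variables using the transformation rule \eqref{eq:fourier3}. Write $\Lambda = B\Z^d$, where $B \in \R^{d\times d}$ is the matrix whose columns form a lattice basis of $\Lambda$, so that $\det(\Lambda) = |\det(B)|$ in the sense of \eqref{det.sublattice}, and recall that then $\Lambda^* = (B^\trans)^{-1}\Z^d$. The whole argument hinges on the $\Z^d$ case, so I would treat that carefully and keep the reduction as a bookkeeping step.

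For the $\Z^d$ case, I would define the periodization $F(x) := \sum_{n \in \Z^d} f(x+n)$. Hypothesis (a) (read with $n = d$, so $f(x) = O\big((1+|x|)^{-d-\delta}\big)$) makes this series converge absolutely and uniformly on compact sets, so $F$ is continuous and $\Z^d$-periodic. I would then compute its Fourier coefficients: for $m \in \Z^d$, integrating $F$ against $e^{-2\pi i \innerp{m}{x}}$ over the fundamental cube $[0,1]^d$, interchanging sum and integral by absolute convergence (Fubini), unfolding the integral over all of $\R^d$, and using $e^{-2\pi i \innerp{m}{n}} = 1$ for $m,n \in \Z^d$, yields exactly $\hat f(m)$. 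Hypothesis (b) then gives $\sum_{m \in \Z^d}|\hat f(m)| < \infty$, so the Fourier series $\sum_{m}\hat f(m)\,e^{2\pi i \innerp{m}{x}}$ converges absolutely and uniformly to a continuous function; since it has the same Fourier coefficients as the continuous function $F$, the two agree pointwise. Evaluating at $x = y$ gives $\sum_{n}f(n+y) = \sum_{m}\hat f(m)e^{2\pi i \innerp{y}{m}}$, which is \eqref{PoissonSummation} for $\Lambda = \Z^d$, with both sides absolutely convergent.

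To pass to general $\Lambda$, I would set $g := f \circ B$ and write $y = Bz$. The change of variables $u = Bx$ gives $\hat g(\xi) = \frac{1}{|\det(B)|}\hat f\big((B^\trans)^{-1}\xi\big)$, which is precisely \eqref{eq:fourier3} with $M = B$. Applying the $\Z^d$ case to $g$ at the point $z$, the left side becomes $\sum_{n \in \Z^d} f(Bn + y) = \sum_{x \in \Lambda}f(x+y)$, and on the right I substitute $\xi = (B^\trans)^{-1}m$, so that $m$ ranging over $\Z^d$ corresponds to $\xi$ ranging over $\Lambda^*$, and $\innerp{z}{m} = \innerp{Bz}{\xi} = \innerp{y}{\xi}$. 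This turns the right side into $\frac{1}{|\det(B)|}\sum_{\xi \in \Lambda^*}\hat f(\xi)e^{2\pi i \innerp{y}{\xi}}$; multiplying through by $|\det(B)| = \det(\Lambda)$ gives \eqref{PoissonSummation}.

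The main technical obstacle, and the step I would write most carefully, is the pointwise identification of the continuous periodization $F$ with its Fourier series: this is where hypothesis (b) is indispensable, since it upgrades mere $L^2$-equality of Fourier coefficients to genuine uniform convergence and hence pointwise equality of two continuous functions. A secondary point worth verifying explicitly is that $g = f\circ B$ still satisfies hypotheses (a) and (b), which follows because $B$ is invertible and therefore $|Bx| \geq c|x|$ for some $c > 0$, so the polynomial decay bounds transfer (with altered constants) to both $g$ and $\hat g$; this is what legitimizes applying the $\Z^d$ case to $g$ in the reduction.
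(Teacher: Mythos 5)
The paper does not prove this theorem at all: it is quoted as a known result from Stein and Weiss~\cite[Chapter~VII, Corollary~2.6]{stein71}, so there is no internal proof to compare against. Your argument is correct and complete --- it is the classical periodization-and-Fourier-series proof of the $\Z^d$ case followed by the change-of-variables reduction to a general lattice via \eqref{eq:fourier3}, which is in substance the same route as the cited reference; you also correctly read the paper's typo ($n$ should be $d$ in the decay hypotheses) and verify that hypotheses (a) and (b) transfer to $f \circ B$, the step most often glossed over in this reduction.
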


A function that will play a special role in this work is the \textbf{Gaussian function}, that for $\epsilon > 0$ is 
\[
\phi_{d, \epsilon}(x) := \epsilon^{-d/2}e^{-\pi\|x\|^2/\epsilon}.
\]
Its Fourier transform is (see e.g.,~\cite[Chapter I, Theorem 1.13]{stein71}) 
\[
\hat{\phi}_{d,\epsilon}(\xi) = e^{-\epsilon\pi\|\xi\|^2}.
\]
Note that $\hat{\phi}_{d,\epsilon}$ doesn't explicitly depend on $d$ (except for the $2$-norm in $\mathbb R^d$) so we also denote it by $\hat{\phi}_\epsilon$.

\subsection{Fourier transforms of polytopes and solid angle sums}\label{sec:summary-diaz16}
In this section we present a summary of the main results from Diaz, Le, and Robins~\cite{diaz16}. 

A solid angle at any point $x \in P$, which is also the volume of a local spherical polytope, has an analytical representation that is convenient for our purposes.   To  introduce it, let $\bm{1}_P$ denote the indicator function of $P$, that is $\bm{1}_P(x) = 1$ if $x \in P$ and $\bm{1}_P(x) = 0$ if $x \notin P$. The solid angle can be computed as the limit of the convolution between the Gaussian $\phi_{d, \epsilon}$ and the indicator function of $P$ (cf. Diaz, Le and Robins~\cite[Lemma 1]{diaz16}):
\begin{equation}\label{eq:conv-solid-angle}
\omega_P(x) = \lim_{\epsilon \to 0^+} \int_P \phi_{d,\epsilon}(t-x) \diff t = \lim_{\epsilon \to 0^+}(\bm{1}_P \ast \phi_{d,\epsilon})(x).
\end{equation}
More generally, if we replace $\bm{1}_P$ by a continuous function supported on $P$, we have the following (cf. Diaz, Le and Robins~\cite[Lemma~3 and Theorem~5]{diaz16}):

\begin{lemma}\label{thm:conv-series}
Let $P$ be a full-dimensional polytope in $\R^d$ and $f$ be a continuous function on $P$ and zero outside $P$. Then for all $x \in \R^d$,
\[ \lim_{\epsilon \to 0^+}(f \ast \phi_{d,\epsilon})(x) = f(x) \omega_P(x).\]
Moreover,
\[\sum_{x \in \Z^d} f(x) \omega_P(x) = \lim_{\epsilon \to 0^+}\sum_{x \in \Z^d}(f \ast \phi_{d,\epsilon})(x).\]
\end{lemma}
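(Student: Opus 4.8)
The plan is to prove the pointwise identity first and then bootstrap it to the summation statement. Throughout, write $g_\epsilon := f \ast \phi_{d,\epsilon}$, and note that $\|f\|_\infty < \infty$ since $f$ is continuous on the compact set $\overline{P}$ and vanishes elsewhere. For the pointwise statement I would compare $g_\epsilon(x)$ with $f(x)(\bm{1}_P \ast \phi_{d,\epsilon})(x)$, whose limit is $f(x)\omega_P(x)$ by \eqref{eq:conv-solid-angle}. Since both convolutions integrate only over $\overline{P}$ (the boundary has measure zero), their difference is
\[
D_\epsilon(x) = \int_{\overline{P}} \big(f(y) - f(x)\big)\,\phi_{d,\epsilon}(x - y)\,\diff y,
\]
and it suffices to show $D_\epsilon(x) \to 0$. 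When $x \in \overline{P}$ this is the standard approximate-identity argument: given $\eta > 0$, continuity of $f$ at $x$ bounds the integrand by $\eta$ on a ball $\|y - x\| < \delta$ (contributing at most $\eta$, as $\phi_{d,\epsilon}$ has unit mass), while the contribution of $\|y - x\| \ge \delta$ is at most $2\|f\|_\infty \int_{\|u\| \ge \delta}\phi_{d,\epsilon}(u)\,\diff u$, which tends to $0$ because the Gaussian mass escapes to the origin as $\epsilon \to 0^+$. When $x \notin \overline{P}$ we have $f(x) = \omega_P(x) = 0$, and $\mathrm{dist}(x, \overline{P}) > 0$ forces $\phi_{d,\epsilon}(x - y) \to 0$ uniformly for $y \in \overline{P}$, so $g_\epsilon(x) \to 0$ directly.

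For the summation statement, first observe that the right-hand sum $\sum_{x} f(x)\omega_P(x)$ is finite: its terms vanish unless $x \in \overline{P} \cap \Z^d$, a finite set. The heart of the matter is to interchange $\lim_{\epsilon}$ with the infinite sum $\sum_{x \in \Z^d} g_\epsilon(x)$. I would split $\Z^d$ into $A := \{x \in \Z^d : \|x\| \le L\}$ and its complement, with $L$ large. On the finite set $A$, termwise convergence from the pointwise statement gives $\sum_{x \in A} g_\epsilon(x) \to \sum_{x \in A} f(x)\omega_P(x)$, and for $L$ large enough $A \supseteq \overline{P}\cap\Z^d$, so the latter already equals the full right-hand side. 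It then remains to make the tail $\sum_{x \notin A} g_\epsilon(x)$ uniformly small in $\epsilon$. Using $|g_\epsilon(x)| \le \|f\|_\infty \,\vol(\overline{P})\,\epsilon^{-d/2}e^{-\pi\,\mathrm{dist}(x,\overline{P})^2/\epsilon}$ together with the elementary fact that, once $\mathrm{dist}(x,\overline{P}) \ge \sqrt{d/(2\pi)}$, the map $\epsilon \mapsto \epsilon^{-d/2}e^{-\pi \delta^2/\epsilon}$ is increasing on $(0,1]$ and hence maximized at $\epsilon = 1$, one obtains the $\epsilon$-free dominating bound $|g_\epsilon(x)| \le \|f\|_\infty\,\vol(\overline{P})\,e^{-\pi\,\mathrm{dist}(x,\overline{P})^2}$ for all $\epsilon \in (0,1]$ and all such $x$. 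Since this bound is summable over $\Z^d$, choosing $L$ large makes $\sum_{x \notin A}|g_\epsilon(x)| < \eta$ uniformly in $\epsilon \in (0,1]$, and the standard $\eta$-argument yields the claim.

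The \emph{main obstacle} is precisely this uniform-in-$\epsilon$ tail control: because the normalization $\epsilon^{-d/2}$ blows up as $\epsilon \to 0^+$, one cannot dominate $g_\epsilon(x)$ by a naive $\epsilon$-independent Gaussian without first locating the supremum over $\epsilon$, which is why the clean bound is available only for integer points at least a fixed distance (depending only on $d$) from $\overline{P}$; the finitely many nearby points must be absorbed into the finite block $A$. An appealing alternative for the summation statement is to invoke Poisson summation \eqref{PoissonSummation}: since $\hat{g}_\epsilon = \hat{f}\,\hat{\phi}_\epsilon$ by \eqref{eq:fourier2} and $\hat{\phi}_\epsilon(\xi) = e^{-\epsilon\pi\|\xi\|^2}$ decays rapidly, one obtains $\sum_{x\in\Z^d} g_\epsilon(x) = \sum_{\xi\in\Z^d}\hat{f}(\xi)\,e^{-\epsilon\pi\|\xi\|^2}$, tying the statement to the Fourier-analytic results of Diaz, Le and Robins~\cite{diaz16}. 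However, this route merely relocates the convergence question to the dual side, where one must understand the Gaussian-regularized limit $\lim_{\epsilon\to0^+}\sum_{\xi}\hat{f}(\xi)e^{-\epsilon\pi\|\xi\|^2}$ and hence the decay of $\hat{f}$; the direct split above is more self-contained and I would therefore favor it.
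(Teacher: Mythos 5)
The paper does not actually prove this lemma: it is imported wholesale from Diaz, Le and Robins~\cite[Lemma~3 and Theorem~5]{diaz16}, so there is no internal proof to compare against. Judged on its own, your argument is correct and self-contained. The pointwise claim is handled properly: since $f$ vanishes off $P$, the convolution integral is supported on $\overline{P}$, so only the continuity of $f|_{\overline{P}}$ at $x$ is needed (worth saying explicitly, since $f$ need not be continuous as a function on $\R^d$ at boundary points), and comparing against $f(x)(\bm{1}_P\ast\phi_{d,\epsilon})(x)$ together with \eqref{eq:conv-solid-angle} is a legitimate reduction because that identity is stated earlier as known. For the interchange of limit and sum, you correctly isolate the real difficulty --- the factor $\epsilon^{-d/2}$ prevents a naive $\epsilon$-free Gaussian domination --- and your resolution is sound: the function $\epsilon\mapsto\epsilon^{-d/2}e^{-\pi\delta^2/\epsilon}$ is indeed increasing on $(0,2\pi\delta^2/d)$, so for lattice points with $\mathrm{dist}(x,\overline{P})\ge\sqrt{d/(2\pi)}$ one gets the summable, $\epsilon$-uniform bound $\|f\|_\infty\vol(\overline{P})e^{-\pi\,\mathrm{dist}(x,\overline{P})^2}$ on $(0,1]$, and the finitely many remaining points are absorbed into the finite block where termwise convergence applies. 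This is the standard approximate-identity-plus-dominated-tail route one would expect the cited reference to follow, and your version of it is complete.
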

Note that the left-hand side of the above identity is a finite sum since $P$ is compact and $\omega_P(x) = 0$ for $x \notin P$, while the right-hand side is the limit of an infinite series.

The method from Diaz, Le, and Robins consists of two steps: First, the solid angles are written with convolutions and the solid angle sum is represented with the series from Lemma~\ref{thm:conv-series}, next the Poisson summation formula is applied to represent $A_P(t)$ as a series with the Fourier transform of~$P$, leading to (cf. Diaz, Le, and Robins~\cite[Lemma 2]{diaz16}):

\begin{lemma}\label{lm:APt-hat1P}
Let $P$ be a full-dimensional polytope $P$ in $\R^d$ and $t$ any positive real number. Then the solid angle sum of $P$ can be written as follows:
\[A_P(t) = t^d \lim_{\epsilon \to 0^+} \sum_{\xi \in \Z^d} \hat{\bm{1}}_P(t\xi)e^{-\pi\epsilon\|\xi\|^2}.\]
\end{lemma}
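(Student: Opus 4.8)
Lemma \ref{lm:APt-hat1P} — the proposal.

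The plan is to combine the two representations supplied in this section, namely the convolution formula \eqref{eq:conv-solid-angle} together with Lemma~\ref{thm:conv-series}, and the Poisson summation formula (Theorem~\ref{thm:poisson}), and then to track the dilation parameter $t$ through a change of variables. First I would start from the definition \eqref{SolidAngleSum1} of the solid angle sum applied to the dilated polytope $tP$, writing $A_P(t) = \sum_{x \in \Z^d} \omega_{tP}(x)$. Taking $f = \bm{1}_{tP}$ in Lemma~\ref{thm:conv-series} (its indicator function is continuous on the interior and the boundary contributes measure zero, so the lemma applies), I would rewrite this as
\[
A_P(t) = \lim_{\epsilon \to 0^+} \sum_{x \in \Z^d} (\bm{1}_{tP} \ast \phi_{d,\epsilon})(x).
\]
This converts the finite combinatorial sum of solid angles into the limit of an infinite lattice sum of a smooth, rapidly decaying function, which is exactly the shape needed to invoke Poisson summation.

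Next I would apply the Poisson summation formula \eqref{PoissonSummation} with $\Lambda = \Z^d$ (so $\det(\Lambda) = 1$ and $\Lambda^* = \Z^d$) and $y = 0$ to the integrand $\bm{1}_{tP} \ast \phi_{d,\epsilon}$, which is a Schwartz-class function and hence satisfies the decay hypotheses (a) and (b). This yields
\[
\sum_{x \in \Z^d} (\bm{1}_{tP} \ast \phi_{d,\epsilon})(x) = \sum_{\xi \in \Z^d} \Fcal(\bm{1}_{tP} \ast \phi_{d,\epsilon})(\xi).
\]
Using the convolution identity \eqref{eq:fourier2} together with the Gaussian transform $\hat{\phi}_{d,\epsilon}(\xi) = e^{-\epsilon \pi \|\xi\|^2}$, the right-hand side becomes $\sum_{\xi \in \Z^d} \hat{\bm{1}}_{tP}(\xi)\, e^{-\epsilon \pi \|\xi\|^2}$. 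Finally I would relate $\hat{\bm{1}}_{tP}$ to $\hat{\bm{1}}_P$: since $\bm{1}_{tP}(u) = \bm{1}_P(u/t)$, the scaling identity \eqref{eq:fourier3} (with $M = tI$, so $|\det M| = t^d$) gives $\hat{\bm{1}}_{tP}(\xi) = t^d\, \hat{\bm{1}}_P(t\xi)$. Substituting this and pushing the limit back through gives
\[
A_P(t) = t^d \lim_{\epsilon \to 0^+} \sum_{\xi \in \Z^d} \hat{\bm{1}}_P(t\xi)\, e^{-\pi \epsilon \|\xi\|^2},
\]
as claimed.

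I expect the main obstacle to be the justification of interchanging the limit $\epsilon \to 0^+$ with the summation and the Poisson step, rather than the algebra of the Fourier identities. The function $\bm{1}_{tP}$ is not itself continuous, so one must be careful that Lemma~\ref{thm:conv-series} is being applied to a legitimate $f$; the cleanest route is to note that the boundary of $tP$ has measure zero, so that $\bm{1}_{tP}$ agrees almost everywhere with a function to which the lemma applies, and the solid-angle weighting at lattice points on $\partial(tP)$ is precisely what \eqref{eq:conv-solid-angle} produces in the limit. The convergence of the Fourier side is controlled by the Gaussian factor $e^{-\epsilon\pi\|\xi\|^2}$, which guarantees absolute convergence for each fixed $\epsilon > 0$ and thereby legitimizes every manipulation before the limit is taken; the limit itself is then handled termwise using the pointwise decay of $\hat{\bm{1}}_P$. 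Since all of these analytic facts are already packaged in Lemma~\ref{thm:conv-series} and Theorem~\ref{thm:poisson}, the argument reduces to assembling them in the correct order and performing the two changes of variable.
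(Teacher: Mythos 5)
Your proposal is correct and follows exactly the route the paper indicates for this lemma (which it attributes to Diaz, Le, and Robins): represent the solid angle sum via Lemma~\ref{thm:conv-series} applied to $f=\bm{1}_{tP}$, apply Poisson summation to the Gaussian-mollified indicator for each fixed $\epsilon>0$, and use the scaling identity $\hat{\bm{1}}_{tP}(\xi)=t^d\hat{\bm{1}}_P(t\xi)$. Note only that your worry about continuity is unnecessary: Lemma~\ref{thm:conv-series} requires continuity of $f$ \emph{on} $P$ (not on all of $\R^d$), so the indicator qualifies directly, and since the limit in $\epsilon$ remains outside the sum in the final statement, no interchange of limit and summation is needed at the end.
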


Through successive applications of Stokes formula (in the frequency space of Poisson summation), the Fourier transform of $P$ is then written as a sum over the faces of $P$~\cite[Theorem 1]{diaz16}. By treating these terms carefully, keeping track of 'generic' and 'nongeneric' frequency vectors on the right-hand-side of Poissson summation, one can find local formulas for the coefficients $a_{d-k}(t)$.

If $F$ is a face of $P$, let $\Proj_F$ be the orthogonal projection onto $\lin(F)$. If $G$ is a facet of~$F$, denote by $N_F(G)$ the unit normal vector in $\lin(F)$ pointing outward to~$G$ and define the weight on the pair $(F,G)$:
\[W_{(F,G)}(\xi) := \frac{-1}{2\pi i}\frac{\innerp{\Proj_F(\xi)}{N_F(G)}}{\| \Proj_F(\xi)\|^2}.\]

The \textbf{face poset} $G_P$ of $P$ consists of all faces of $P$ ordered by inclusion and a \textbf{chain} $T$ of \textbf{length} $l(T) = k$ is a sequence of faces $T = (F_0 \to F_1 \to F_2 \to \dots \to F_k)$ with $F_0 = P$ and $F_j$ a facet of $F_{j-1}$ for every $j$. 

The \textbf{admissible set} $S(T)$ of a chain is the set of all vectors orthogonal to $\lin(F_k)$ but not to $\lin(F_{k-1})$. For a point $\xi \in S(T)$, the \textbf{rational weight} $\Rcal_T(\xi)$ is the product
\begin{equation}\label{eq:rational-weight}
\Rcal_T(\xi) := \vol(F_k) \prod_{j=1}^k W_{(F_{j-1}, F_j)}(\xi),
\end{equation}
where the volume of $F_k$ is the $(d-k)$-dimensional volume and the \textbf{exponential weight}~is
\begin{equation}\label{eq:exponential-weight}
\Ecal_T(\xi) := e^{-2\pi i\innerp{\xi}{x_{F_k}}},
\end{equation}
where $x_{F_k}$ is any point from $F_k$, the last face from chain $T$. Note that since $\xi \in S(T)$, the value of $\innerp{\xi}{x_{F_k}}$ does not depend on the choice of $x_{F_k}$.

The next result from Diaz, Le, and Robins gives a formula for~$a_k(t)$, for any positive real $t$:
\begin{theorem}   \cite[Theorem 2]{diaz16}     \label{thm:ak-formula}
Let $P$ be a full-dimensional rational polytope in $\R^d$, and $t$ be a positive real number. Then we have $A_P(t) = \sum_{k = 0}^d a_k(t)t^k$, where, for $0 \leq k \leq d$,
\begin{equation*}
a_k(t) = \lim_{\epsilon \to 0^+} \sum_{T :\, l(T) = d-k}\, \sum_{\xi \in \Z^d \cap S(T)} \Rcal_T(\xi) \Ecal_T(t\xi) \hat{\phi}_\epsilon(\xi).
\end{equation*}
\end{theorem}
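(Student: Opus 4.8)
The plan is to take Lemma~\ref{lm:APt-hat1P} as the starting point, so that
\[
A_P(t) = t^d\lim_{\epsilon\to0^+}\sum_{\xi\in\Z^d}\hat{\bm{1}}_P(t\xi)\,\hat{\phi}_\epsilon(\xi),
\]
and reduce the entire theorem to an explicit combinatorial expansion of $\hat{\bm{1}}_P(t\xi)=\int_P e^{-2\pi i\innerp{x}{t\xi}}\diff x$ indexed by chains in the face poset. First I would fix a single nonzero frequency $\xi$ and expand this integral by iterating the divergence theorem, peeling off one dimension at each step, and only afterward reassemble the regularized lattice sum.

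The engine of the expansion is the elementary identity that, for a face $F$ and a vector $\eta$ with $\Proj_F(\eta)\neq0$, the integrand $e^{-2\pi i\innerp{x}{\eta}}$ is, within $\lin(F)$, the divergence of the field $x\mapsto \tfrac{-\Proj_F(\eta)}{2\pi i\|\Proj_F(\eta)\|^2}e^{-2\pi i\innerp{x}{\eta}}$. Applying this on $F_0=P$ with $\eta=t\xi$ rewrites the integral as a sum over facets $F_1$, each term carrying the weight $W_{(P,F_1)}(t\xi)$ and an integral of the same shape over $F_1$; on $\aff(F_1)$ the component of $t\xi$ orthogonal to $\lin(F_1)$ factors out as a constant phase, so that only $\Proj_{F_1}(t\xi)$ governs the next step. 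I would iterate this recursion, descending from $F_{j-1}$ to its facets exactly when $\Proj_{F_{j-1}}(t\xi)\neq0$, and terminating a branch at a face $F_k$ as soon as $t\xi\perp\lin(F_k)$, at which point the remaining integral is simply $\vol(F_k)\,e^{-2\pi i\innerp{x_{F_k}}{t\xi}}$ (well defined independently of $x_{F_k}$ precisely because $t\xi\perp\lin(F_k)$). The condition ``$\xi\perp\lin(F_k)$ but $\xi\not\perp\lin(F_{k-1})$'' marking a terminal branch is exactly membership of $\xi$ in the admissible set $S(T)$, the accumulated product of facet weights is $\Rcal_T(t\xi)$, and the terminal phase is $\Ecal_T(t\xi)$. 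Since each $W_{(F,G)}$ is homogeneous of degree $-1$, one gets $\Rcal_T(t\xi)=t^{-l(T)}\Rcal_T(\xi)$, and a chain of length $l(T)$ terminates at a face of dimension $d-l(T)$; the trivial chain $T=(P)$ accounts for the $\xi=0$ term $\vol(P)$.

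With the pointwise identity $\hat{\bm{1}}_P(t\xi)=\sum_{T:\,\xi\in S(T)} t^{-l(T)}\Rcal_T(\xi)\Ecal_T(t\xi)$ in hand, I would substitute it into the regularized sum. For each fixed $\epsilon>0$ the double sum over $\xi$ and over the finitely many chains is absolutely convergent, since the weights $\Rcal_T$ are bounded on $S(T)$ — the relevant projections $\Proj_{F_{j-1}}(\xi)$ of integer vectors are bounded away from zero on the admissible set, because $P$ is rational — while $\hat{\phi}_\epsilon(\xi)=e^{-\pi\epsilon\|\xi\|^2}$ decays super-polynomially. Fubini then permits regrouping the sum by chains; collecting chains of common length $l(T)=d-k$ produces the factor $t^{d-l(T)}=t^k$, and because there are only finitely many values of $k$ I may pull the limit $\epsilon\to0^+$ through the outer finite sum, yielding $A_P(t)=\sum_{k=0}^d a_k(t)t^k$ with $a_k(t)$ exactly as stated.

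I expect the main obstacle to be twofold. The delicate geometric bookkeeping is the treatment of the singular locus $\Proj_F(t\xi)=0$: the divergence step is valid only off this locus, and it is the careful accounting of which branches terminate there that forces the admissible-set decomposition and guarantees that each frequency is attached to the correct chains (a given $\xi$ contributing to every $T$ with $\xi\in S(T)$). The analytic obstacle is that the unregularized series $\sum_{\xi}\hat{\bm{1}}_P(t\xi)$ is only conditionally summable, the Fourier transform of a polytope decaying merely like $\|\xi\|^{-1}$ in generic directions, so the Gaussian factor cannot be discarded before regrouping; one must verify that the chain-by-chain limits as $\epsilon\to0^+$ genuinely exist, which is where the explicit lattice-sum evaluations developed in Section~\ref{sec:gl-sums} and the later sections become essential.
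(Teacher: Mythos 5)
Your plan follows the same route that the paper attributes to Diaz, Le, and Robins (the theorem is imported by citation, not proved here; Section~\ref{sec:summary-diaz16} summarizes the method): start from Lemma~\ref{lm:APt-hat1P}, expand $\hat{\bm{1}}_P(t\xi)$ by the iterated divergence theorem (the ``combinatorial Stokes formula'') into chains with admissible sets, use the degree $-1$ homogeneity of the weights $W_{(F,G)}$ to extract $t^{d-l(T)}$, and regroup by chain length. The pointwise chain expansion, the Fubini step for fixed $\epsilon$ (your boundedness argument for $\Rcal_T$ on $\Z^d\cap S(T)$, via discreteness of the projection of $\Z^d$ onto rational subspaces, is exactly right and is precisely where rationality of $P$ enters), and the bookkeeping of which frequencies terminate at which faces are all correct.

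The one genuine gap is the final step. Writing $S_\epsilon(t) := t^d\sum_{\xi\in\Z^d}\hat{\bm{1}}_P(t\xi)\hat{\phi}_\epsilon(\xi) = \sum_{k=0}^d t^k g_{k,\epsilon}(t)$, where $g_{k,\epsilon}(t)$ is the group of chains of length $d-k$, you cannot ``pull the limit $\epsilon\to0^+$ through the outer finite sum'' unless you already know that each $\lim_{\epsilon\to0^+}g_{k,\epsilon}(t)$ exists; you acknowledge this, but the repair you point to --- the explicit lattice-sum evaluations of Section~\ref{sec:gl-sums} --- does not suffice in general. Theorems~\ref{thm:lattice-sum} and~\ref{cor:lattice-sum2} handle sums whose denominators are products of powers of linear forms, whereas $\Rcal_T(\xi)$ contains the factors $\|\Proj_{F_{j-1}}(\xi)\|^{-2}$, which on $S(T)$ are genuinely quadratic once $l(T)\geq 2$; the reduction to products of linear forms carried out in Section~\ref{sec:ad2} is special to codimension two, and even there it requires combining the two chains that share a terminal face and splitting off the sublattice sums. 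A clean fix using only the paper's toolkit: each $g_{k,\epsilon}$ is periodic in $t$ with period the denominator $m$ of $P$, because for $\xi\in\Z^d\cap S(T)$ one has $\Ecal_T\big((t+m)\xi\big)=\Ecal_T(t\xi)\,e^{-2\pi i\innerp{\xi}{mx_{F_k}}}=\Ecal_T(t\xi)$, the face $mF_k$ having integer points. Hence evaluating the identity at $t, t+m, \dots, t+dm$ and inverting the Vandermonde matrix $\big((t+jm)^k\big)_{j,k=0}^d$ expresses $g_{k,\epsilon}(t)$ as a fixed (independent of $\epsilon$) linear combination of $S_\epsilon(t), \dots, S_\epsilon(t+dm)$, each of which converges as $\epsilon\to0^+$ by Lemma~\ref{lm:APt-hat1P}. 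This is exactly the interpolation trick the paper itself uses in Theorem~\ref{thm:lim-Ap} and Lemma~\ref{lem:unshift}, and it closes your argument for all $k$.
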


Using this theorem one can get more explicit formulas for the coefficients, although their complexity increases with the length of the chains considered. 
For the quasi-coefficient $a_{d-1}(t)$, we have the following known formula, given in terms of the facets of $P$ and the periodized Bernoulli polynomial $\overline{B}_1$:

\begin{theorem}   \cite[Theorem 3]{diaz16}  \label{thm:ad1-formula}
Let $P$ be a full-dimensional rational polytope. Then the codimension one quasi-coefficient of the solid angle sum $A_P(t)$ has the following local formula for all positive real values of $t$:
\[ a_{d-1}(t) = -\hspace{-.3cm}\sum_{\substack{F \subset P,\\ \dim(F) = d-1}}\hspace{-.3cm} \vol^*(F)\overline{B}_1\big(\innerp{v_F}{x_F}t\big),\]
where $x_F$ is any point in $F$ and $v_F$ is the primitive integer vector in the direction of $N_P(F)$.
\end{theorem}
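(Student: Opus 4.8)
The plan is to derive Theorem~\ref{thm:ad1-formula} directly from the master formula in Theorem~\ref{thm:ak-formula}, specialized to $k = d-1$. Setting $k = d-1$ forces the chains $T$ to have length $l(T) = d - k = 1$, so each such chain is simply $T = (P \to F)$ where $F$ is a facet of $P$. This is the crucial simplification: the sum over chains collapses to a sum over facets, and each rational weight $\Rcal_T$ is a single factor rather than a product. First I would write out $\Rcal_T(\xi) = \vol(F) W_{(P,F)}(\xi)$ and $\Ecal_T(t\xi) = e^{-2\pi i \innerp{t\xi}{x_F}}$ explicitly, so that the contribution of each facet $F$ becomes
\[
\lim_{\epsilon \to 0^+} \sum_{\xi \in \Z^d \cap S(T)} \vol(F)\, \frac{-1}{2\pi i}\frac{\innerp{\Proj_P(\xi)}{N_P(F)}}{\|\Proj_P(\xi)\|^2}\, e^{-2\pi i t\innerp{\xi}{x_F}} \hat\phi_\epsilon(\xi).
\]
Since $P$ is full-dimensional, $\Proj_P$ is the identity, so $\Proj_P(\xi) = \xi$.

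Next I would identify the admissible set $S(T)$ for this length-one chain. By definition $S(T)$ consists of vectors orthogonal to $\lin(F)$ but not to $\lin(P) = \R^d$; the latter condition merely excludes $\xi = 0$. Thus the integer vectors in $S(T)$ are exactly the nonzero elements of $\lin(F)^\perp \cap \Z^d = \Lambda_F$, which is a rank-one lattice generated by the primitive integer normal $v_F$. Writing $\xi = n v_F$ for $n \in \Z \setminus \{0\}$, I would substitute $\innerp{\xi}{N_P(F)} = n\|v_F\|$, $\|\xi\|^2 = n^2\|v_F\|^2$, and $\innerp{\xi}{x_F} = n\innerp{v_F}{x_F}$. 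The geometric weight then telescopes to $\frac{-1}{2\pi i}\cdot\frac{1}{n\|v_F\|}$, converting the facet's contribution into a one-dimensional Fourier-type series
\[
\frac{\vol(F)}{\|v_F\|}\cdot\frac{-1}{2\pi i}\lim_{\epsilon\to 0^+}\sum_{n \neq 0} \frac{1}{n}\, e^{-2\pi i n\innerp{v_F}{x_F}t}\, e^{-\epsilon\pi n^2\|v_F\|^2}.
\]

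The heart of the argument is then recognizing this regularized series as a periodized Bernoulli polynomial. The classical Fourier expansion $\overline{B}_1(y) = -\sum_{n\neq 0}\frac{1}{2\pi i n}e^{2\pi i n y}$ (valid as an Abel/Gaussian-regularized limit, with the value $0$ at integers matching the convention in~\eqref{eq:B1-periodized}) identifies the bracketed limit, with $y = -\innerp{v_F}{x_F}t$, as $\overline{B}_1(-\innerp{v_F}{x_F}t) = -\overline{B}_1(\innerp{v_F}{x_F}t)$ by the oddness of $\overline{B}_1$. Finally I would convert the absolute volume to the relative volume using Lemma~\ref{lm:det-Lperp}: since $F$ lies in the affine hull of a primitive sublattice with $\Lambda_F = \lin(F)^\perp \cap \Z^d$ one-dimensional of determinant $\|v_F\|$, one has $\vol^*(F) = \vol(F)/\det(\lin(F)\cap\Z^d)$ and the reciprocal determinant relation supplies exactly the factor $\|v_F\|$ needed so that $\vol(F)/\|v_F\|$ collapses to $\vol^*(F)$ (up to the sign already tracked). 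Summing over all facets yields the stated formula. \textbf{The main obstacle} I anticipate is the careful justification of the regularized Fourier series: one must verify that the Gaussian damping $\hat\phi_\epsilon(\xi) = e^{-\epsilon\pi\|\xi\|^2}$ produces precisely the Abel-type summation that converges to $\overline{B}_1$ and, in particular, yields the value $0$ rather than $\pm 1/2$ when $\innerp{v_F}{x_F}t \in \Z$ — this symmetric limit is exactly why the $\epsilon$-regularization (as opposed to one-sided truncation) selects the midpoint value built into definition~\eqref{eq:B1-periodized}, and it is the place where the sign and boundary conventions must be reconciled with scrupulous care.
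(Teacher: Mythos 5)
Your derivation is correct and follows exactly the route the paper takes: the paper itself only cites this statement from Diaz--Le--Robins, but your specialization of Theorem~\ref{thm:ak-formula} to length-one chains, the parametrization $\xi = n v_F$ of $\Lambda_F\setminus\{0\}$, the identification of the Gaussian-regularized series with $\overline{B}_1$ (which is precisely the $e=(1)$ case of Theorem~\ref{thm:lattice-sum}, where the two boundary points with solid angle $\tfrac12$ cancel to give the value $0$ at integers), and the conversion $\vol(F)/\|v_F\| = \vol^*(F)$ via Lemma~\ref{lm:det-Lperp} are all the same steps the paper carries out in its proof of the codimension-two analogue, Theorem~\ref{thm:ad2-formula}. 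The one obstacle you flag --- justifying that the Gaussian damping selects the midpoint value at jump discontinuities --- is exactly the point the paper addresses in Section~\ref{sec:gl-sums}, so nothing is missing.
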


\bigskip
\section{Lattice sums}\label{sec:gl-sums}

Let $\Lambda$ be a $k$-dimensional lattice in $\R^d$, $w_1, \dots, w_k$ be linearly independent vectors from $\Lambda^*$ and $W \in \R^{d \times k}$ be a matrix with them as columns. For a $k$-uple $e = (e_1, \dots, e_k)$ of positive integers, let $|e| := \sum_{j = 1}^k e_j$. For all $x \in \R^d$, our goal in this section is to evaluate
\begin{equation} \label{eq:main-limit}
L_\Lambda(W, e; x) :=
\lim_{\epsilon \rightarrow 0^+} 
\frac{1}{(2\pi i)^{|e|}} \sum_{\substack{\xi \in \Lambda:\\ \innerp{w_j}{\xi} \neq 0, \forall j}} \frac{e^{-2\pi i \innerp{x}{\xi}}}{\prod_{j = 1}^k \innerp{w_j}{\xi}^{e_j}} e^{-\pi \epsilon \|\xi\|^2}.
\end{equation}
These limits of lattice sums come up in the development of the expression in Theorem~\ref{thm:ak-formula}, they also appear in the work of Witten, on $2$-dimensional gauge theory~\cite[pp.~363]{witten92}, and a similar expression with a different limit process instead of the Gaussian factor is called a \emph{Dedekind sum} by Gunnels and Sczech~\cite{gunnels03}. This name is justified since the expression obtained in Theorem~\ref{cor:lattice-sum2} can be written as a Dedekind-Rademacher sum in the case that $e = (1,1)$ (cf. Section~\ref{sec:interior-points}).

For any $k$-uple $e = (e_1, \dots, e_k)$ of positive integers we define a \textbf{$k$-dimensional Bernoulli polynomial} $\Bcal_e \colon \R^k \to \R$ as
\[\Bcal_e(x) := B_{e_1}(x_1) \cdots B_{e_k}(x_k).\]
Note that $\Bcal_e$ is supported in $[0,1]^k$. The reason for defining these polynomials is that their Fourier transforms, evaluated at integer inputs, are the inverse of products of linear forms, as stated in the lemma below (see e.g. Apostol~\cite[Theorem 12.19]{apostol76}).

\begin{lemma}\label{lem:fourier-bernoulli}
For all $r \geq 1$, the Fourier transform of the Bernoulli polynomial $B_r(x)$ satisfies:
\[
\hat{B_r}(n) = 
\begin{cases}
0,\quad \text{if } n = 0,\\
-\frac{r!}{(2\pi i)^r n^r}\quad \text{if } n \in \Z \setminus \{0\}.
\end{cases}
\]
Thus for any $k$-uple $e = (e_1, \dots, e_k)$ of positive integers,
\[
\hat{\Bcal_e}(m) =
\begin{cases}
0,\quad \text{if } m_j = 0 \text{ for some }j,\\
\frac{(-1)^de_1!\cdots e_k!}{(2\pi i)^{|e|} m_1^{e_1}\cdots m_k^{e_k} }\quad 
\text{if } m \in (\Z \setminus \{0\})^k.
\end{cases}
\]
\end{lemma}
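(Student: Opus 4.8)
The plan is to reduce the $k$-variable statement to the one-dimensional case and then prove the latter by integration by parts, exploiting the defining derivative relation of the Bernoulli polynomials. Since $\Bcal_e(x) = B_{e_1}(x_1)\cdots B_{e_k}(x_k)$ is a product of bounded, compactly supported functions, each depending on a single coordinate, Fubini's theorem gives the factorization
\[
\hat{\Bcal_e}(m) = \prod_{j=1}^{k} \hat{B_{e_j}}(m_j).
\]
Everything then follows once the scalar formula for $\hat{B_r}(n)$ is in hand: if some $m_j = 0$ the corresponding factor vanishes, and so does the whole product; if every $m_j \neq 0$, the product of the $k$ scalar values assembles into the claimed expression, the sign arising as the product of the $k$ factors of $-1$ (so it is $(-1)^k$, which is what should appear in place of $(-1)^d$).

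For the scalar case I would use that the truncated $B_r$ is supported on $[0,1]$, so that $\hat{B_r}(n) = \int_0^1 B_r(x)\, e^{-2\pi i n x}\,\diff x$. The case $n = 0$ is immediate from the standard identity $\int_0^1 B_r(x)\,\diff x = 0$ for $r \geq 1$. For $n \neq 0$, I would integrate by parts, differentiating $B_r$ via the relation $B_r'(x) = r\,B_{r-1}(x)$. Because $n$ is an integer, $e^{-2\pi i n} = 1$, so the boundary term collapses to a multiple of $B_r(1) - B_r(0)$; for $r \geq 2$ this difference vanishes, leaving the recurrence
\[
\hat{B_r}(n) = \frac{r}{2\pi i n}\,\hat{B_{r-1}}(n), \qquad r \geq 2.
\]

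The base of the induction is the one point requiring genuine care, and it is the step I expect to be the main obstacle: at $r = 1$ the boundary term does not vanish, since $B_1(1) - B_1(0) = 1$, and instead contributes $-1/(2\pi i n)$; combined with $\hat{B_0}(n) = \int_0^1 e^{-2\pi i n x}\,\diff x = 0$ for $n \neq 0$, this yields $\hat{B_1}(n) = -1/(2\pi i n)$, matching the claim at $r = 1$. Iterating the recurrence from this base produces $\hat{B_r}(n) = -r!/((2\pi i)^r n^r)$ for all $r \geq 1$ and all nonzero integers $n$. Substituting these scalar values back into the factorization completes the proof, the only subtlety worth flagging being that the exponent of $-1$ in the final product equals the number $k$ of factors.
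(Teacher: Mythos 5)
Your proof is correct and follows the standard argument: the paper does not prove this lemma itself but defers to Apostol (Theorem 12.19), whose derivation of the Fourier coefficients of the Bernoulli polynomials is essentially the same integration-by-parts induction you give, with the $r=1$ base case carrying the nonvanishing boundary term $B_1(1)-B_1(0)=1$. You are also right that the sign in the multivariable formula should be $(-1)^k$ rather than the $(-1)^d$ printed in the statement (a typo that is harmless in the paper's first application, where $k=d$, and is written correctly as $(-1)^k$ in Theorem~\ref{cor:lattice-sum2}).
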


Returning to the evaluation of $L_\Lambda(W,e;x)$, we assume first that $\Lambda$ is the full-dimensional integer lattice $\Z^d$; in this case $W$ is invertible. Let $P_{W,x}$ be the parallelepiped
\[P_{W,x} := \{n \in \R^d : W^{-1}(n-x) \in [0,1]^d\} = x + W[0,1]^d.\]
We prove the following theorem, which gives a finite form for \eqref{eq:main-limit}, in terms of a sum over the integer points in $P_{W,x}$ and the $d$-dimensional Bernoulli polynomial times a local solid angle.

\begin{theorem}\label{thm:lattice-sum}
If $W \in \Z^{d \times d}$ is an invertible matrix with columns $w_1, 
\dots, w_d$, $e = (e_1, \dots, e_d)$ is a $d$-uple of positive integers and $x \in 
\R^d$, then:
\[L_{\Z^d}(W,e;x) = \frac{(-1)^d}{e_1!\cdots e_d! |\det(W)|} 
\sum_{n \in \Z^d \cap P_{W,x}} \Bcal_e\big(W^{-1}(n-x)\big)\omega_{P_{W,x}}(n).\]
\end{theorem}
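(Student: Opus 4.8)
The plan is to recognize the summand of $L_{\Z^d}(W,e;x)$ as, up to a constant, the Fourier transform of the truncated Bernoulli polynomial $\Bcal_e$ evaluated at $W^\trans\xi$, and then push everything through the convolution/scaling identities, Poisson summation, and Lemma~\ref{thm:conv-series}. The starting point is Lemma~\ref{lem:fourier-bernoulli}: writing $m=W^\trans\xi$, so that $m_j=\innerp{w_j}{\xi}\in\Z$ for $\xi\in\Z^d$ (here I use that the columns $w_j$ of $W$ are integer vectors), it gives
\[
\hat{\Bcal_e}(W^\trans\xi)=\frac{(-1)^d e_1!\cdots e_d!}{(2\pi i)^{|e|}\prod_{j}\innerp{w_j}{\xi}^{e_j}}
\]
whenever all $\innerp{w_j}{\xi}\neq 0$, and $\hat{\Bcal_e}(W^\trans\xi)=0$ as soon as some $\innerp{w_j}{\xi}=0$. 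The vanishing is exactly what lets me drop the restriction ``$\innerp{w_j}{\xi}\neq 0,\,\forall j$'': the terms I would be adding are precisely those on which $\hat{\Bcal_e}(W^\trans\xi)$ already vanishes. Hence the summand equals $\frac{(-1)^d}{e_1!\cdots e_d!}\hat{\Bcal_e}(W^\trans\xi)\,e^{-2\pi i\innerp{x}{\xi}}$ for every $\xi\in\Z^d$.

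Next I would repackage this as the Fourier transform of a single compactly supported function. The scaling identity \eqref{eq:fourier3} with $M=W$ gives $\hat{\Bcal_e}(W^\trans\xi)=|\det(W)|^{-1}\Fcal(\Bcal_e\circ W^{-1})(\xi)$, and the translation identity \eqref{eq:fourier1} absorbs the phase $e^{-2\pi i\innerp{x}{\xi}}$, so that, setting $h_x(u):=\Bcal_e\big(W^{-1}(u-x)\big)$, which is supported precisely on $P_{W,x}=x+W[0,1]^d$, the summand becomes $\frac{(-1)^d}{e_1!\cdots e_d!\,|\det(W)|}\hat{h_x}(\xi)$. Recognizing the Gaussian factor as $e^{-\pi\epsilon\|\xi\|^2}=\hat{\phi}_\epsilon(\xi)$ and invoking the convolution identity \eqref{eq:fourier2}, I get $\hat{h_x}(\xi)\hat{\phi}_\epsilon(\xi)=\Fcal(h_x\ast\phi_{d,\epsilon})(\xi)$. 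For each fixed $\epsilon>0$ the function $h_x\ast\phi_{d,\epsilon}$ is smooth with rapid decay and its transform decays exponentially, so Poisson summation (Theorem~\ref{thm:poisson}) applies with $\Lambda=\Z^d$ (whence $\det(\Lambda)=1$ and $\Lambda^*=\Z^d$) and $y=0$, converting the frequency-side series into $\sum_{n\in\Z^d}(h_x\ast\phi_{d,\epsilon})(n)$.

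The final step is to take $\epsilon\to 0^+$ using Lemma~\ref{thm:conv-series} with $f=h_x$ and $P=P_{W,x}$, which yields $\sum_{n\in\Z^d}h_x(n)\,\omega_{P_{W,x}}(n)$; since $h_x$ vanishes off $P_{W,x}$ the sum collapses to $n\in\Z^d\cap P_{W,x}$, and unwinding $h_x(n)=\Bcal_e(W^{-1}(n-x))$ gives exactly the claimed formula. The step requiring the most care — and the main obstacle — is the applicability of Lemma~\ref{thm:conv-series}, which demands that $h_x$ be continuous on the closed polytope $P_{W,x}$ and zero outside. Although $\Bcal_e$ is globally discontinuous (because of the truncation $B_r:=0$ off $[0,1]$), on the closed cube $[0,1]^d$ each factor $B_{e_j}$ is merely a polynomial, so $\Bcal_e$ is continuous there, with the only jumps occurring across $\partial[0,1]^d$; this matches the ``continuous on $P$, zero outside'' hypothesis exactly, and the weight $\omega_{P_{W,x}}(n)$ is precisely what records the jump at boundary lattice points. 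I would also verify that the order of operations (rewriting the summand first, applying Poisson summation at fixed $\epsilon$ second, and taking the limit only at the end) preserves the single limit appearing in \eqref{eq:main-limit} throughout.
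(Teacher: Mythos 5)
Your proposal is correct and follows essentially the same route as the paper's proof: identify the summand via Lemma~\ref{lem:fourier-bernoulli} and identities \eqref{eq:fourier1}--\eqref{eq:fourier3} as the Fourier transform of $\Bcal_e \circ W^{-1} \circ T_x$, apply Poisson summation at fixed $\epsilon$, and pass to the limit with Lemma~\ref{thm:conv-series}. Your added care about the continuity hypothesis of Lemma~\ref{thm:conv-series} (continuity of $\Bcal_e$ on the closed cube, with the solid-angle weights absorbing the boundary jumps) is a correct reading of what the paper leaves implicit.
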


\begin{proof}
We recognize each term inside sum~\eqref{eq:main-limit} as the Fourier transform of a function, apply Poisson summation and then use Lemma~\ref{thm:conv-series} to compute the limit.

Using Lemma~\ref{lem:fourier-bernoulli} and identity~\eqref{eq:fourier3} with $\Bcal_e$ and $W$, for any $\xi \in \Z^d$ such that $\innerp{w_j}{\xi} \neq 0$ for all $j$, we have:
\[ \frac{1}{(2\pi i)^{|e|}\prod_{j = 1}^d \innerp{w_j}{\xi}^{e_j}} =
\frac{(-1)^d}{e_1!\cdots e_d!}(\hat{\Bcal_e}\circ W^\trans)(\xi) =
\frac{(-1)^d}{e_1!\cdots e_d!|\det(W)|}\Fcal(\Bcal_e \circ W^{-1})(\xi).\]

To obtain the same term that appears in~\eqref{eq:main-limit}, we make use of identity~\eqref{eq:fourier1} and recall that $\hat{\phi}_\epsilon(\xi) = e^{-\pi \epsilon \|\xi\|^2}$. Further noticing that $(\hat{\Bcal_e}\circ W^\trans)(\xi) = 0$ when $\innerp{w_j}{\xi} = 0$ for some $j$, we have:
\[L_{\Z^d}(W,e;x) =  \lim_{\epsilon \rightarrow 0^+} \frac{(-1)^d}{e_1!\cdots e_d! |\det(W)|} \sum_{\xi \in \Z^d} \Fcal(\Bcal_e \circ W^{-1} \circ 
T_x)(\xi)\hat{\phi_\epsilon}(\xi).\]
Using identity~\eqref{eq:fourier2} and Poisson summation (Theorem~\ref{thm:poisson}),
\[L_{\Z^d}(W,e;x) = \lim_{\epsilon \rightarrow 0^+} \frac{(-1)^d}{e_1!\cdots e_d! |\det(W)|} \sum_{n \in \Z^d} \big((\Bcal_e \circ W^{-1} \circ T_x)\ast 
\phi_{d,\epsilon}\big)(n).\]
Note that the support of $\Bcal_e \circ W^{-1} \circ T_x$ is exactly $P_{W, x}$ and $(\Bcal_e \circ W^{-1} \circ T_x)(n) = \Bcal_e\big(W^{-1}(n-x)\big)$. This enables us to use Lemma~\ref{thm:conv-series} and obtain

\[L_{\Z^d}(W,e;x) = \frac{(-1)^d}{e_1!\cdots e_d! |\det(W)|} \sum_{n \in \Z^d \cap P_{W,x}} 
\Bcal_e\big(W^{-1}(n-x)\big)\omega_{P_{W,x}}(n). \qedhere\]
\end{proof}

The situation is almost the same for the general case where $\Lambda$ is a $k$-dimensional lattice in $\R^d$, however in this case we must restrict attention to the subspace spanned by~$\Lambda$. Note that $W \in \R^{d \times k}$ is not invertible but when we see it as a linear transformation $W \colon \R^k \to \spann(\Lambda)$ it is, such inverse is called the \textbf{pseudoinverse} and can be computed as $W^{+} = (W^\trans W)^{-1}W^\trans$. Furthermore, it follows that $WW^{+}$ is the orthogonal projection $\Proj_{\spann(\Lambda)}$ from $\R^d$ to $\spann(\Lambda)$. The parallelepiped $P_{W,x}$ becomes a $k$-dimensional parallelepiped in $\spann(\Lambda)$:
\begin{equation*}
P_{W,x} := \{n \in \spann(\Lambda) : W^{+}(n-x) \in [0,1]^k\} = \Proj_{\spann(\Lambda)}(x) + W[0,1]^k.
\end{equation*}
Identity~\eqref{eq:fourier3} also has to be adapted, since we are dealing with a $k$-dimensional subspace embedded in $\R^d$. More specifically, for $f \colon \R^k \to \C$ and $\xi \in \spann(\Lambda)$, in place of~\eqref{eq:fourier3} we use:
\begin{equation*}
\det(W^\trans W)^{1/2}\int_{\R^k} f(y)e^{-2\pi i \innerp{y}{W^\trans \xi}}\diff y
= \int_{\spann(\Lambda)}f(W^+x)e^{-2\pi i \innerp{x}{\xi}}\diff x.
\end{equation*}
With these remarks, the same proof of the previous theorem gives:

\begin{theorem}\label{cor:lattice-sum2}
If $W \in \R^{d \times k}$ is a matrix with linearly independent columns $w_1, \dots$, $w_k \in \Lambda^*$, $e = (e_1, \dots, e_k)$ is a $k$-uple of positive integers and $x \in \R^d$, then:
\[L_{\Lambda}(W,e;x) = \frac{(-1)^k}{e_1!\cdots e_k! \det(W^\trans W)^{1/2} \det(\Lambda)} \sum_{n \in \Lambda^* \cap P_{W,x}} \Bcal_e\big(W^{+}(n-x)\big)\omega_{P_{W,x}}(n).\]
\end{theorem}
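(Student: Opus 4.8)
The plan is to mimic the proof of Theorem~\ref{thm:lattice-sum} almost verbatim, adapting each Fourier-analytic ingredient from the full-dimensional setting $\Z^d$ to the lower-dimensional lattice $\Lambda \subseteq \R^d$ of rank $k$. The essential point is that all the action happens inside the $k$-dimensional subspace $\spann(\Lambda)$, so I would first reinterpret $W$ not as a singular $d \times k$ matrix but as an invertible linear map $W \colon \R^k \to \spann(\Lambda)$, with pseudoinverse $W^+ = (W^\trans W)^{-1} W^\trans$ as already recorded in the excerpt. The function whose Fourier transform appears in the summand of $L_\Lambda(W,e;x)$ is $\Bcal_e \circ W^+ \circ T_x$, supported exactly on the $k$-dimensional parallelepiped $P_{W,x} = \Proj_{\spann(\Lambda)}(x) + W[0,1]^k$, and I would verify that $(\Bcal_e \circ W^+ \circ T_x)(n) = \Bcal_e(W^+(n-x))$, exactly paralleling the full-dimensional case.

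The key steps, in order, are as follows. First, using the adapted version of identity~\eqref{eq:fourier3} displayed just before the statement, together with Lemma~\ref{lem:fourier-bernoulli}, I would rewrite each summand $\frac{1}{(2\pi i)^{|e|}}\big(\prod_{j=1}^k \innerp{w_j}{\xi}^{e_j}\big)^{-1}$ for $\xi \in \Lambda$ as a constant multiple of $\Fcal(\Bcal_e \circ W^+)(\xi)$, where the Jacobian factor $\det(W^\trans W)^{1/2}$ now replaces $|\det(W)|$. Crucially, since $\xi$ ranges over $\Lambda \subseteq \spann(\Lambda)$, the inner products $\innerp{w_j}{\xi}$ are genuine pairings within the subspace, so Lemma~\ref{lem:fourier-bernoulli} applies with $m_j = \innerp{w_j}{\xi}$, and the vanishing of $\hat{\Bcal}_e$ when some $m_j = 0$ correctly kills the excluded frequencies $\innerp{w_j}{\xi} = 0$. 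Second, I would fold in the Gaussian factor $\hat{\phi}_\epsilon(\xi) = e^{-\pi\epsilon\|\xi\|^2}$ via the translation identity~\eqref{eq:fourier1} and convolution identity~\eqref{eq:fourier2}. Third, I would apply Poisson summation (Theorem~\ref{thm:poisson}) over the lattice $\Lambda$, which replaces the sum over $\xi \in \Lambda$ by a sum over the dual lattice $\Lambda^*$ and introduces the factor $\det(\Lambda)$ in the denominator, turning the series into $\sum_{n \in \Lambda^*}\big((\Bcal_e \circ W^+ \circ T_x) \ast \phi_{d,\epsilon}\big)(n)$. Finally, since $\Bcal_e \circ W^+ \circ T_x$ is continuous and supported on the compact set $P_{W,x}$, I would take the limit $\epsilon \to 0^+$ using Lemma~\ref{thm:conv-series}, which yields $\sum_{n \in \Lambda^* \cap P_{W,x}} \Bcal_e(W^+(n-x))\,\omega_{P_{W,x}}(n)$ and completes the identity.

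The main obstacle, and the step requiring genuine care rather than routine copying, is the application of Poisson summation and Lemma~\ref{thm:conv-series} within the subspace $\spann(\Lambda)$ rather than all of $\R^d$. Two subtleties arise. First, the convolution in Lemma~\ref{thm:conv-series} is taken against the $d$-dimensional Gaussian $\phi_{d,\epsilon}$ in the ambient space, whereas the function $\Bcal_e \circ W^+ \circ T_x$ lives only on the $k$-dimensional slice; one must confirm that the limit $\lim_{\epsilon \to 0^+}(f \ast \phi_{d,\epsilon})(n) = f(n)\,\omega_{P_{W,x}}(n)$ still holds when $f$ is supported on a lower-dimensional polytope $P_{W,x}$ viewed as a full-dimensional body inside $\spann(\Lambda)$ — here the solid angle $\omega_{P_{W,x}}(n)$ must be interpreted relative to the subspace, which is precisely the relative solid angle that appears in the statement. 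Second, applying Poisson summation to $\Lambda$ requires that $f \ast \phi_{d,\epsilon}$, restricted appropriately, satisfies the decay hypotheses; this is straightforward since the Gaussian provides rapid decay, but one should note that the dual-lattice sum runs over $\Lambda^* \subseteq \spann(\Lambda)$. Once these two subspace-versus-ambient-space bookkeeping matters are settled, the decay conditions are inherited from the Gaussian exactly as before, and the computation goes through line-for-line; indeed the excerpt's remark that ``the same proof of the previous theorem gives'' the result signals that no new analytic input beyond this reinterpretation is needed.
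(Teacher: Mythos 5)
Your proposal is correct and follows essentially the same route as the paper, which itself proves this statement only by listing the needed adaptations (the pseudoinverse $W^+$, the parallelepiped $P_{W,x}$ inside $\spann(\Lambda)$, and the subspace version of identity~\eqref{eq:fourier3}) and then invoking the proof of Theorem~\ref{thm:lattice-sum} verbatim. The one point to state more carefully is that the convolution and Poisson summation must be carried out intrinsically in $\spann(\Lambda)\cong\R^k$ with the $k$-dimensional Gaussian $\phi_{k,\epsilon}$ (whose Fourier transform there is still $e^{-\pi\epsilon\|\xi\|^2}$), rather than literally convolving with $\phi_{d,\epsilon}$ in $\R^d$ against a function supported on a measure-zero slice, which would vanish identically; your parenthetical about interpreting $\omega_{P_{W,x}}$ relative to the subspace shows you have the right resolution in mind.
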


\begin{remark}
Theorem~\ref{thm:lattice-sum} is similar to Proposition 2.7 of Gunnels and Sczech~\cite{gunnels03}. The main difference between these two results is that the theorem above uses solid angle weights but when all $e_j > 1$, the sum in~\eqref{eq:main-limit} is absolutely convergent for $\epsilon = 0$ and we may interchange the limit with the lattice sum. The resulting sum is then equal to the Dedekind sum considered by Gunnels and Sczech, and Theorem~\ref{thm:lattice-sum} can be compared with their Proposition 2.7.
\end{remark}

\bigskip
\section{Proofs of Theorem~\ref{thm:ad2-formula} and Corollary  \ref{cor:APt-int}}   \label{sec:ad2}

We start with a lemma that shows how the 'type $(h,k)$' simultaneously describes the relation of $\fcone(P,G)$ with respect to $\Lambda_G$ and with $\Lambda_G^*$.

\begin{lemma}\label{lem:hk}
If $h$ and $k$ are such that $v_1 := v_{F_1,G}$ and $v_2 := (v_{F_2,G} - h v_{F_1,G})/k$ form a lattice basis for $\Lambda_G^*$ (as defined above), then 
\[
u_1 := v_{F_1}   \text{ and  }  u_2 := (v_{F_2} + h v_{F_1})/k
\]
 form a lattice basis for $\Lambda_G$. In particular, 
\[
k = 
\frac{|\det(v_{F_1,G}, v_{F_2,G})|}{\det(\Lambda_G^*)} 
= \frac{|\det(v_{F_1}, v_{F_2})|}{ \det(\Lambda_G)}.
\]
\end{lemma}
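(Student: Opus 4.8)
The plan is to work entirely inside the two-dimensional normal plane $\lin(G)^\perp$, which contains both $\Lambda_G$ and its dual $\Lambda_G^*$, and to combine two geometric facts with lattice duality. First I would record the basic incidences. Since $N_P(F_j) \perp \lin(F_j) \supseteq \lin(G)$, the primitive integer vectors $v_{F_1}, v_{F_2}$ lie in $\Z^d \cap \lin(G)^\perp = \Lambda_G$ and are in fact $\Lambda_G$-primitive (a primitive vector of $\Z^d$ lying in $\Lambda_G$ remains primitive there), while $v_{F_1,G}, v_{F_2,G} \in \Lambda_G^*$ by definition. The crucial orthogonality is $v_{F_1} \perp v_{F_1,G}$ and $v_{F_2} \perp v_{F_2,G}$, because $N_{F_j}(G) \in \lin(F_j)$ whereas $N_P(F_j) \perp \lin(F_j)$. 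I also need the orientation facts $\innerp{v_{F_1,G}}{v_{F_2}} > 0$ and $\innerp{v_{F_2,G}}{v_{F_1}} > 0$. To get these I would describe $\fcone(P,G)$ in the plane as the wedge $\{z : \innerp{z}{N_P(F_1)} \leq 0,\ \innerp{z}{N_P(F_2)} \leq 0\}$; its edge lying in the facet-$F_1$ line points along $-N_{F_1}(G)$ (moving from $G$ into the interior of $F_1$), and feasibility of that edge against the $F_2$ inequality forces $\innerp{N_{F_1}(G)}{N_P(F_2)} \geq 0$, with strictness because two distinct facets cannot have parallel normals. This simultaneously produces $\innerp{N_{F_1}(G)}{N_P(F_2)} = \sqrt{1-c_G^2}$, the positive sign used later in the algorithmic section.

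Next I would introduce the dual basis: with $w_1 := v_{F_1,G}$ and $w_2 := v_2$ the given basis of $\Lambda_G^*$, let $\{a_1, a_2\}$ be the basis of $\Lambda_G = (\Lambda_G^*)^*$ dual to it, so $\innerp{a_i}{w_j} = \delta_{ij}$. Writing $v_{F_1} = \alpha a_1 + \beta a_2$ with integer coordinates, orthogonality gives $\innerp{v_{F_1}}{w_1} = \alpha = 0$, and primitivity forces $v_{F_1} = \pm a_2$; the orientation fact $\innerp{v_{F_2,G}}{v_{F_1}} = \beta k > 0$ then pins $\beta = 1$, so $v_{F_1} = a_2$. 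Writing $v_{F_2} = \gamma a_1 + \delta a_2$, the relation $v_{F_2,G} = h w_1 + k w_2$ together with $\innerp{v_{F_2}}{v_{F_2,G}} = \gamma h + \delta k = 0$ and the coprimalities $\gcd(h,k)=1$, $\gcd(\gamma,\delta)=1$ forces $(\gamma,\delta) = \pm(k,-h)$, and the orientation fact $\innerp{v_{F_1,G}}{v_{F_2}} = \gamma > 0$ selects $(\gamma,\delta) = (k,-h)$, i.e. $v_{F_2} = k a_1 - h a_2$.

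Finally I would read off the conclusion: $u_1 = v_{F_1} = a_2$ and $u_2 = (v_{F_2} + h v_{F_1})/k = (k a_1 - h a_2 + h a_2)/k = a_1$, so that $\{u_1, u_2\} = \{a_2, a_1\}$ is a basis of $\Lambda_G$. The determinant identities then fall out: $\det(\Lambda_G^*) = |\det(w_1, w_2)|$ together with $v_{F_2,G} = h w_1 + k w_2$ gives $|\det(v_{F_1,G}, v_{F_2,G})| = k\det(\Lambda_G^*)$, while $v_{F_1} = a_2$ and $v_{F_2} = k a_1 - h a_2$ give $|\det(v_{F_1}, v_{F_2})| = k\det(\Lambda_G)$. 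The main obstacle is the sign bookkeeping: perpendicularity and primitivity only determine $v_{F_1}, v_{F_2}$ up to sign, and the entire content of the lemma — the flip from $h$ in the $\Lambda_G^*$ relation to $-h$ in the $\Lambda_G$ relation — hinges on fixing those signs correctly, so the careful wedge-cone orientation argument is where the real work lies; the remainder is routine duality and linear algebra.
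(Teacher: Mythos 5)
Your proof is correct, but it reaches the conclusion by a genuinely different route from the paper. The paper reduces the lemma to the single Gram identity $\innerp{v_{F_1,G}}{v_{F_2}} = \innerp{v_{F_2,G}}{v_{F_1}} = k$, which it verifies \emph{metrically}: it applies Lemma~\ref{lm:det-Lperp} to the rank-one sublattice spanned by $v_{F_2}$ to get $\|v_{F_2}\| = \det(\Lambda_G)\|v_{F_2,G}\|$, computes $\det(v_{F_1,G},v_{F_2,G})$ in two ways to extract $\|v_{F_1,G}\|\|v_{F_2,G}\|\sqrt{1-c_G^2} = k/\det(\Lambda_G)$, and then the $2\times 2$ matrix identity $(v_2,v_1)^\trans(u_1,u_2)=I$ exhibits $(u_1,u_2)$ as the dual basis of $(v_2,v_1)$, hence a basis of $\Lambda_G$. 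You instead argue \emph{arithmetically}: expanding $v_{F_1}$ and $v_{F_2}$ in the basis $(a_1,a_2)$ of $\Lambda_G$ dual to $(v_1,v_2)$, the orthogonality $v_{F_j}\perp v_{F_j,G}$, the $\Lambda_G$-primitivity of $v_{F_1},v_{F_2}$ (your observation that $\Z^d$-primitivity passes to the saturated sublattice $\Lambda_G$ is the right justification), the coprimality of $h,k$, and the orientation inequalities pin down $v_{F_1}=a_2$ and $v_{F_2}=ka_1-ha_2$, from which $u_1=a_2$, $u_2=a_1$ and both determinant identities fall out; the paper's key inner products $\innerp{v_{F_1,G}}{v_{F_2}}=\innerp{v_{F_2,G}}{v_{F_1}}=k$ emerge as a byproduct rather than as the engine. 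Your version is more elementary in that it bypasses Lemma~\ref{lm:det-Lperp} and all norm computations, and it is more careful on the one point the paper leaves to Figure~\ref{fig:rel-orient}: your wedge-cone argument is a legitimate proof that $\innerp{N_{F_1}(G)}{N_P(F_2)} = \sqrt{1-c_G^2} > 0$, i.e.\ of the orientation relations \eqref{eq:unit-vectors}, and you correctly identify that the sign bookkeeping is where the $h\mapsto -h$ flip between the $\Lambda_G^*$- and $\Lambda_G$-relations is decided. What the paper's route buys is reusability: the intermediate identities ($\|v_{F_2}\|=\det(\Lambda_G)\|v_{F_2,G}\|$ and $|\det(v_{F_1,G},v_{F_2,G})| = \|v_{F_1,G}\|\|v_{F_2,G}\|\sqrt{1-c_G^2}$) are invoked again in Section~\ref{sec:complexity} and in the computation of $b_j(G;t)$ and $c(G;t)$, so the authors get them for free here.
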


\begin{proof}
We have to prove that $\innerp{v_{F_1,G}}{v_{F_2}} = \innerp{v_{F_2,G}}{v_{F_1}} = k$. Using this, the lemma follows directly from the following computation:
\begin{multline*}
(v_2, v_1)^\trans (u_1, u_2) =  \lp\begin{smallmatrix} -h/k & 1/k\\ 1 & 0 \end{smallmatrix}\rp (v_{F_1,G}, v_{F_2,G})^\trans (v_{F_1}, v_{F_2}) \lp\begin{smallmatrix} 1 & h/k\\ 0 & 1/k \end{smallmatrix}\rp\\ 
=  \lp\begin{smallmatrix} -h/k & 1/k\\ 1 & 0 \end{smallmatrix}\rp \lp\begin{smallmatrix} 0 & k\\ k & 0 \end{smallmatrix}\rp \lp\begin{smallmatrix} 1 & h/k\\ 0 & 1/k \end{smallmatrix}\rp
= k \lp\begin{smallmatrix} -h/k & 1/k\\ 1 & 0 \end{smallmatrix}\rp  \lp\begin{smallmatrix} 0 & 1/k\\ 1 & h/k \end{smallmatrix}\rp = \lp\begin{smallmatrix} 1 & 0\\ 0 & 1 \end{smallmatrix}\rp.
\end{multline*}

Since we work simultaneously with two orthonormal basis $\{N_P(F_1), N_{F_1}(G)\}$ and $\{N_P(F_2), N_{F_2}(G)\}$ for $\lin(G)^\perp$, it is useful to know how they are related. From the outward orientation of the normal vectors (see Figure~\ref{fig:rel-orient}), we have
\begin{align}\label{eq:unit-vectors}
\begin{aligned}
N_P(F_2) &= -c_G N_P(F_1) + \sqrt{1-c_G^2}N_{F_1}(G), \text{ and}\\ 
N_{F_2}(G) &= \sqrt{1-c_G^2}N_P(F_1) + c_GN_{F_1}(G).
\end{aligned}
\end{align}

\begin{figure}[t]
\begin{tikzpicture}[scale = .8]
\draw[thick] (2.2,3.3) -- (1.368,2.084) node[below] {\small $F_1$};
\draw[thick] (2.2,3.3) -- (2.008,1.828) node[below] {\small $F_2$};
\draw[->, thick, blue] (2.2,3.3) -- (2.902,4.326) node[right] {\small $N_{F_1}(G)$};
\draw[->, thick, blue] (2.2,3.3) -- (1.174,4.002) node[above] {\small $N_{P}(F_1)$};
\draw[->, thick, red] (2.2,3.3) -- (2.362,4.542) node[above] {\small $N_{F_2}(G)$};
\draw[->, thick, red] (2.2,3.3) -- (3.442,3.138) node[above] {\small $N_{P}(F_2)$};

\draw[thick] (8.4,2.2) -- (6.4,2) node[below] {\small $F_1$};
\draw[thick] (8.4,2.2) -- (10.2,1.8) node[below] {\small $F_2$};
\draw[->, thick, blue] (8.4,2.2) -- (10,2.36) node[above] {\small $N_{F_1}(G)$};
\draw[->, thick, blue] (8.4,2.2) -- (8.24,3.8) node[left] {\small $N_{P}(F_1)$};
\draw[->, thick, red] (8.4,2.2) -- (6.96,2.52) node[above] {\small $N_{F_2}(G)$};
\draw[->, thick, red] (8.4,2.2) -- (8.72,3.64) node[right] {\small $N_{P}(F_2)$};
\end{tikzpicture}
\caption{Relative orientations between the normal vectors of each facet.}\label{fig:rel-orient}
\end{figure}

We prove $\innerp{v_{F_1,G}}{v_{F_2}} = k$, since the proof for the other inner product is the same. The $\Lambda_G$-primitive vector $v_{F_2}$ along $N_P(F_2)$ and the $\Lambda_G^*$-primitive vector $v_{F_2,G}$ along $N_{F_2}(G)$ have a special relation. Using Lemma~\ref{lm:det-Lperp} with $\Lambda := \Lambda_G$ and $L$ as the one dimensional lattice spanned by $v_{F_2}$, we get
\begin{equation}\label{eq:vF2-F2G}
\|v_{F_2}\| = \det(\Lambda_G) \|v_{F_2,G}\|.
\end{equation}
Next we establish an identity developing $\det(v_{F_1,G}, v_{F_2,G})$ in two ways:
\begin{multline*}
\det(v_{F_1,G}, v_{F_2,G})^2 = \det\big( (v_{F_1,G}, v_{F_2,G})^\trans(v_{F_1,G}, v_{F_2,G})\big)\\ = \|v_{F_1,G}\|^2\|v_{F_2,G}\|^2 - \innerp{v_{F_1,G}}{v_{F_2,G}}^2
= \|v_{F_1,G}\|^2\|v_{F_2,G}\|^2(1 - c_G^2),
\end{multline*}
and
\begin{multline*}
\det(v_{F_1,G}, v_{F_2,G})^2 = \det\big( (v_{F_1,G}, v_{F_2,G})^\trans(v_{F_1,G}, v_{F_2,G})\big)\\ = \det\Big( \lp\begin{smallmatrix} 1 & h\\ 0 & k \end{smallmatrix}\rp^\trans (v_{1}, v_{2})^\trans (v_{1}, v_{2}) \lp\begin{smallmatrix} 1 & h\\ 0 & k \end{smallmatrix}\rp\Big) = k^2 \det(\Lambda_G^*)^2 = k^2/ \det(\Lambda_G)^2.
\end{multline*}
Finally,
\begin{multline*}
\innerp{v_{F_1,G}}{v_{F_2}} = \|v_{F_1,G}\|\|v_{F_2}\|\innerp{N_{F_1}(G)}{N_P(F_2)}\\ = \|v_{F_1,G}\|\|v_{F_2,G}\|\det(\Lambda_G)\sqrt{1-c_G^2} = k.\qedhere
\end{multline*}
\end{proof}

We now proceed to the main result of the paper, whose proof is somewhat longer, and is subdivided into several sections.
\thmadtwoformula* 

\medskip
\subsection{Proof of Theorem~\ref{thm:ad2-formula}}
We start with the formula from Theorem~\ref{thm:ak-formula} and consider all chains $(P \to F \to G)$ of length $2$:
\begin{multline*}
 a_{d-2}(t) = \lim_{\epsilon \to 0^+}\frac{1}{(-2\pi i)^2} \sum_{F \subset P} \sum_{G \subset F} \vol(G)\\
\sum_{\xi \in \Lambda_G \setminus \Lambda_F} \frac{\innerp{\xi}{N_P(F)} \innerp{\Proj_F(\xi)}{N_F(G)}}{\innerp{\xi}{\xi} \innerp{\Proj_F(\xi)}{\Proj_F(\xi)}} 
e^{-2\pi i \innerp{\xi}{t \bar{x}_G}} \hat{\phi}_\epsilon(\xi).
\end{multline*}

Since $N_P(F)$ and $N_F(G)$ form an orthonormal basis for $\lin(G)^{\perp}$, for $\xi \in \lin(G)^\perp$, we have $\Proj_F(\xi) = \innerp{\xi}{N_F(G)}N_F(G)$ and we can simplify the expression above with
\[
\frac{\innerp{\xi}{N_P(F)} \innerp{\Proj_F(\xi)}{N_F(G)}}{\innerp{\xi}{\xi} \innerp{\Proj_F(\xi)}{\Proj_F(\xi)}} 
= \frac{\innerp{\xi}{N_P(F)}}{\innerp{\xi}{N_F(G)}\innerp{\xi}{\xi}}.
\]

Denoting by $F_1$ and $F_2$ the two facets incident to a face $G$ of dimension $d-2$, we switch the order of the sums to obtain
\begin{multline*}
 a_{d-2}(t) = \sum_{\substack{G \subset P\\ \dim(G) = d-2}} \lim_{\epsilon \to 0^+}\frac{\vol(G)}{(-2\pi i)^2} \sum_{j = 1}^2 
\sum_{\xi \in \Lambda_G \setminus \Lambda_{F_j}} \frac{\innerp{\xi}{N_P(F_{j})} e^{-2\pi i \innerp{\xi}{t \bar{x}_G}}}{\innerp{\xi}{N_{F_j}(G)}\innerp{\xi}{\xi}} \hat{\phi}_\epsilon(\xi).
\end{multline*}

It follows from Lemma~\ref{lm:det-Lperp} that $\det(\lin(G) \cap \Z^d) = \det(\lin(G)^{\perp} \cap \Z^d) =: \det(\Lambda_G)$. We note that we are using here the property that $P$ is a rational $d$-dimensional polytope, so that $(\lin(G) \cap \Z^d)^\perp = \lin(G)^{\perp} \cap \Z^d$.  We therefore conclude that  
\[\vol^*(G) :=  \frac{\vol(G)}{\det(\lin(G) \cap \Z^d)} =  \frac{\vol(G)}{\det(\Lambda_G)}. \] 
We decompose the expression into three distinct sums:
\begin{equation}\label{eq:ad2t}
a_{d-2}(t) = \sum_{\substack{G \subset P\\ \dim(G) = d-2}} \vol^*(G)\big( b_1(G;t) + b_2(G;t) + c(G;t)\big),
\end{equation}
where for $j = 1$ and $m = 2$, or for $j = 2$ and $m = 1$, we define
\[
b_j(G;t) := \lim_{\epsilon \to 0^+}\frac{\det(\Lambda_G)}{(-2\pi i)^2} \sum_{\xi \in \Lambda_{F_m}\setminus \{0\}} \frac{\innerp{\xi}{N_P(F_{j})} e^{-2\pi i \innerp{\xi}{t \bar{x}_G}}}{\innerp{\xi}{N_{F_j}(G)}\innerp{\xi}{\xi}} \hat{\phi}_\epsilon(\xi),
\]
and
\begin{equation*}
 c(G;t) := \lim_{\epsilon \to 0^+}\frac{\det(\Lambda_G)}{(-2\pi i)^2} \hspace*{-.4cm}
 \sum_{\substack{\vspace{.1cm}\\\xi \in \Lambda_{G}\setminus (\Lambda_{F_1} \cup \Lambda_{F_2})}} \hspace*{-.3cm}
 \lp\frac{\innerp{\xi}{N_P(F_{1})}}{\innerp{\xi}{N_{F_1}(G)}} + \frac{\innerp{\xi}{N_P(F_{2})}}{\innerp{\xi}{N_{F_2}(G)}}\rp 
 \frac{e^{-2\pi i \innerp{\xi}{t \bar{x}_G}}}{\innerp{\xi}{\xi}} \hat{\phi}_\epsilon(\xi).
\end{equation*}

Next we treat each of these terms separately. The sum in $b_j(G;t)$ is simpler and is dealt with a direct application of Theorem~\ref{thm:lattice-sum}, which is in fact an application of Poisson summation. The sum in $c(G;t)$ takes more work and, after some preparation, is also dealt with 
the help of Theorem~\ref{thm:lattice-sum} (this time it is a $2$-dimensional lattice sum minus two lines) and in the end we recognize the occurrence of a Dedekind-Rademacher sum on each $(d-2)$-dimensional face of $P$.

\subsubsection{Computation of $b_j(G;t)$}

Let $j = 1$ and $m = 2$,  or $j = 2$ and  $m = 1$.  To compute $b_j(G;t)$, write $\xi \in \Lambda_{F_m}$ as $\xi = r v_{F_m}$ with $r \in \Z$:
\begin{align*}
b_j(G;t) = \lim_{\epsilon \to 0^+}\frac{\det(\Lambda_G)}{(-2\pi i)^2} \sum_{r \in \Z \setminus \{0\}} \frac{\innerp{v_{F_m}}{N_P(F_{j})} e^{-2\pi i r \innerp{v_{F_m}}{t \bar{x}_G}}}{\innerp{v_{F_m}}{N_{F_j}(G)}\|v_{F_m}\|^2 r^2} \hat{\phi}_\epsilon(r),
\end{align*}
where we use that $\hat{\phi}_\epsilon(rv_{F_m}) = \hat{\phi}_{\epsilon\|v_{F_m}\|^2}(r)$ and note that this can be replaced by~$\hat{\phi}_\epsilon(r)$ due to the limit in $\epsilon$.

Next, note that $\innerp{v_{F_{m}}}{N_P(F_j)} = \|v_{F_{m}}\|\innerp{N_P(F_m)}{N_P(F_j)} = -\|v_{F_{m}}\| c_G$ and that $\innerp{v_{F_{m}}}{N_{F_j}(G)} = \|v_{F_{m}}\|\innerp{N_P(F_m)}{N_{F_j}(G)} = \|v_{F_{m}}\| \sqrt{1 - c_G^2}$, so
\[\frac{\innerp{v_{F_{m}}}{N_P(F_j)}}{\innerp{v_{F_{m}}}{N_{F_j}(G)}} = \frac{-c_G}{\sqrt{1 - c_G^2}}.\]
We substitute this and recognize the $1$-dimensional sum $L_{\Z}((1), (2); \innerp{v_{F_m}}{t\bar{x}_G})$:
\[
b_j(G;t) = \frac{-c_G\det(\Lambda_G)}{\sqrt{1-c_G^2}\|v_{F_m}\|^2} \lim_{\epsilon \to 0^+}
\frac{1}{(2\pi i)^2} \sum_{r \in \Z \setminus \{0\}} 
\frac{e^{-2\pi i r \innerp{v_{F_m}}{t\bar{x}_G}}}{r^2}\hat{\phi}_\epsilon(r).
\]

Let $I$ be the interval $I := [\innerp{v_{F_m}}{t\bar{x}_G}, \innerp{v_{F_m}}{t\bar{x}_G} + 1]$ and apply Theorem~\ref{thm:lattice-sum}:
\[
b_j(G;t) = \frac{c_G\det(\Lambda_G)}{2\sqrt{1-c_G^2}\|v_{F_m}\|^2} \sum_{n \in \Z \cap I}B_2\big(n - \innerp{v_{F_m}}{t\bar{x}_G}\big)\omega_{I}(n).
\]

Depending on $\innerp{v_{F_m}}{t\bar{x}_G}$ being an integer or not, the sum may have one or two terms. In either case, since $B_2(0) = B_2(1)$ and since $\overline{B}_2$ is an even function,
\begin{equation*}
 b_j(G;t)= \frac{c_G\det(\Lambda_G)}{2\sqrt{1-c_G^2}\|v_{F_m}\|^2} \overline{B}_2\big(\innerp{v_{F_m}}{\bar{x}_G}t\big).
\end{equation*}
Recalling $\det(\Lambda_G) = |\det(v_{F_1}, v_{F_2})|/k$ (Lemma~\ref{lem:hk}), we get
\begin{equation}\label{eq:bjGt}
 b_j(G;t)= \frac{c_G\|v_{F_j}\|}{2k\|v_{F_m}\|} \overline{B}_2\big(\innerp{v_{F_m}}{\bar{x}_G}t\big).
\end{equation}

\subsubsection{Computation of $c(G;t)$}

The expression
\[ \frac{1}{\innerp{\xi}{\xi}}\lp\frac{\innerp{\xi}{N_P(F_{1})}}{\innerp{\xi}{N_{F_1}(G)}} + \frac{\innerp{\xi}{N_P(F_{2})}}{\innerp{\xi}{N_{F_2}(G)}}\rp \]
becomes simpler if we write $N_P(F_1)$, $N_P(F_2)$, and $\xi$ in terms of $N_{F_1}(G)$ and $N_{F_2}(G)$. 

From~\eqref{eq:unit-vectors}, we obtain
\begin{align}\label{eq:npf}
\begin{aligned}
N_P(F_1) &= \frac{-c_G}{\sqrt{1-c_G^2}} N_{F_1}(G) + \frac{1}{\sqrt{1-c_G^2}}N_{F_2}(G), \text{ and}\\
N_{P}(F_2) &= \frac{1}{\sqrt{1-c_G^2}} N_{F_1}(G) + \frac{-c_G}{\sqrt{1-c_G^2}}N_{F_2}(G).
\end{aligned}
\end{align}

To write $\xi \in \Lambda_G$ as a combination of $N_{F_1}(G)$ and $N_{F_2}(G)$, write $\xi = A N_{F_1}(G) + B N_{F_2}(G)$, take inner-products with $N_{F_1}(G)$ and $N_{F_2}(G)$ and solve a linear system to obtain:
\begin{multline}\label{eq:xi-npfj}
\xi = \lp \frac{\innerp{\xi}{N_{F_1}(G)} - c_G\innerp{\xi}{N_{F_2}(G)}}{1-c_G^2}\rp N_{F_1}(G)\\
+ \lp\frac{-c_G \innerp{\xi}{N_{F_1}(G)} + \innerp{\xi}{N_{F_2}(G)}}{1-c_G^2}\rp N_{F_2}(G).
\end{multline}

Next we add the two fractions
\[
\frac{\innerp{\xi}{N_P(F_{1})}}{\innerp{\xi}{N_{F_1}(G)}} + \frac{\innerp{\xi}{N_P(F_{2})}}{\innerp{\xi}{N_{F_2}(G)}}
= \frac{\innerp{\xi}{N_{F_2}(G)}\innerp{\xi}{N_P(F_{1})} + \innerp{\xi}{N_{F_1}(G)}\innerp{\xi}{N_P(F_{2})}}{\innerp{\xi}{N_{F_1}(G)}\innerp{\xi}{N_{F_2}(G)}},
\]
and substitute~\eqref{eq:npf},
\[
 = \frac{\innerp{\xi}{N_{F_1}(G)}^2 + 2c_G\innerp{\xi}{N_{F_1}(G)}\innerp{\xi}{N_{F_2}(G)} + \innerp{\xi}{N_{F_2}(G)}^2}{\sqrt{1-c_G^2}\innerp{\xi}{N_{F_1}(G)}\innerp{\xi}{N_{F_2}(G)}}.
\]
Substituting~\eqref{eq:xi-npfj} into $\innerp{\xi}{\xi}$, we get that the numerator of the last expression is $(1-c_G^2)\innerp{\xi}{\xi}$, hence
\[ \frac{1}{\innerp{\xi}{\xi}}\lp\frac{\innerp{\xi}{N_P(F_{1})}}{\innerp{\xi}{N_{F_1}(G)}} + \frac{\innerp{\xi}{N_P(F_{2})}}{\innerp{\xi}{N_{F_2}(G)}}\rp 
=  \frac{\sqrt{1-c_G^2}}{\innerp{\xi}{N_{F_1}(G)}\innerp{\xi}{N_{F_2}(G)}}.
\]

Substituting this into the definition of $c(G;t)$,
\[
c(G;t) = \lim_{\epsilon \to 0^+} \frac{\sqrt{1-c_G^2}\det(\Lambda_G)}{(-2\pi i)^2}\sum_{\xi \in \Lambda_G \setminus (\Lambda_{F_1} \cup \Lambda_{F_2})} \frac{e^{-2\pi i \innerp{\xi}{t\bar{x}_G}}}{\innerp{\xi}{N_{F_1}(G)}\innerp{\xi}{N_{F_2}(G)}}\hat{\phi}_\epsilon(\xi).
\]

This expression is similar to $L_\Lambda(W,e;x)$, that was considered in Section~\ref{sec:gl-sums}, however to use it we scale $N_{F_1}(G)$ and $N_{F_2}(G)$ to $v_{F_1, G}$ and $v_{F_2, G}$ to have vectors in the lattice $\Lambda_G^*$. Let $W$ be the matrix with $v_{F_1, G}$ and $v_{F_2, G}$ as columns. Then
\[c(G;t) = \sqrt{1-c_G^2}\det(\Lambda_G)\|v_{F_1, G}\|\|v_{F_2,G}\| L_{\Lambda_G}(W,(1,1); t\bar{x}_G).\]

Applying Theorem~\ref{cor:lattice-sum2}, we get
\[
c(G;t) = \frac{\sqrt{1-c_G^2} \|v_{F_1, G}\|\|v_{F_2,G}\|}{\det(W^\trans W)^{1/2}} \sum_{n \in \Lambda_G^* \cap P_{W, t \bar{x}_G}} 
\Bcal_{1,1}\big(W^{+}(n - t\bar{x}_G)\big)\omega_{P_{W, t\bar{x}_G}}(n),
\]
where $P_{W, t\bar{x}_G} := t\bar{x}_G + W[0,1]^2$ and $W^{+} := (W^\trans W)^{-1}W^\trans$ is the pseudoinverse of~$W$. Noting that
\begin{multline*}
\det(W^\trans W) = \|v_{F_1,G}\|^2\|v_{F_2,G}\|^2 - \innerp{v_{F_1,G}}{v_{F_2,G}}^2 = \|v_{F_1,G}\|^2\|v_{F_2,G}\|^2(1 - c_G^2),
\end{multline*}
we get
\begin{equation}\label{eq:cGt}
c(G;t) = \sum_{n \in \Lambda_G^* \cap P_{W,t \bar{x}_G}} \Bcal_{1,1}\big(W^{+}(n - t\bar{x}_G)\big)\omega_{P_{W,t\bar{x}_G}}(n).
\end{equation}

We now treat separately the terms in the boundary and in the interior of $P_{W, t \bar{x}_G}$.

\subsubsection{Terms in the boundary of $P_{W, t \bar{x}_G}$}\label{sec:boundary-points}

Since $P_{W, t \bar{x}_G}$ is a $2$-dimensional parallelepiped, if $n \in \Lambda_G^* \cap \partial P_{W, t \bar{x}_G}$, then $n$ is either in an edge or is a vertex of it.

\begin{figure}[t]
\begin{tikzpicture}
\fill[blue!45] (0,0) -- (2,1) -- (3,3) -- (1,2) -- cycle;
\draw[very thick, blue!80!black, ->] (0,0) -- (2,1) node[below = 5, blue!40!black] {$v_{F_1,G}$};
\draw[very thick, blue!80!black, ->] (0,0) -- (1,2) node[below left = 1, blue!40!black] {$v_{F_2,G}$};

\fill[blue!45] (3.2,.8) -- (5.2,1.8) -- (6.2,3.8) -- (4.2,2.8) -- cycle;
\filldraw[blue!40!black, fill = white] (3.2,.8) circle [radius = 1pt];
\draw[very thick, blue!80!black] (3.5,.95) arc (26.6: 63.4: 0.34);
\node[below, blue!40!black] at (3.2,.8) {$t\bar{x}_G$};
\draw[thick, blue!80!black, ->] (4.4,.8) .. controls (4.4,1) and (3.8,1.4) .. (3.46,1.06);
\node[below, blue!40!black] at (4.5,.85) {$\frac{\arccos(c_G)}{2\pi}$};

\draw (-1.3, 0) -- (7.3, 0) (0, -1.3) -- (0, 4.3);
\foreach \x in {-1, ..., 7}
\foreach \y in {-1, ..., 4}
\fill[black!60] (\x, \y) circle [radius = 1pt];

\foreach \p in {(4,2), (5,2), (5,3), (0,0), (1,1), (2,1), (1,2), (2,2), (3,3)}
{\fill[blue!40!black] \p circle [radius = 1pt];}; 
\end{tikzpicture}
\caption{The parallelepiped $P_{W, t\bar{x}_G}$ and the solid angle at its vertex.}
\end{figure}

If it is in an edge, say $n = t \bar{x}_G + p v_{F_1,G}$, with $0 < p < 1$, then since $v_{F_2, G} \in \Lambda_G^*$, we have that $n + v_{F_2, G}$ is in the middle of the opposite edge. Since both solid angles are equal to $1/2$, $n$ contributes to the sum with $B_1(p)B_1(0)/2$ and $n + v_{F_2, G}$ contributes with $B_1(p)B_1(1)/2$. Since $B_1(0) = -B_1(1)$, both terms cancel each other in the sum. The same situation happens in the edges spanned by $v_{F_2, G}$. Hence there is no contribution from the points in the edges.

If $n$ is a vertex of $P_{t \bar{x}_G}$, then, since $v_{F_1, G}, v_{F_2,G} \in \Lambda_G^*$, all four vertices are points from $\Lambda_G^*$ and contribute to the sum. Since $B_1(0)B_1(0) = B_1(1)B_1(1) = 1/4$ and $B_1(0)B_1(1) = -1/4$, it rests to compute the solid angles at the vertices.

Since the unit vectors in the directions of $v_{F_1, G}, v_{F_2,G}$ are $N_{F_1}(G)$ and $N_{F_2}(G)$ and $\innerp{N_{F_1}(G)}{N_{F_2}(G)} = c_G$, we have that $\omega_{P_{W,t\bar{x}_G}}(t\bar{x}_G) = \arccos(c_G)/(2\pi)$ and the solid angle at the other vertex is $(\pi - \arccos(c_G))/(2\pi)$.

The contribution of the four vertices becomes
\[
\frac{2}{4}\frac{\arccos(c_G) - (\pi - \arccos(c_G))}{2\pi} 
= \frac{\arccos(c_G)}{2\pi} -\frac{1}{4}
= \omega_P(G) -\frac{1}{4}.
\]

Since the condition for having the four vertices in $\Lambda_G^*$ is $t \bar{x}_G \in \Lambda_G^*$, the boundary lattice points of $P_{W, t \bar{x}_G}$ contributes with
\begin{equation}\label{eq:boundary}
\sum_{n \in \Lambda_G^* \cap \partial P_{W,t \bar{x}_G}} \hspace*{-.3cm} \Bcal_{1,1}(W^{+}(n - t\bar{x}_G))\omega_{P_{W,t\bar{x}_G}}(n) 
=\lp \omega_P(G) -\frac{1}{4} \rp \bm{1}_{\Lambda_G^*}(t \bar{x}_G)
\end{equation}
to the sum~\eqref{eq:cGt}. Note that this is the only term where nontrivial solid angles actually appear.

\subsubsection{Terms in the interior of $P_{W, t \bar{x}_G}$}\label{sec:interior-points}

For the terms in sum~\eqref{eq:cGt} that are in the interior of $P_{W, t \bar{x}_G}$, we
introduce a basis for the lattice $\Lambda_G^*$ and write $n$ in terms of it to recognize a Dedekind-Rademacher sum, as defined in~\eqref{eq:dedekind-rademacher}. 

Since $v_{F_1,G}$ is a $\Lambda_G^*$-primitive vector, we can set $v_1 := v_{F_1,G}$ and find $v_2 \in \Lambda_G^*$ such that $\{v_1, v_2\}$ is a basis for the lattice $\Lambda_G^*$. 
Letting $V$ be the matrix with $v_1$ and $v_2$ as columns, we have that $W = VA$ with $A = \lp\begin{smallmatrix} 1 & h\\ 0 & k \end{smallmatrix}\rp$ and $h, k$ coprime integers. By the choice of~$v_2$, we may assume that $k$ is positive and $0 \leq h < k$.

Now make the change of variables $n \mapsto Vn'$, so that $n'$ lies in $V^{+}\Lambda_G^* = \Z^2$:
\[\sum_{n \in \Lambda_G^* \cap \innt(P_{W, t \bar{x}_G})} \Bcal_{1,1}(W^{+}(n - t\bar{x}_G))
= \sum_{n \in \Z^2 \cap \innt(V^{+}P_{W, t \bar{x}_G})} \Bcal_{1,1}(A^{-1}n - tW^{+}\bar{x}_G) .\]

We compute $A^{-1} = \lp\begin{smallmatrix} 1 & -h/k\\ 0 & 1/k \end{smallmatrix}\rp$. Also recalling that $W^{+}\bar{x}_G =: \lp\begin{smallmatrix}
x_1\\ x_2 \end{smallmatrix}\rp$ and noting that $n = \lp\begin{smallmatrix}
n_1\\ n_2 \end{smallmatrix}\rp \in \innt(V^{+}P_{W, t \bar{x}_G}) \Leftrightarrow A^{-1}n - tW^{+}\bar{x}_G \in (0,1)^2$, we have:
\[
\left\{\begin{matrix}
0 < n_1 - \frac{h}{k}n_2 - tx_1 < 1,\\ 
0 < \frac{n_2}{k} - t x_2 < 1
\end{matrix}\right.
\Leftrightarrow
\left\{\begin{matrix}
tx_1 + \frac{h}{k}n_2 < n_1 < tx_1 + \frac{h}{k}n_2 + 1,\\ 
tx_2 k < n_2 < tx_2 k + k.
\end{matrix}\right.
\]
Therefore $n_2$ varies over all residues modulo $k$ and for each $n_2$ we have only one integer $n_1$ (except in the boundary cases $tx_2k \in \Z$ and $tx_1 + \frac{h}{k}n_2 \in \Z$, however the following stays true, since $\overline{B}_1(x) = 0$ for $x \in \Z$). Thus,
\begin{align*}
 \sum_{n \in \Z^2 \cap \innt(V^{+}P_{W, t \bar{x}_G})} &\Bcal_{1,1}(A^{-1}n - tW^{+}\bar{x}_G)\\
 &= -\sum_{r\hspace*{-.2cm}\mod k} \overline{B}_1\lp \frac{r}{k} - tx_2\rp \overline{B}_1\lp \frac{h}{k}r + tx_1\rp\\
 &= -\sum_{r\hspace*{-.2cm}\mod k} \overline{B}_1\lp \frac{r-tkx_2}{k} \rp \overline{B}_1\lp h\frac{r - tkx_2}{k} + t(x_1 + h x_2)\rp\\
 &= -s\big(h,k; t(x_1 + hx_2), -tkx_2\big),
\end{align*}
where in the first equality we use that $\overline{B}_1(x)$ is periodic and odd. In the last equality we recognize~\eqref{eq:dedekind-rademacher}. Hence the interior lattice points of $P_{W, t \bar{x}_G}$ contributes with
\begin{equation}\label{eq:interior}
\sum_{n \in \Lambda_G^* \cap \innt(P_{W, t \bar{x}_G})}\hspace*{-.5cm} \Bcal_{1,1}(W^{+}(n - t\bar{x}_G))\omega_{P_{W,t\bar{x}_G}}(n) 
= -s\big(h,k; (x_1 + hx_2)t, -kx_2t\big)
\end{equation}
to the sum~\eqref{eq:cGt}.
Finally, substituting~\eqref{eq:bjGt}, \eqref{eq:boundary}, and \eqref{eq:interior} into~\eqref{eq:ad2t}, we obtain the expression in the statement of Theorem~\ref{thm:ad2-formula}. \qed

\bigskip

Next, we prove Corollary \ref{cor:APt-int}.

\corAPtint* 

\begin{proof}
The formula from Theorem~\ref{thm:ad2-formula} for $a_{d-2}(t)$ is:
\begin{multline*}
a_{d-2}(t) = \hspace*{-.3cm}\sum_{\substack{G \subset P,\\ \dim G = d-2}}\hspace*{-.3cm} \vol^*(G)
\bigg[
\frac{c_G}{2k}
\bigg(\frac{\|v_{F_2}\|}{\|v_{F_1}\|} \overline{B}_2\big(\innerp{v_{F_1}}{\bar{x}_G} t \big) 
+ \frac{\|v_{F_1}\|}{\|v_{F_2}\|} \overline{B}_2\big(\innerp{v_{F_2}}{\bar{x}_G}t\big)\bigg)\\
+ \lp \omega_P(G) -\frac{1}{4}\rp \bm{1}_{\Lambda_G^*}\lp t\bar{x}_G \rp
- s\big(h,k; (x_1+hx_2)t, -kx_2t\big)
\bigg].
\end{multline*}

Since now we are assuming that $P$ is an integer polytope, all its faces have integer points and since $t$ is an integer, we have that $\innerp{v_F}{\bar{x}_G}t$ is an integer and thus both occurrences of $\overline{B}_2$ evaluate to $1/6$. The first term becomes
\[ \frac{c_G}{12k}\lp \frac{\|v_{F_2}\|}{\|v_{F_1}\|}+\frac{\|v_{F_1}\|}{\|v_{F_2}\|}\rp. \]

Letting $W$ be the matrix with $v_{F_1, G}$ and $v_{F_2, G}$ as columns and $V$ being the matrix with the lattice basis $v_1, v_2$ of $\Lambda_G^*$ as columns, recall that $\lp\begin{smallmatrix} x_1\\ x_2 \end{smallmatrix}\rp := W^{+}\bar{x}_G$ and $W^{+} = A^{-1}V^+$, where $A^{-1} = \lp\begin{smallmatrix} 1 & -h/k\\ 0 & 1/k \end{smallmatrix}\rp$. Since $\bar{x}_G = \Proj_{\lin(G)^\perp}(x_G)$ and $x_G$ can be chosen as an integer vector in the face $G$, $\bar{x}_G \in \Lambda_G^*$ (by Lemma~\ref{lm:dual-lattice}) and $V^+\bar{x}_G \in \Z^2$. Hence $\lp\begin{smallmatrix} x_1\\ x_2 \end{smallmatrix}\rp = \lp\begin{smallmatrix} 1 & -h/k\\ 0 & 1/k \end{smallmatrix}\rp \lp\begin{smallmatrix} n_1\\ n_2 \end{smallmatrix}\rp = \lp\begin{smallmatrix} n_1 - hn_2/k\\ n_2/k \end{smallmatrix}\rp$. Thus $x_1 + hx_2 = n_1 \in \Z$ and $kx_2 = n_2 \in \Z$ so the Dedekind-Radamacher sum $s\big(h,k;(x_1+hx_2)t,-kx_2t\big)$ becomes the Dedekind sum $s(h,k)$. Similarly, since $t\bar{x}_G \in \Lambda_G^*$, $\bm{1}_{\Lambda_G^*}\lp t\bar{x}_G \rp$ evaluates to $1$. 
\end{proof}

\bigskip
\section{Obtaining the Ehrhart quasi-coefficients $e_{d-1}(t)$ and $e_{d-2}(t)$}~\label{sec:Ehrhart}

In this section we show how the Ehrhart quasi-polynomial can be obtained from the solid angle sum quasi-polynomial by means of a limit process and we show that this relation also extends to the quasi-coefficients. As a result we obtain local formulas for the quasi-coefficients $e_{d-1}(t)$ and $e_{d-2}(t)$ for all positive real values of~$t$. The technique used here is an adaptation of a method used by Barvinok~\cite{barvinok06} for a similar purpose, but instead of giving finite formulas, he focuses in determining the algorithmic complexity of computing $e_{d-k}(t)$ for a fixed $k$.

Since we are dealing with different polytopes in this section, we modify the notation and write $e_k(P;t)$ and $a_k(P;t)$ in place of $e_k(t)$ and $a_k(t)$ for the quasi-coefficients of $L_P(t)$ and $A_P(t)$ respectively.

Let $P, R \subset \R^d$ be $d$-dimensional rational polytopes. We introduce the {\bf shifted solid angle sum} 
\[A_{P,R}(t) := \sum_{x \in \Z^d} \omega_{tP+R}(x),\]
where the ``$+$'' stands for the Minkowski sum $P+R := \{x + y : x \in P, y \in R\}$. 
Since the function $\varphi(P) := \sum_{x \in \Z^d} \omega_{P+R}(x)$ is a valuation\footnote{ I.e., satisfies $\varphi(P) + \varphi(Q) = \varphi(P \cup Q) + \varphi(P \cap Q)$ whenever $P \cup Q$ is a polytope.} on rational polytopes, McMullen~\cite{mcmullen78} shows that this shifted solid angle sum can also be expressed as a quasi-polynomial
\[A_{P,R}(t) = a_d(P,R;t)t^d + a_{d-1}(P,R;t)t^{d-1} + \dots + a_0(P,R;t),\] 
with period dividing the denominator of $P$, and hence {\em does not depending} on $R$, for integer values of $t$. Moreover, this expression can be extended to real values of~$t$ in the same manner than with the Ehrhart and solid angle sum expressions (c.f. Linke~\cite[Theorem 1.2]{linke11}). Thus, if $m$ is the denominator of $P$, we have ${a_{k}(P,R;t + m)} = a_{k}(P,R;t)$ for all $0 \leq k \leq d$ and $t \in \R$, $t > 0$.

\begin{theorem}\label{thm:lim-Ap}
Let $P \subset \R^d$ be a $d$-dimensional rational polytope and $a \in \innt(P)$ be a rational vector. Then pointwise for any positive real $t$,
\begin{equation*}
L_P(t) = \lim_{\tau \to 0^+} A_{P, \tau (P-a)}(t).
\end{equation*}
Furthermore,
\[e_k(P;t) = \lim_{\tau \to 0^+} a_k(P, \tau (P-a); t)\]
pointwise for all $0 \leq k \leq d$ and positive real $t$.
\end{theorem}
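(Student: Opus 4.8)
The plan is to prove the two statements in turn, deriving the coefficient identity from the scalar identity by Vandermonde interpolation. The algebraic fact driving the first identity is that the Minkowski sum collapses to a single dilated-and-translated copy of $P$: for every $\tau \ge 0$,
\[
tP + \tau(P-a) = (t+\tau)P - \tau a =: Q_\tau,
\]
so that $A_{P,\tau(P-a)}(t) = \sum_{x \in \Z^d}\omega_{Q_\tau}(x)$, and it suffices to track $\omega_{Q_\tau}(x)$ at each integer point as $\tau \to 0^+$.

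First I would establish the pointwise limit $\omega_{Q_\tau}(x) \to \bm{1}_{tP}(x)$ for every $x \in \Z^d$, splitting into three cases according to the position of $x$ relative to $tP$. For $x \in \innt(tP)$ and for $x \notin tP$, a small perturbation keeps $x$ in the interior (resp. the complement) of $Q_\tau$, so $\omega_{Q_\tau}(x)$ equals $1$ (resp. $0$) for all small $\tau$, matching $\bm{1}_{tP}(x)$. The decisive case is $x \in \partial(tP)$: writing $x = ty$ with $y \in \partial P$, we have $x \in \innt(Q_\tau)$ iff $(x+\tau a)/(t+\tau) = \frac{t}{t+\tau}y + \frac{\tau}{t+\tau}a \in \innt(P)$, which is a convex combination of the boundary point $y$ with the interior point $a$ carrying positive weight $\tau/(t+\tau)$. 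By convexity of $P$ this combination lies in $\innt(P)$ for every $\tau > 0$, so $\omega_{Q_\tau}(x) = 1$ throughout and the limit is $1 = \bm{1}_{tP}(x)$. This is precisely where the hypothesis $a \in \innt(P)$ enters, and it is the main obstacle: without it a boundary point could remain on the boundary and contribute a fractional weight. Since for $\tau \in (0,1]$ all the sets $Q_\tau$ lie inside one fixed compact set, only finitely many integer points ever contribute, so the limit exchanges with the (uniformly finite) sum, giving $\lim_{\tau\to0^+}A_{P,\tau(P-a)}(t) = \sum_{x\in\Z^d}\bm{1}_{tP}(x) = L_P(t)$.

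For the quasi-coefficients I would pass from values to coefficients by interpolation. Fix $t_0 > 0$ and let $m$ be the denominator of $P$. Both quasi-coefficient families are $m$-periodic in the dilation parameter: for $a_k(P,R;\cdot)$ this is recorded above, and for $e_k(P;\cdot)$ it is Linke's extension to real dilates. Evaluating at the $d+1$ distinct nodes $s_j := t_0 + jm$, $j = 0,\dots,d$, periodicity freezes the quasi-coefficients, so
\[
A_{P,\tau(P-a)}(s_j) = \sum_{k=0}^d a_k(P,\tau(P-a);t_0)\,s_j^{\,k}, \qquad L_P(s_j) = \sum_{k=0}^d e_k(P;t_0)\,s_j^{\,k}.
\]
These are two linear systems with the same Vandermonde matrix $V = (s_j^{\,k})_{j,k}$, which is invertible because the $s_j$ are distinct. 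The first part of the theorem gives $A_{P,\tau(P-a)}(s_j) \to L_P(s_j)$ as $\tau \to 0^+$ for each $j$; applying the fixed matrix $V^{-1}$ to both value-vectors then yields $a_k(P,\tau(P-a);t_0) \to e_k(P;t_0)$ for every $k$, as desired.
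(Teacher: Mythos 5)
Your proposal is correct and follows essentially the same route as the paper: the first identity rests on the observation that the outward perturbation $tP + \tau(P-a)$ absorbs the boundary lattice points of $tP$ into its interior while, by discreteness of $\Z^d$, capturing no new ones for small $\tau$, and the coefficient identity is then extracted exactly as in the paper by evaluating at the nodes $t, t+m, \dots, t+dm$ and inverting the Vandermonde matrix. Your treatment of the first part is somewhat more explicit (rewriting the Minkowski sum as $(t+\tau)P - \tau a$ and checking the three cases pointwise, including the convexity argument showing boundary points of $tP$ become interior points of $Q_\tau$), but this is a refinement of the same idea rather than a different argument.
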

\begin{proof}
Since $P-a$ is a polytope with the origin in its interior, for any $t$, $\tau > 0$ we have that $tP \subset tP + \tau(P-a)$. Further, since $\Z^d$ is discrete, for any fixed positive real $t$ and all sufficiently small $\tau$, 
\[|(tP + \tau(P-a)) \cap \Z^d| = |tP \cap \Z^d|\quad \text{and}\quad \partial(tP + \tau(P-a)) \cap \Z^d = \emptyset.\]
This establishes the first claim.

To see how the limit also holds for the quasi-coefficients, let $m$ be the denominator of $P$. Since $m$ is a period for both the Ehrhart and the shifted solid angle sum quasi-coefficients,
we have 
\[e_k(P, t + jm) = e_k(P, t)\quad \text{and}\quad a_k(P, \tau (P-a); t + jm) = a_k(P, \tau (P-a); t),\]
for any integer $j \geq 0$ and $0 \leq k \leq d$. Evaluating the equality for quasi-polynomials with $t, t+m, \dots, t + dm$, we get the $d+1$ equations
\begin{equation*}
\sum_{k = 0}^d e_k(P, t)(t+jm)^k = \lim_{\tau \to 0^+} \sum_{k = 0}^d a_k(P, \tau (P-a); t)(t+jm)^k,\quad \text{for } j = 0, \dots, d.
\end{equation*}
Since the Vandermonde matrix $\big((t+jm)^k\big)_{j,k = 0}^d$ is invertible, these equations imply the equality for the quasi-coefficients.
\end{proof}

Theorem~\ref{thm:lim-Ap} gives a formula for $e_k(t)$ in terms of the quasi-coefficients of the shifted solid angle sum, however in Theorems~\ref{thm:ad1-formula} and~\ref{thm:ad2-formula} we have formulas for the solid angle sum quasi-coefficients without the shift. 
Next we adapt the proof of Theorem~\ref{thm:ak-formula} (from Diaz, Le, and Robins~\cite{diaz16}) where instead of considering the solid angle sum of the polytope $P$, we now consider the solid angle sum of the perturbed polytope $P + \tau(P-a)$ and we show that in the limit as $\tau \to 0^+$ 
both 
$\lim_{\tau \to 0^+} a_k(P, \tau (P-a); t)$ and $\lim_{\tau \to 0^+} a_k(P + \tau(P-a);t)$ 
are in fact the same.   

\begin{lemma} \label{lem:unshift}
Let $P \subset \R^d$ be a $d$-dimensional rational polytope and $a \in \innt(P)$ be a rational vector. Then pointwise for any positive real $t$,
\[
\lim_{\tau \to 0^+} a_k(P, \tau (P-a); t) = \lim_{\tau \to 0^+} a_k(P + \tau(P-a);t).
\]
Hence by Theorem~\ref{thm:lim-Ap} both expressions are equal to the Ehrhart quasi-coefficient $e_k(P;t)$.
\end{lemma}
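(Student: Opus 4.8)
The plan is to run the Fourier-analytic derivation behind Theorem~\ref{thm:ak-formula} (from Diaz, Le, and Robins) for both bodies $tP+\tau(P-a)$ and $t(P+\tau(P-a))$ simultaneously, exploiting the fact that $P$, the Minkowski sum $tP+\tau(P-a)=(t+\tau)P-\tau a$, and the dilate $t(P+\tau(P-a))=(t+t\tau)P-t\tau a$ all share the normal fan of $P$. Because of this, the three polytopes have identical face posets, the same chains $T$, the same admissible sets $S(T)$, and the same weights $W_{(F_{j-1},F_j)}(\xi)$; only the last face of each chain is dilated and translated, which affects exactly two quantities: the Euclidean volume of that face and the exponential weight $\Ecal_T$. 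First I would record, for a chain $T$ whose last face $F$ has dimension $k(T)$ and representative point $x_F\in F$, the chain's contribution in each case. Writing $E_T(\delta):=\vol(F)\lim_{\epsilon\to0^+}\sum_{\xi\in\Z^d\cap S(T)}\big(\prod_j W_{(F_{j-1},F_j)}(\xi)\big)e^{-2\pi i\innerp{\xi}{tx_F+\delta(x_F-a)}}\hat\phi_\epsilon(\xi)$, the two key identities are $(t+\tau)x_F-\tau a=tx_F+\tau(x_F-a)$ and $t\big((1+\tau)x_F-\tau a\big)=tx_F+t\tau(x_F-a)$; hence the $T$-contribution to the shifted sum is $(t+\tau)^{k(T)}E_T(\tau)$, while the $T$-contribution to $A_{P+\tau(P-a)}(t)$ is $(1+\tau)^{k(T)}t^{k(T)}E_T(t\tau)$.

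Next I would extract the quasi-coefficients. Since each $E_T(\delta)$ is, after summation, a periodic function of $t$ (it depends on $t$ only through the phase $tx_F$, so it has no polynomial growth), collecting the coefficient of $t^k$ gives $a_k(P,\tau(P-a);t)=\sum_{T:\,k(T)\ge k}\binom{k(T)}{k}\tau^{k(T)-k}E_T(\tau)$ for the shifted sum, and $a_k(P+\tau(P-a);t)=(1+\tau)^k\sum_{T:\,k(T)=k}E_T(t\tau)$ for the perturbed polytope (the latter directly from Theorem~\ref{thm:ak-formula}, where only chains of length $d-k$ appear). Letting $\tau\to0^+$ and using that each $E_T$ is bounded near $0$, every term with $k(T)>k$ carries a factor $\tau^{k(T)-k}\to0$ and drops out, while $(1+\tau)^k\to1$; thus both limits reduce to $\sum_{T:\,k(T)=k}$ of a one-sided limit of $E_T$, sampled along $\delta=\tau$ and along $\delta=t\tau$ respectively. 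This is the crux: $E_T$ is one and the same function of the single parameter $\delta>0$, the shifted sum evaluates it at $\delta=\tau$ and the perturbed polytope at $\delta=t\tau$, and since $t>0$ is fixed both sequences tend to $0^+$. Consequently, once $\lim_{\delta\to0^+}E_T(\delta)$ is known to exist, the two limits are automatically equal, and by Theorem~\ref{thm:lim-Ap} their common value is $e_k(P;t)$.

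It remains to establish existence of the one-sided limit $\lim_{\delta\to0^+}E_T(\delta)$ and to justify interchanging it with the regularizing limit in $\epsilon$ and the infinite sum over $\xi$; this is where I expect the real work to lie. I would evaluate $E_T(\delta)$ by the same route as in the proof of Theorem~\ref{thm:ad2-formula}, applying Theorem~\ref{thm:lattice-sum} and Theorem~\ref{cor:lattice-sum2} to turn the regularized lattice sum over $S(T)$ into a finite sum of periodized Bernoulli polynomials and solid angles evaluated at the phase $tx_F+\delta(x_F-a)$. As $\delta\to0^+$ this phase moves continuously and meets the relevant lattice hyperplanes only in the limit, so each term has a one-sided limit governed by $\operatorname{sign}\innerp{\xi}{x_F-a}$. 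The decisive geometric point is that $a\in\innt(P)$ forces $x_F-a\notin\lin(F)$, so $\Proj_{\lin(F)^\perp}(x_F-a)\neq0$: for the frequencies $\xi$ with $\innerp{\xi}{x_F-a}\neq0$ the perturbation resolves each resonance to a definite side, and that side is $\operatorname{sign}\innerp{\xi}{x_F-a}$ in \emph{both} cases because $\tau$ and $t\tau$ are both positive, whereas for the remaining frequencies (those with $\innerp{\xi}{x_F-a}=0$, finitely many up to periodicity) the phase is unperturbed and the two cases agree trivially. The main obstacle is thus the careful bookkeeping of these nongeneric frequencies together with the limit interchange; the reparametrization $\delta=\tau$ versus $\delta=t\tau$ is precisely what reduces this potentially delicate comparison to the single statement that $E_T$ admits a one-sided limit at $0$.
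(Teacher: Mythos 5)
Your skeleton matches the paper's proof almost exactly: the same chain decomposition via the combinatorial Stokes formula, the same observation that $tP+\tau(P-a)=(t+\tau)P-\tau a$ and $t(P+\tau(P-a))=(t+t\tau)P-t\tau a$ share all chains and weights with $P$, the same binomial expansion giving $a_k(P,\tau(P-a);t)=\sum_{l\ge k}\binom{l}{k}\tau^{l-k}\sum_{T:\,k(T)=l}E_T(\tau)$, and the same punchline that the two quasi-coefficients are the same function of a single parameter $\delta$ sampled along $\delta=\tau$ and $\delta=t\tau$ (the paper phrases this as the change of variables $\tau\mapsto t\tau$ inside the limit). Your identities for the exponential weights and the volume factors are all correct.

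The gap is in how you propose to supply the two analytic inputs: (i) boundedness of the chain sums near $\tau=0$, needed so that the terms with $k(T)>k$ actually vanish, and (ii) existence of the one-sided limit as $\delta\to 0^+$. You defer both to an explicit evaluation of every $E_T(\delta)$ via Theorems~\ref{thm:lattice-sum} and~\ref{cor:lattice-sum2}, but that program does not go through in general: the rational weights of a chain of length $l\ge 3$ have denominators $\|\Proj_{F_{j-1}}(\xi)\|^2$ that are genuinely quadratic, and the reduction to products of linear forms that makes the codimension-two case tractable (choosing an orthonormal basis of the two-dimensional space $\lin(G)^\perp$) has no simple analogue for longer chains; indeed the paper never evaluates these sums beyond codimension two. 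Both inputs can instead be obtained without any explicit evaluation. For (i), the paper bounds the coefficients $a_l(P,-\tau a;t)$ uniformly for $0<\tau<1$ and $0<t\le m$ by interpolating: the values $A_{P,-\tau a}(t+jm)$, $j=0,\dots,d$, are bounded, and the quasi-coefficients are obtained from them by applying the inverse of the Vandermonde matrix $\big((t+jm)^l\big)_{j,l}$, whose norm is continuous in $t$ on a compact interval. For (ii), once the $l>k$ terms are killed, the existence of $\lim_{\delta\to 0^+}\sum_{T:\,k(T)=k}E_T(\delta)$ follows for free from Theorem~\ref{thm:lim-Ap}, since that sum differs from $a_k(P,\tau(P-a);t)$ — which is already known to converge to $e_k(P;t)$ — only by those vanishing terms. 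With those two substitutions your argument closes; note also that you only ever need the limit of the sum over chains of fixed length, not of each individual $E_T$, and in particular the sign analysis of $\innerp{\xi}{x_F-a}$ at nongeneric frequencies is not required for this lemma (it only becomes relevant when one wants closed forms, as in Theorems~\ref{thm:ed1-formula} and~\ref{thm:ed2-formula}).
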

\begin{proof}
In this proof we follow closely the procedure from Diaz, Le, and Robins~\cite{diaz16}, revised in Section~\ref{sec:summary-diaz16}. For any $t, \tau > 0$ we write the shifted solid angle sum $A_{P, \tau (P-a)}(t)$ using~Lemma~\ref{thm:conv-series}, followed by Poisson summation (Theorem~\ref{thm:poisson}):
\begin{align*}
A_{P, \tau (P-a)}(t) &= \sum_{x \in \Z^d}\omega_{tP + \tau(P-a)}(x)\\
&= \lim_{\epsilon \to 0^+} \sum_{x \in \Z^d} (\bm{1}_{tP + \tau(P-a)} \ast \phi_{d, \epsilon})(x)\\
&= \lim_{\epsilon \to 0^+} \sum_{\xi \in \Z^d} \hat{\bm{1}}_{tP + \tau(P-a)}(\xi)\hat{\phi}_{\epsilon}(\xi)\\
&= (t+\tau)^d\lim_{\epsilon \to 0^+} \sum_{\xi \in \Z^d} e^{-2\pi i \innerp{\xi}{-\tau a}} \hat{\bm{1}}_{P} ((t+\tau)\xi) \hat{\phi}_{\epsilon}(\xi),
\end{align*}
in the last line we use $\hat{\bm{1}}_{tP + \tau(P-a)}(\xi) = (t+\tau)^d e^{-2\pi i \innerp{\xi}{-\tau a}} \hat{\bm{1}}_{P}\big((t+\tau)\xi\big)$, which can be proven by the change of variables $x \mapsto (t+\tau)x - \tau a$ in the integral.

Next we apply the combinatorial Stokes formula~\cite[Theorem 1]{diaz16} for $P$ and use the rational weights $\Rcal_T(\xi)$ defined in Section~\ref{sec:summary-diaz16}.
\begin{multline*}
A_{P, \tau (P-a)}(t) = (t+\tau)^d \lim_{\epsilon \to 0^+} \sum_{T} \sum_{\xi \in \Z^d \cap S(T)} \hspace{-.25cm}(t+\tau)^{-l(T)} \Rcal_T(\xi) e^{-2\pi i \innerp{\xi}{(t+\tau)x_{T} - \tau a}} \hat{\phi}_\epsilon(\xi),
\end{multline*}
where the outer sum is taken over all chains of $G_P$ and $x_T$ is any point from the last face of chain $T$.   Similarly as in Theorem~\ref{thm:ak-formula}, this leads to a formula for the coefficients of $A_{P, -\tau a}(t + \tau) $:
\[A_{P, \tau (P-a)}(t) := \sum_{x \in \Z^d} \omega_{tP + \tau (P-a)}(x) 
= A_{P, -\tau a}(t + \tau) = \sum_{k=0}^d a_k(P, -\tau a;t + \tau) (t+\tau)^k,\]
where
\[
a_k(P, -\tau a;t + \tau)  = \lim_{\epsilon \to 0^+}  \sum_{T :\, l(T) = d-k} \sum_{\xi \in \Z^d \cap S(T)}  \Rcal_T(\xi) e^{-2\pi i\innerp{\xi}{(t+\tau)x_{T} - \tau a}} \hat{\phi}_\epsilon(\xi).
\]
To get the quasi-coefficients $a_{k}(P,\tau(P-a); t)$, we expand $(t+\tau)^k$ and rearrange the terms:
\begin{align*}
A_{P, \tau (P-a)}(t) &= \sum_{l=0}^d a_l(P, -\tau a;t + \tau) (t+\tau)^l
= \sum_{l=0}^d a_l(P, -\tau a;t + \tau) \sum_{k = 0}^l\binom{l}{k}t^k\tau^{l-k}\\
&= \sum_{k=0}^d\lp \sum_{l = k}^d \binom{l}{k}a_l(P, -\tau a;t + \tau) \tau^{l-k}\rp t^k.
\end{align*}
Hence
\begin{equation}\label{eq:ak-al}
a_k(P, \tau(P-a); t) = \sum_{l = k}^d \binom{l}{k}a_l(P, -\tau a;t + \tau) \tau^{l-k}.
\end{equation}
Before considering the limit $\tau \to 0^+$, next we show that the quasi-coefficients $a_l(P, -\tau a;t + \tau)$ can be bounded for all $\tau < 1$ and $t > 0$. Indeed, let $0 < \tau < 1$ and replace $t$ by $t - \tau$ so that we just have $t$ in the argument. Let $m$ be the period of $P$, since $A_{P, -\tau a}(t)$ is a quasi-polynomial with period $m$, we may assume $0 < t \leq m$. Evaluate $A_{P, -\tau a}(t)$ replacing $t$ by $t, t + m, \dots, t+dm$ to obtain $d+1$ equations 
\[A_{P, -\tau a}(t + jm) = \sum_{l = 0}^d a_l(P, -\tau a; t)(t + jm)^l\quad \text{for } 0 \leq j \leq d.\] 
Since the interpolation which sends the $d+1$ values $\big(A_{P, -\tau a}(t + jm)\big)_{j=0}^d$ to the coefficients $\big(a_l(P, -\tau a; t)\big)_{l = 0}^d$ is a linear transformation with matrix equal to the inverse of $\big((t+jm)^l\big)_{j,l = 0}^d$ and since its norm is a continuous function on $t$, it can be bounded for $0 \leq t \leq m$. Furthermore, the value $A_{P, -\tau a}(t + dm)$ is bounded for $\tau < 1$ and $0 < t \leq m$, thus the coefficients $a_l(P, -\tau a; t)$ are also bounded, as we claimed.

Now we fix a $t > 0$ and consider the limit $\tau \to 0^+$ in \eqref{eq:ak-al}. Since $|a_l(P, -\tau a;t + \tau)|$ is bounded independently on $t$ and $\tau < 1$, all terms with $l > k$ vanish as $\tau \to 0^+$ and we get
\begin{align*}
\lim_{\tau \to 0^+} a_{k}(P, \tau (P&-a); t) = \lim_{\tau \to 0^+} a_k(P, -\tau a;t + \tau)\\
&= \lim_{\tau \to 0^+} \lim_{\epsilon \to 0^+} \hspace{-.1cm} \sum_{T :\, l(T) = d-k} \sum_{\xi \in \Z^d \cap S(T)} \hspace{-.1cm} \Rcal_T(\xi) e^{-2\pi i\innerp{\xi}{(t+\tau)x_{T} - \tau a}} \hat{\phi}_\epsilon(\xi)\\
&= \lim_{\tau \to 0^+} \lim_{\epsilon \to 0^+} \hspace{-.1cm} \sum_{T :\, l(T) = d-k} \sum_{\xi \in \Z^d \cap S(T)} \hspace{-.1cm} \Rcal_T(\xi) e^{-2\pi i\innerp{t\xi}{(1+\tau)x_{T} - \tau a}} \hat{\phi}_\epsilon(\xi),
\end{align*}
where in the last step we make the change of variables $\tau \mapsto t\tau$ in the limit.

On the other hand, we may compute a expression for $a_{k}(P + \tau(P-a);t)$ using the original formula from Theorem~\ref{thm:ak-formula}, but with the polytope $P + \tau(P-a)$ instead of the polytope $P$. The chains of both polytopes can be identified, since the transformation $P \mapsto P + \tau(P-a)$ is a dilation followed by a translation. The rational weight gets multiplied by $(1+\tau)^{d - l(T)}$ due to the dilation of the faces and the fact that the weights $W_{(F_{j-1},F_j)}(\xi)$ only depend on the cone of feasible directions $\fcone(F_{j-1},F_j)$. The exponential weight becomes $e^{-2\pi i \innerp{\xi}{x_T + \tau(x_T - a})}$, thus

\begin{align*}
\lim_{\tau \to 0^+} a_k&(P + \tau(P-a); t)\\ 
&= \lim_{\tau \to 0^+} \lim_{\epsilon \to 0^+} \hspace{-.1cm} \sum_{T :\, l(T) = d-k} \sum_{\xi \in \Z^d \cap S(T)} \hspace{-.1cm} (1 + \tau)^{d-k} \Rcal_T(\xi) e^{-2\pi i\innerp{t\xi}{(1+\tau)x_{T} - \tau a}} \hat{\phi}_\epsilon(\xi)\\
&= \lim_{\tau \to 0^+} \lim_{\epsilon \to 0^+} \hspace{-.1cm} \sum_{T :\, l(T) = d-k} \sum_{\xi \in \Z^d \cap S(T)} \hspace{-.1cm} \Rcal_T(\xi) e^{-2\pi i\innerp{t\xi}{(1+\tau)x_{T} - \tau a}} \hat{\phi}_\epsilon(\xi),
\end{align*}
where we simply have taken the factor $(1+\tau)^{d-k}$ out and used the product rule of limits. The lemma follows since we obtained the same formula for both limits.
\end{proof}

With Lemma~\ref{lem:unshift} and Theorems~\ref{thm:ad1-formula} and~\ref{thm:ad2-formula}, we can produce formulas for $e_{d-1}(t)$ and $e_{d-2}(t)$ for all real $t > 0$. We recall the one-sided limits
\[ \overline{B}^+_1(x) := \lim_{\epsilon \to 0^+} \overline{B}_1(x + \epsilon) 
\quad\text{ and }\quad
\overline{B}^-_1(x) := \lim_{\epsilon \to 0^+} \overline{B}_1(x - \epsilon), \]
that differ from $\overline{B}_1(x)$ only at integer points ($\overline{B}^+_1(x) = - 1/2$ and $\overline{B}^-_1(x) = 1/2$ for $x \in \Z$).

\begin{theorem}\label{thm:ed1-formula}
Let $P$ be a full-dimensional rational polytope in $\R^d$. Then  for all positive real values of~$t$,
the codimension one quasi-coefficient of the Ehrhart function $L_P(t)$ has the following finite form:
\[ e_{d-1}(t) = -\hspace{-.3cm}\sum_{\substack{F \subset P,\\ \dim(F) = d-1}}\hspace{-.3cm} \vol^*(F)\overline{B}^+_1\big(\innerp{v_F}{x_F}t\big),\]
where $x_F$ is any point in $F$ and $v_F$ is the primitive integer vector in the direction of $N_P(F)$.
\end{theorem}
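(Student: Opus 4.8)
The plan is to combine Lemma~\ref{lem:unshift} with the codimension one formula of Theorem~\ref{thm:ad1-formula}, applied not to $P$ itself but to a small dilation of $P$ about the interior point $a$, and then to let the dilation parameter tend to zero. By Lemma~\ref{lem:unshift} we have $e_{d-1}(P;t) = \lim_{\tau \to 0^+} a_{d-1}(P + \tau(P-a);t)$, so it suffices to understand the solid angle quasi-coefficient of the perturbed polytope. Since $P$ is convex, $P + \tau(P-a) = (1+\tau)P - \tau a$, which is precisely the dilation of $P$ centered at $a$ by the factor $1+\tau$; call it $P_\tau$.

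First I would set up the facet correspondence. As $P_\tau$ is the image of $P$ under the affine map $x \mapsto a + (1+\tau)(x-a)$, its facets are exactly $F_\tau := a + (1+\tau)(F-a)$ as $F$ ranges over the facets of $P$. This map preserves linear hulls, so $\lin(F_\tau) = \lin(F)$, the lattice $\Lambda_{F_\tau} = \Lambda_F$ is unchanged, and the primitive integer outer normal is preserved, $v_{F_\tau} = v_F$. The $(d-1)$-dimensional volume scales as $\vol(F_\tau) = (1+\tau)^{d-1}\vol(F)$, whence $\vol^*(F_\tau) = (1+\tau)^{d-1}\vol^*(F)$, a factor tending to $1$ as $\tau \to 0^+$.

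Next I would evaluate the argument of the periodized Bernoulli polynomial. Choosing the representative point $x_{F_\tau} := a + (1+\tau)(x_F - a)$ for a fixed $x_F \in F$, one computes
\[\innerp{v_F}{x_{F_\tau}}t = \innerp{v_F}{x_F}t + \tau\big(\innerp{v_F}{x_F} - \innerp{v_F}{a}\big)t.\]
Here is the key geometric input: since $a \in \innt(P)$ lies strictly on the inner side of the supporting hyperplane $\{x : \innerp{v_F}{x} = \innerp{v_F}{x_F}\}$ of the facet $F$, we have $\innerp{v_F}{a} < \innerp{v_F}{x_F}$. Hence for every $t > 0$ the correction term is strictly positive, so as $\tau \to 0^+$ the argument decreases to $\innerp{v_F}{x_F}t$ strictly from above. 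Plugging the facet data into Theorem~\ref{thm:ad1-formula} gives
\[a_{d-1}(P_\tau;t) = -\sum_{\substack{F \subseteq P,\\ \dim(F) = d-1}} (1+\tau)^{d-1}\vol^*(F)\,\overline{B}_1\big(\innerp{v_F}{x_{F_\tau}}t\big),\]
and letting $\tau \to 0^+$ the prefactor tends to $1$ while, by the one-sided approach just established, $\overline{B}_1(\innerp{v_F}{x_{F_\tau}}t) \to \overline{B}^+_1(\innerp{v_F}{x_F}t)$, which yields the claimed formula.

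The main obstacle is the discontinuity of $\overline{B}_1$ at the integers: the naive value $\overline{B}_1(\innerp{v_F}{x_F}t)$ would be wrong exactly when $\innerp{v_F}{x_F}t \in \Z$, and the entire content of the theorem is that the perturbation selects the correct one-sided value $\overline{B}^+_1$. Making this rigorous requires only the sign computation showing that $a$ being interior forces the approach from the right, which is precisely why the shift is taken in the direction $P-a$ rather than an arbitrary direction. Everything else---the facet bijection, the scaling of relative volumes, and the invariance of $v_F$ and $\Lambda_F$ under the dilation---is routine.
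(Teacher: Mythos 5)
Your proof is correct and follows essentially the same route as the paper: apply Lemma~\ref{lem:unshift} to reduce to $\lim_{\tau\to 0^+} a_{d-1}(P+\tau(P-a);t)$, use Theorem~\ref{thm:ad1-formula} on the dilated polytope, and observe that $\innerp{v_F}{x_F-a}>0$ forces the argument of $\overline{B}_1$ to approach $\innerp{v_F}{x_F}t$ from the right, producing $\overline{B}^+_1$. The paper's proof is exactly this, stated more tersely; your additional detail on the facet correspondence and volume scaling is consistent with it.
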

\begin{proof}
We have from Theorem~\ref{thm:ad1-formula} the formula for $a_{d-1}(t)$,
\[ a_{d-1}(P;t)\\ = -\hspace{-.3cm}\sum_{\substack{F \subset P,\\ \dim(F) = d-1}}\hspace{-.3cm} \vol^*(F)\overline{B}_1\big(\innerp{v_F}{x_F}t\big).\]
We use the formula from Lemma~\ref{lem:unshift} for $e_{d-1}(P;t)$ and observe that the effect of replacing the polytope $P$ by $P+\tau(P-a)$ is replace $F$ by $(1+\tau)F$ inside the relative volume and replace $x_F$ by $x_F + \tau(x_F - a)$. We get
\begin{align*}
e_{d-1}(P;t) &= \lim_{\tau \to 0^+} a_{d-1}(P + \tau(P-a);t)\\ &=
- \lim_{\tau \to 0^+} \hspace{-.3cm}\sum_{\substack{F \subset P,\\ \dim(F) = d-1}}\hspace{-.3cm} \vol^*\big((1+\tau)F\big) \overline{B}_1\big(\innerp{v_F}{x_F + \tau(x_F-a)}t\big)\\
&= -\hspace{-.3cm}\sum_{\substack{F \subset P,\\ \dim(F) = d-1}}\hspace{-.3cm} \vol^*(F)\overline{B}^{+}_1\big(\innerp{v_F}{x_F}t\big),
\end{align*}
where we have used that $\vol^*$ is continuous and $\innerp{v_F}{x_F-a} > 0$, since $v_F$ points outwards to $F$ and $a \in \innt(P)$.
\end{proof}

When $P$ is an integer polytope and $t$ is an integer, the formula from Theorem~\ref{thm:ed1-formula} simplifies to the classical formula
\[
 e_{d-1} = \frac{1}{2}\hspace{-.3cm} \sum_{\substack{F \subset P,\\ \dim(F) = d-1}}\hspace{-.3cm} \vol^*(F).
\]

The same technique can be applied to the computation of $e_{d-2}(t)$.

\thmedtwoformula* 
\begin{proof}
Once more we use the formula from Lemma~\ref{lem:unshift}, this time with the formula from Theorem~\ref{thm:ad2-formula} for $a_{d-2}(t)$:
\begin{multline*}
a_{d-2}(P;t) = \hspace*{-.3cm}\sum_{\substack{G \subset P,\\ \dim G = d-2}}\hspace*{-.3cm} \vol^*(G)
\bigg[
\frac{c_G}{2k}
\bigg(\frac{\|v_{F_2}\|}{\|v_{F_1}\|} \overline{B}_2\big(\innerp{v_{F_1}}{\bar{x}_G} t \big) 
+ \frac{\|v_{F_1}\|}{\|v_{F_2}\|} \overline{B}_2\big(\innerp{v_{F_2}}{\bar{x}_G}t\big)\bigg)\\
+ \lp \omega_P(G) -\frac{1}{4}\rp \bm{1}_{\Lambda_G^*}\lp t\bar{x}_G \rp
- s\big(h,k; (x_1+hx_2)t, -kx_2t\big)
\bigg].
\end{multline*}
The effect of replacing the polytope $P$ by $P+\tau(P-a)$ is scaling the relative volume of $G$ by $(1+\tau)^{d-2}$ and replace $\bar{x}_G$ by $\bar{x}_G + \tau(\bar{x}_G - \bar{a})$, where $\bar{a} = \Proj_{\lin(G)^\perp}(a)$. Recall that $x_1$ and $x_2$ are defined as the coordinates of $\bar{x}_G$ as a linear combination of $v_{F_1, G}$ and $v_{F_2, G}$, so  letting $W$ be the matrix with $v_{F_1,G}$ and $v_{F_2,G}$ as columns and $W^+ := (W^\trans W)^{-1}W^\trans$ being its pseudoinverse, we have $\lp\begin{smallmatrix} x_1\\ x_2 \end{smallmatrix}\rp = W^{+}\bar{x}_G$ and we replace it by $\lp\begin{smallmatrix} x_1\\ x_2 \end{smallmatrix}\rp + \tau W^{+}(\bar{x}_G - \bar{a})$. Note that by the orientation of $v_{F_1,G}$, $v_{F_2,G}$, and $\bar{x}_G - \bar{a}$, the vector $W^{+}(\bar{x}_G - \bar{a})$ has positive entries (see Figure~\ref{fig:rel-orient}).

To compute $\lim_{\tau \to 0^+} a_{d-2}(P + \tau(P-a);t)$, note that $\overline{B}_2$ is continuous, so we can replace $\tau$ by $0$ in it. $\Lambda_G^*$ is discrete and $\bar{x}_G - \bar{a} \neq 0$, so $\bm{1}_{\Lambda_G^*}\big( t(\bar{x}_G + \tau(\bar{x}_G-\bar{a}))\big) = 0$ for all sufficiently small $\tau$. To analyze the limit in the Dedekind-Rademacher sum, denote $\lp\begin{smallmatrix} a_1\\ a_2 \end{smallmatrix}\rp := W^{+}(\bar{x}_G - \bar{a})$ so:
\begin{align*}
 \lim_{\tau \to 0^+} s\big(h,k; (x_1+&\tau a_1)t + h(x_2+\tau a_2)t, -k(x_2+\tau a_2)t\big)\\
 &= \lim_{\tau \to 0^+}\sum_{r\hspace*{-.2cm}\mod k} \overline{B}_1\lp \frac{r}{k} - tx_2 -t\tau a_2\rp \overline{B}_1\lp \frac{h}{k}r + tx_1 + t\tau a_1\rp\\
 &= \sum_{r\hspace*{-.2cm}\mod k} \overline{B}^{-}_1\lp \frac{r}{k} - tx_2\rp \overline{B}^{+}_1\lp \frac{h}{k}r + tx_1\rp.
\end{align*}
Using the identities $\overline{B}^+_1(x) = \overline{B}_1(x) - \frac{1}{2}\bm{1}_{\Z}(x)$ and $\overline{B}^-_1(x) = \overline{B}_1(x) + \frac{1}{2}\bm{1}_{\Z}(x)$, we may rewrite it as
\begin{multline*}
= s(h,k;(x_1+hx_2)t,-kx_2t)
-\frac{1}{2}\sum_{r\hspace*{-.2cm}\mod k} \bm{1}_{\Z}\lp \frac{h}{k}r + tx_1 \rp \overline{B}_1\lp \frac{r}{k} - tx_2\rp\\
+\frac{1}{2}\sum_{r\hspace*{-.2cm}\mod k} \bm{1}_{\Z}\lp \frac{r}{k} - tx_2 \rp \overline{B}^{+}_1\lp \frac{h}{k}r + tx_1\rp.
\end{multline*}
Note that $hr/k + tx_1$ is an integer if and only if $tkx_1$ is an integer and $r \equiv -h^{-1}kx_1t \mod k$, where $h^{-1}$ denotes an integer satisfying $h^{-1}h \equiv 1 \mod k$ (in case $k = 1$ and $h = 0$, we take $h^{-1} = 1$). So the first sum becomes $\frac{1}{2} \bm{1}_{\Z}\lp kx_1t\rp \overline{B}_1\big( (h^{-1}x_1 +x_2)t \big)$. Similarly, $r/k - tx_2$ is an integer if and only if $tkx_2$ is an integer and $r \equiv tkx_2 \mod k$, so the second sum becomes $\frac{1}{2} \bm{1}_{\Z}(kx_2t)\overline{B}^+_1\big((x_1+hx_2)t\big)$.

Putting all this together, we get the desired formula for $e_{d-2}(t)$.
\end{proof}

When $P$ is an integer polytope and $t$ is an integer, the formula from Theorem~\ref{thm:ed2-formula} simplifies. Similarly to Corollary~\ref{cor:APt-int}, we have:

\coredtwoformula* 

Pommersheim found a very similar formula for $e_{d-2}$~\cite[Theorem 4]{pommersheim93}, where it is assumed $d = 3$ and $P$ an integer tetrahedra. The formula there is not a local formula though, since it is given in terms of the relative volumes of the facets of~$P$. The direct comparison of both formulas immediately gives an identity valid for tetrahedra, as follows.

\begin{corollary}
Let $P \subset \R^3$ be an integer tetrahedra. Then the following identity holds
\[
\sum_{\substack{G \subset P,\\ \dim G = d-2}} \frac{\vol^*(G)}{k} \left[ 
\frac{\|v_{F_1}\|}{\|v_{F_2}\|}\lp c_G - \frac{\vol(F_1)}{3\vol(F_2)}\rp
+ \frac{\|v_{F_2}\|}{\|v_{F_1}\|}\lp c_G - \frac{\vol(F_2)}{3\vol(F_1)}\rp
\right] = 0.
\]
\end{corollary}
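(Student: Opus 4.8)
The plan is to obtain the identity as the difference of two independent closed formulas for the same quantity: the degree-one Ehrhart coefficient $e_{1} = e_{d-2}$ of an integer tetrahedron $P \subseteq \R^3$. The first formula is our own Corollary~\ref{cor:ed2-formula}, specialized to $d = 3$, which expresses $e_{d-2}$ as a sum over the six edges $G$ of $P$, with summand
\[
\vol^*(G)\bigg[ \frac{c_G}{12k}\lp \frac{\|v_{F_1}\|}{\|v_{F_2}\|} + \frac{\|v_{F_2}\|}{\|v_{F_1}\|} \rp - s(h,k) + \frac{1}{4}\bigg].
\]
The second is Pommersheim's expression~\cite[Theorem~4]{pommersheim93}, which computes the very same coefficient but packages the edge contribution non-locally, replacing the dihedral-angle term $c_G$ by a term built from the ratios $\vol(F_1)/\vol(F_2)$ of the incident facet volumes, while carrying the \emph{identical} Dedekind-sum and constant pieces $-s(h,k) + \tfrac14$. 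Since both formulas produce the same integer $e_{1}$ for every integer tetrahedron, their difference is identically zero, and this is precisely the content of the corollary.

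Concretely, I would first rewrite Pommersheim's Theorem~4 in the notation of this paper, so that it reads as a sum over the edges $G$ with summand
\[
\vol^*(G)\bigg[ \frac{1}{36k}\lp \frac{\|v_{F_1}\|}{\|v_{F_2}\|}\frac{\vol(F_1)}{\vol(F_2)} + \frac{\|v_{F_2}\|}{\|v_{F_1}\|}\frac{\vol(F_2)}{\vol(F_1)} \rp - s(h,k) + \frac{1}{4}\bigg].
\]
Subtracting this from Corollary~\ref{cor:ed2-formula} term by term, the shared pieces $-s(h,k) + \tfrac14$ cancel on every edge, leaving the vanishing of $\sum_G \tfrac{\vol^*(G)}{12k}[\,\cdots\,]$. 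Factoring $\tfrac{1}{12k}$ out of the remaining geometric difference and regrouping the two ratios gives exactly
\[
\frac{\|v_{F_1}\|}{\|v_{F_2}\|}\lp c_G - \frac{\vol(F_1)}{3\vol(F_2)}\rp + \frac{\|v_{F_2}\|}{\|v_{F_1}\|}\lp c_G - \frac{\vol(F_2)}{3\vol(F_1)}\rp,
\]
and multiplying through by $12$ produces the stated identity. No analytic input is needed at this stage: the cancellation is purely algebraic once both formulas are written on a common footing.

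The main obstacle is not the algebra but the \emph{translation}: Pommersheim's Theorem~4 is phrased in his own conventions (via cotangent/Dedekind-sum data attached to the edge cones and the relative facet volumes), and matching his edge parameters, angle cosines, and normal-vector normalizations to our $(h,k)$, $c_G$, $v_{F_1}$, $v_{F_2}$, and $\vol(F_j)$ is the step that requires care. I would verify this correspondence on the cone of type $(h,k)$ at a single edge, checking that his local factor agrees with the $\tfrac{1}{36k}$-form above and, crucially, that his Dedekind-sum normalization matches the $s(h,k)$ appearing in Corollary~\ref{cor:ed2-formula}. Once this bookkeeping is confirmed, the identity follows immediately, since the two formulas must agree as functions of $P$ and the only discrepancy between their summands is the geometric term isolated above.
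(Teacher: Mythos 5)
Your proposal is correct and is exactly the paper's argument: the authors prove this corollary by directly comparing the local formula of Corollary~\ref{cor:ed2-formula} with Pommersheim's Theorem~4 for the linear Ehrhart coefficient of an integer tetrahedron, noting that the Dedekind-sum and constant terms coincide so that the difference of the two expressions yields the stated geometric identity. Your algebra (cancellation of $-s(h,k)+\tfrac14$, factoring out $\tfrac{1}{12k}$, and regrouping) reproduces the bracketed expression precisely, and your flagged translation of Pommersheim's conventions is the same bookkeeping the paper implicitly relies on.
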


\bigskip
\section{Two examples in three dimensions}\label{sec:examples}

In this section we consider two examples to show in detail how the computations described in Section~\ref{sec:complexity} are performed in practice. With Theorems~\ref{thm:ad1-formula} and~\ref{thm:ed1-formula} we also have a formula for the codimension one quasi-coefficients and even without having a general formula for the codimension three quasi-coefficients, in these examples we fully compute the quasi-polynomials for all positive real $t$ using the knowledge of $A_P(t)$ and $L_P(t)$ in the interval $0 < t < 1$. We also make use of the third periodized Bernoulli polynomial $\overline{B}_3(t) := (t - \lfloor t \rfloor)^3 - \frac{3}{2}(t - \lfloor t \rfloor)^2 + \frac{1}{2}(t - \lfloor t \rfloor)$.

\begin{example}\label{ex:standard-simplex}
The first example is the standard simplex $\Delta := \conv\{(0,0,0)^\trans$, $(1,0,0)^\trans$, $(0,1,0)^\trans$, $(0,0,1)^\trans\}$, whose solid angle polynomial was computed by Beck and Robins~\cite[Example 13.3]{beck15} for integer values of $t$. Here we show that for all positive real values of $t$,
\begin{multline*}
A_{\Delta}(t) = \frac{1}{6}t^3 - \frac{1}{2}\overline{B}_1(t) t^2 + 
\lp \frac{1}{2}\overline{B}_2(t) + \lp\frac{3}{2\pi}\arccos\lp \frac{1}{\sqrt{3}}\rp - \frac{3}{4}\rp \bm{1}_\Z(t) + \frac{1}{4}\rp t\\
-\overline{B}_1(t)\lp \frac{1}{6}\overline{B}_2(t) + \frac{2}{9}\rp,
\end{multline*}
and 
\begin{multline*}
L_{\Delta}(t) = \frac{1}{6}t^3 +\lp-\frac{1}{2}\overline{B}_1^{+}(t) + \frac{3}{4}\rp t^2  + \lp \frac{1}{2}\overline{B}_2(t) - \frac{3}{2}\overline{B}_1^+(t) + 1\rp t\\ 
-\frac{1}{6}\overline{B}_3(t) + \frac{3}{4}\overline{B}_2(t) - \overline{B}_1^{+}(t) + \frac{3}{8}.
\end{multline*}
\end{example}

\begin{figure}[t]
\tdplotsetmaincoords{60}{110}
\begin{tikzpicture}[tdplot_main_coords, scale = .75]
\coordinate (o) at (0,0);
\coordinate (a) at (5,0);
\coordinate (b) at (0,4.7);
\coordinate (a2) at (5.8,0);
\coordinate (b2) at (0,5.15);

\begin{scope}[yshift=3.9cm]
\coordinate (c) at (0,0); 
\end{scope}
\begin{scope}[yshift=4.3cm]
\coordinate (c2) at (0,0); 
\end{scope}

\fill[gray, opacity = .4] (a) -- (b) -- (o) -- cycle;
\fill[gray, opacity = .2] (a) -- (c) -- (o) -- cycle;
\fill[gray, opacity = .6] (c) -- (b) -- (o) -- cycle;

\draw[->, thick] (a) -- (a2);
\draw[->, thick] (b) -- (b2);
\draw[->] (c) -- (c2);

\draw[thick, dashed] (o) -- (a)  (o) -- (b)  (o) -- (c);
\draw[thick] (c) -- (b) -- (a) -- cycle;

\node[left=2] at (a) {$\lp\begin{smallmatrix} 1 \\ 0\\ 0\end{smallmatrix}\rp$};
\node[below=2] at (b) {$\lp\begin{smallmatrix} 0 \\ 1\\ 0\end{smallmatrix}\rp$};
\node[right=2] at (c) {$\lp\begin{smallmatrix} 0 \\ 0\\ 1\end{smallmatrix}\rp$};
\node[right=2] at ($(o)!.5!(a)$) {$e_1$};
\node[below] at ($(o)!.4!(b)$) {$e_2$};
\node[right] at ($(c)!.6!(o)$) {$e_3$};
\node[left] at ($(c)!.5!(a)$) {$e_4$};
\node[right=2] at ($(c)!.5!(b)$) {$e_5$};
\node[below] at ($(a)!.5!(b)$) {$e_6$};
\end{tikzpicture}
\caption{The standard simplex $\Delta := \conv\{(0,0,0)^\trans$, $(1,0,0)^\trans$, $(0,1,0)^\trans$, $(0,0,1)^\trans\}$ and its edges.}
\end{figure}

\begin{proof}
This polytope has four facets with corresponding supporting inequalities $F_1: x_1 \geq 0$, $F_2: x_2 \geq 0$, $F_3: x_3 \geq 0$, and $F_4: x_1 + x_2 + x_3 \leq~1$. 

We know that $A_\Delta(t)$ and $L_\Delta(t)$ have quasi-polynomial expressions
\begin{align*}
A_\Delta(t) &= \vol(\Delta) t^3 + a_2(t)t^2 + a_1(t)t +a_0(t),\\
L_\Delta(t) &= \vol(\Delta) t^3 + e_2(t)t^2 + e_1(t)t +e_0(t),
\end{align*}
with quasi-coefficients having period $1$, $a_0(0) = a_2(0) = 0$ (due to the Macdonald's Reciprocity Theorem~\cite[Theorem 13.7]{beck15}) and $e_0(0) = 1$ (due to~\cite[Corollary 3.15]{beck15}). 

We have that $\vol(\Delta) = 1/6$ and we can compute $a_2(t)$ and $e_2(t)$ with Theorems~\ref{thm:ad1-formula} and~\ref{thm:ed1-formula}. Since $\overline{B}_1(0) = 0$ and $0 \in F_1, F_2, F_3$, using that $\vol^*(F_4) = 1/2$ and $v_{F_4} = (1,1,1)^\trans$, we get 
\[a_2(t) = -\frac{1}{2}\overline{B}_1(t).\]
Since $\overline{B}_1^{+}(0) = -1/2$ and $\vol^*(F_1) = \vol^*(F_2) = \vol^*(F_3) = 1/2$, we get
\[e_2(t) = -\frac{1}{2}\overline{B}_1^{+}(t) + \frac{3}{4} .\]

We use Theorems~\ref{thm:ad2-formula} and~\ref{thm:ed2-formula} together with the procedure described in Section~\ref{sec:complexity} to compute~$a_1(t)$ and~$e_1(t)$. Due to the symmetry of $\Delta$, we only have to consider two edges.

The edge $e_1$ has incident facets $F_2$ and $F_3$ and relative volume $\vol^*(e_1) = 1$. From the inequalities, we get $v_{F_2} = (0,-1,0)^\trans$ and $v_{F_3} = (0,0,-1)^\trans$. From their inner product, we have $c_{e_1} = 0$ and $\omega_{\Delta}(e_1) =~1/4$. Next we write $U = (v_{F_2}, v_{F_3}) = \lp\begin{smallmatrix} 0 & 0\\ -1 & 0\\ 0 & -1\end{smallmatrix}\rp$ and compute the projection onto $\lin(e_1)^\perp$, $P = U(U^\trans U)^{-1}U^\trans = \lp\begin{smallmatrix} 0 & 0 & 0\\ 0 & 0 & 1\\ 0 & 1 & 0\end{smallmatrix}\rp$. Inspecting its columns, we get the lattice basis $\{(0,  0, 1)^\trans$, $(0, 1, 0)^\trans\}$ for $\Lambda_{e_1}^*$. Computing $f_{2,3}$ with formula~\eqref{eq:fjk} we obtain $(0,0,-1)^\trans$ and thus $v_{F_2, e_1} = (0,0,-1)^\trans$, also $f_{3,2} = (0,-1,0)^\trans$, so $v_{F_3, e_1} = (0,-1,0)^\trans$. Hence we can make $v_1 = v_{F_2, e_1}$ and $v_2 = v_{F_3, e_1}$ so that $h = 0$ and $k = 1$. Letting $V = (v_1, v_2)$, we compute $\det(\Lambda_{e_1}) = \det(V^\trans V)^{-1/2} = 1$. Since $(0,0,0)^\trans \in e_1$, we get $\bar{x}_{e_1} = P(0,0,0)^\trans = (0,0,0)^\trans$, so $x_1 = x_2 = 0$. 
With this information, the contribution from edge $e_1$ (and also from $e_2$ and $e_3$) to the sum in Theorem~\ref{thm:ad2-formula} is $-s(0,1;0,0) = 0$ and to the sum in Theorem~\ref{thm:ed2-formula} is $1/4$.

\medskip

The edge $e_4$ has incident facets $F_2$ and $F_4$ and relative volume $\vol^*(e_4) =~1$. From the inequalities, we get $v_{F_2} = (0,-1,0)^\trans$ and $v_{F_4} = (1,1,1)^\trans$. From their inner product, we have $c_{e_4} = 1/\sqrt{3}$ and $\omega_{\Delta}(e_4) = \arccos\lp 1/\sqrt{3}\rp/(2\pi)$. Next we write $U = (v_{F_2}, v_{F_4}) = \lp\begin{smallmatrix} 0 & 1\\ -1 & 1\\ 0 & 1\end{smallmatrix}\rp$ and compute the projection onto $\lin(e_4)^\perp$, $P = U(U^\trans U)^{-1}U^\trans = \lp\begin{smallmatrix} 1/2\ & 0\ & 1/2\\ 0\ & 1\ & 0\\ 1/2\ & 0\ & 1/2\end{smallmatrix}\rp$. Inspecting its columns, we get the lattice basis $\{(1/2, 0, 1/2)^\trans,$ $(0, 1, 0)^\trans\}$ for $\Lambda_{e_4}^*$. Computing $f_{2,4}$ with formula~\eqref{eq:fjk} we obtain $(1,1,1)^\trans + (0,-1,0)^\trans = (1,0,1)^\trans$ and thus $v_{F_2, e_4} = (1/2,0,1/2)^\trans$, also $f_{4,2} = (0,-3,0)^\trans + (1,1,1)^\trans = (1,-2,1)^\trans$, so $v_{F_4, e_4} = (1/2,-1,1/2)^\trans$. Hence we can make $v_1 = v_{F_2, e_4}$ and $v_2 = v_{F_4, e_4}$ (check that all columns from $P$ can be obtained as integer combinations of $v_1$ and $v_2$) so that $h = 0$ and $k = 1$. Letting $V = (v_1, v_2)$, we compute $\det(\Lambda_{e_4}) = \det(V^\trans V)^{-1/2} = \sqrt{2}$. Since $(1,0,0)^\trans \in e_4$, we get $\bar{x}_{e_4} = P(1,0,0)^\trans = (1/2,0,1/2)^\trans$ and $x_1 = 1$, $x_2 = 0$.
With this information, the contribution from edge $e_4$ (and also from $e_5$ and $e_6$) to the sum in Theorem~\ref{thm:ad2-formula}~is
\begin{align*}
\frac{1}{2\sqrt{3}} &\lp \frac{\sqrt{3}}{1} \overline{B}_2(0) + \frac{1}{\sqrt{3}}\overline{B}_2(t) \rp + \lp\frac{1}{2\pi}\arccos\lp \frac{1}{\sqrt{3}}\rp - \frac{1}{4}\rp \bm{1}_\Z(t) - s(0,1;t,0)\\
&= \frac{1}{12} + \frac{1}{6}\overline{B}_2(t) + \lp\frac{1}{2\pi}\arccos\lp \frac{1}{\sqrt{3}}\rp - \frac{1}{4}\rp \bm{1}_\Z(t),
\end{align*}
and to the sum in Theorem~\ref{thm:ed2-formula},
\begin{align*}
\frac{1}{2\sqrt{3}} &\lp \frac{\sqrt{3}}{1} \overline{B}_2(0) + \frac{1}{\sqrt{3}}\overline{B}_2(t) \rp - s(0,1;t,0) -\frac{1}{2}\bm{1}_\Z(t) \overline{B}_1(t) - \frac{1}{2}\overline{B}_1^+(t)  \\
&= \frac{1}{12} + \frac{1}{6}\overline{B}_2(t) - \frac{1}{2}\overline{B}_1^+(t).
\end{align*}

Multiplying by three to take into account the three similar edges, the coefficient~$a_{1}(t)$ of $A_{\Delta}(t)$, for all positive $t \in \R$, is
\[a_{1}(t) = \frac{1}{2}\overline{B}_2(t) + \lp\frac{3}{2\pi}\arccos\lp \frac{1}{\sqrt{3}}\rp - \frac{3}{4}\rp \bm{1}_\Z(t) + \frac{1}{4}.\]
When $t \in \Z$, this becomes $3\arccos\lp  1/\sqrt{3}\rp/(2\pi) - 5/12$, as computed in Example~13.3 of Beck and Robins~\cite{beck15}. Similarly, the coefficient~$e_{1}(t)$ of $L_{\Delta}(t)$, for all positive $t \in \R$, is
\[e_{1}(t) = \frac{1}{2}\overline{B}_2(t) - \frac{3}{2}\overline{B}_1^+(t) + 1.\]

To compute $a_0(t)$, we observe that for $0 < t < 1$, the only integer point in $t\Delta$ is~$(0,0,0)^\trans$ and its solid angle is $1/8$, so $A_\Delta(t) = 1/8$. Hence, for $0 < t < 1$,
\begin{align*}
 a_0(t) &= \frac{1}{8} - \frac{1}{6}t^3 - a_2(t)t^2 - a_1(t)t
 = \frac{1}{8} - \frac{1}{6}t^3 + \frac{1}{2}\overline{B}_1(t) t^2 - \lp \frac{1}{4} + \frac{1}{2}\overline{B}_2(t)\rp t\\
 &= -\overline{B}_1(t)\lp \frac{1}{6}\overline{B}_2(t) + \frac{2}{9}\rp.
\end{align*}
Similarly for $e_0(t)$, we have $L_\Delta(t) = 1$ for $0 < t < 1$, so
\begin{align*}
 e_0(t) &= 1 - \frac{1}{6}t^3 - e_2(t)t^2 - e_1(t)t\\
 &= 1 - \frac{1}{6}t^3   -\lp-\frac{1}{2}\overline{B}_1^{+}(t) + \frac{3}{4}\rp t^2  - \lp 1 + \frac{1}{2}\overline{B}_2(t) - \frac{3}{2}\overline{B}_1^+(t)\rp t\\
 &= -\frac{1}{6}\overline{B}_3(t) + \frac{3}{4}\overline{B}_2(t) - \overline{B}_1^{+}(t) + \frac{3}{8}.\qedhere
\end{align*}
\end{proof}

\begin{example}\label{ex:order-simplex}
The second example is the order simplex $\vartriangleleft\ := \conv\{(0,0,0)^\trans$, $(1,0,0)^\trans$, $(1,1,0)^\trans$, $(1,1,1)^\trans\}$, that receives this name since it corresponds to the linear ordering $x_3 \leq x_2 \leq x_1$ and is interesting since it tiles the cube together with the reflections corresponding to the six permutations of its coordinates. Here we show that for all positive real values of $t$,
\begin{multline*}
 A_\vartriangleleft(t) = \frac{1}{6}t^3 - \frac{1}{2}\overline{B}_1(t)t^2 + \lp \frac{1}{24} + \frac{1}{2}\overline{B}_2(t) - \frac{1}{8}\bm{1}_\Z(t) \rp t -\overline{B}_1(t)\lp \frac{1}{6}\overline{B}_2(t) + \frac{1}{72} \rp,
\end{multline*}
and,
\begin{multline*}
 L_\vartriangleleft(t) = \frac{1}{6}t^3 + \lp -\frac{1}{2}\overline{B}^{+}_1(t) + \frac{3}{4} \rp t^2 + \lp \frac{1}{2}\overline{B}_2(t) -\frac{3}{2}\overline{B}_1^+(t) + 1\rp t\\ 
 -\frac{1}{6}\overline{B}_3(t) + \frac{3}{4}\overline{B}_2(t) - \overline{B}_1^{+}(t) + \frac{3}{8}.
\end{multline*}
\end{example}

\begin{figure}[t]
\tdplotsetmaincoords{60}{20}
\begin{tikzpicture}[tdplot_main_coords, scale = .75]

\coordinate (o) at (0,0);
\coordinate (a) at (4.5,0);
\coordinate (b) at (4.5,5);
\coordinate (c) at (0,5);
\coordinate (a2) at (5.1,0);
\coordinate (c2) at (0,6);

\begin{scope}[yshift=3.6cm]
\coordinate (d) at (0,0); 
\coordinate (e) at (4.5,0); 
\coordinate (f) at (4.5,5); 
\coordinate (g) at (0,5); 
\end{scope}

\begin{scope}[yshift=4.25cm]
\coordinate (d2) at (0,0);
\end{scope}

\draw[thin, ->] (a) -- (a2);
\draw[thin, ->] (d) -- (d2);
\draw[->] (c) -- (c2);

\fill[gray, opacity = .4] (a) -- (b) -- (f) -- cycle;
\fill[gray, opacity = .3] (o) -- (b) -- (f) -- cycle;
\fill[gray, opacity = .6] (o) -- (a) -- (b) -- cycle;

\draw[thick] (o) -- (a) -- (b) -- (f) -- (o) (a) -- (f);
\draw[thick, dashed] (o) -- (b);

\coordinate (cb) at (intersection cs: 
            first line={(b) -- (c)}, second line={(o) -- (f)});

\draw[ultra thin] (d) -- (o) -- (c) -- (g) -- (d) -- (e) -- (f) -- (g) (a) -- (e) (c) -- (cb);
\draw[ultra thin, dashed] (cb) -- (b);

\node[below] at (o) {$\lp\begin{smallmatrix} 0 \\ 0\\ 0\end{smallmatrix}\rp$};
\node[below] at (a) {$\lp\begin{smallmatrix} 1 \\ 0\\ 0\end{smallmatrix}\rp$};
\node[right] at (b) {$\lp\begin{smallmatrix} 1 \\ 1\\ 0\end{smallmatrix}\rp$};
\node[right] at (f) {$\lp\begin{smallmatrix} 1 \\ 1\\ 1\end{smallmatrix}\rp$};
\node[below] at ($(o)!.5!(a)$) {$e_1$};
\node[below] at ($(o)!.5!(b)$) {$e_2$};
\node[right] at ($(a)!.5!(b)$) {$e_3$};
\node[right] at ($(f)!.5!(a)$) {$e_4$};
\node[right] at ($(f)!.5!(b)$) {$e_6$};
\node[left=2] at ($(f)!.5!(o)$) {$e_5$};
\end{tikzpicture}
\caption{The order simplex $\vartriangleleft\ := \conv\{(0,0,0)^\trans$, $(1,0,0)^\trans$, $(1,1,0)^\trans$, $(1,1,1)^\trans\}$ from Example~\ref{ex:order-simplex} and its edges.}
\end{figure}

\begin{proof}
This polytope has four facets with corresponding supporting inequalities $F_1: x_3 \geq 0$, $F_2: x_3 - x_2 \leq 0$, $F_3: x_2 - x_1 \leq 0$, and $F_4: x_1 \leq 1$.

Again, we know that $A_\vartriangleleft(t)$ and $L_\vartriangleleft(t)$ have quasi-polynomial expressions 
\begin{align*}
A_\vartriangleleft(t) &= \vol(\vartriangleleft) t^3 + a_2(t)t^2 + a_1(t)t +a_0(t),\\
L_\vartriangleleft(t) &= \vol(\vartriangleleft) t^3 + e_2(t)t^2 + e_1(t)t +e_0(t),
\end{align*}
with quasi-coefficients having period $1$, $a_0(0) = a_2(0) = 0$ (due to the Macdonald's Reciprocity Theorem~\cite[Theorem 13.7]{beck15}) and $e_0(0) = 1$ (due to ~\cite[Corollary 3.15]{beck15}). 

We have that $\vol(\vartriangleleft) = 1/6$ and we can compute $a_2(t)$ and $e_2(t)$ with Theorems~\ref{thm:ad1-formula} and~\ref{thm:ed1-formula}. Obtaining
\[a_2(t) = -\frac{1}{2}\overline{B}_1(t),\]
and
\[e_2(t) = -\frac{1}{2}\overline{B}^{+}_1(t) + \frac{3}{4}.\]

We use Theorems~\ref{thm:ad2-formula} and~\ref{thm:ed2-formula} together with the procedure described in Section~\ref{sec:complexity} to compute~$a_1(t)$ and~$e_1(t)$. To avoid repetition, we skip the computation of the contribution from edges $e_1$, $e_2$, $e_3$, and $e_4$. All them contribute with $0$ to $a_1(t)$ and they contribute with $\frac{3}{8}$, $\frac{1}{4}$, $-\frac{1}{2}\overline{B}_1^+(t)$, and $-\frac{1}{2}\overline{B}_1^+(t)$ respectively to $e_1(t)$.

The edge $e_5$ has incident facets $F_2$ and $F_3$ and relative volume $\vol^*(e_5) = 1$. From the inequalities, we get $v_{F_2} = (0,-1,1)^\trans$ and $v_{F_3} = (-1,1,0)^\trans$. From their inner-product, we have $c_{e_5} = 1/2$ and $\omega_{\vartriangleleft}(e_5) = 1/6$. Next we write $U = (v_{F_2}, v_{F_3}) = \lp\begin{smallmatrix} 0 & -1\\ -1 & 1\\ 1 & 0\end{smallmatrix}\rp$ and compute the projection onto $\lin(e_5)^\perp$, $P = U(U^\trans U)^{-1}U^\trans$ $= \lp\begin{smallmatrix} 2/3\ & -1/3\ & -1/3\\ -1/3\ & 2/3\ & -1/3\\ -1/3\ & -1/3\ & 2/3 \end{smallmatrix}\rp$. Inspecting its columns, we get the lattice basis $\{(2/3, -1/3$, $-1/3)^\trans$, $(-1/3, 2/3, -1/3)^\trans\}$ for $\Lambda_{e_5}^*$. Computing $f_{2,3}$ with~\eqref{eq:fjk} we obtain $(-2,1,1)^\trans$ and thus $v_{F_2, e_5} = (-2/3,1/3,1/3)^\trans$, also $f_{3,2} = (-1,-1,2)^\trans$, so $v_{F_3, e_5} = (-1/3$, $-1/3,2/3)^\trans$. Hence we can make $v_1 = v_{F_2, e_5}$ and $v_2 = v_{F_3, e_5}$ so that $h = 0$ and $k = 1$. Letting $V = (v_1, v_2)$, we compute $\det(\Lambda_{e_5}) = \det(V^\trans V)^{-1/2} = \sqrt{3}$. Since $(0,0,0)^\trans \in e_5$, we get $\bar{x}_{e_5} = P(0,0,0)^\trans = (0,0,0)^\trans$, so $x_1 = x_2 = 0$.
With this information, the contribution from edge $e_5$ to the sum in Theorem~\ref{thm:ad2-formula} is 
\[\frac{1}{4}\lp \frac{1}{6} + \frac{1}{6} \rp - \frac{1}{12} -s(0,1;0,0) = 0,\]
and to the sum in Theorem~\ref{thm:ed2-formula} is 
\[\frac{1}{4}\lp \frac{1}{6} + \frac{1}{6} \rp - s(0,1;0,0) + \frac{1}{4} = \frac{1}{3}.\]

\medskip

The edge $e_6$ has incident facets $F_3$ and $F_4$ and relative volume $\vol^*(e_6) = 1$. From the inequalities, we get $v_{F_3} = (-1,1,0)^\trans$ and $v_{F_4} = (1,0,0)^\trans$. From their inner-product, we have $c_{e_6} = 1/\sqrt{2}$ and $\omega_{\vartriangleleft}(e_6) = 1/8$. Next we write $U = (v_{F_3}, v_{F_4}) = \lp\begin{smallmatrix} -1 & 1\\ 1 & 0\\ 0 & 0 \end{smallmatrix}\rp$ and compute the projection onto $\lin(e_6)^\perp$, $P = U(U^\trans U)^{-1}U^\trans$ $= \lp\begin{smallmatrix} 1 & 0 & 0\\ 0 & 1 & 0\\ 0 & 0 & 0 \end{smallmatrix}\rp$. Inspecting its columns, we get the lattice basis $\{(1, 1, 0)^\trans$, $(0, 1, 0)^\trans\}$ for $\Lambda_{e_6}^*$. Computing $f_{3,4}$ with~\eqref{eq:fjk} we obtain $(1,1,0)^\trans$ and thus $v_{F_3, e_6} = (1,1,0)^\trans$, also $f_{4,3} = (0,1,0)^\trans$, so $v_{F_4, e_6} = (0,1,0)^\trans$. Hence we can make $v_1 = v_{F_3, e_6}$ and $v_2 = v_{F_4, e_6}$ so that $h = 0$ and $k = 1$. Letting $V = (v_1, v_2)$, we compute $\det(\Lambda_{e_6}) = \det(V^\trans V)^{-1/2} = 1$. Since $(1,1,0)^\trans \in e_6$, we get $\bar{x}_{e_6} = P(1,1,0)^\trans = (1,1,0)^\trans$, so $x_1 = 1$ and $x_2 = 0$.
With this information, the contribution from edge $e_6$ to the sum in Theorem~\ref{thm:ad2-formula} is 
\[
\frac{1}{2\sqrt{2}}\lp \sqrt{2}\, \overline{B}_2(t) + \frac{1}{6\sqrt{2}}\rp - \frac{1}{8}\bm{1}_\Z(t) -s(0,1;t,0)
= \frac{1}{2}\overline{B}_2(t) - \frac{1}{8}\bm{1}_\Z(t) + \frac{1}{24},\]
and to the sum in Theorem~\ref{thm:ed2-formula} is 
\begin{multline*}
\frac{1}{2\sqrt{2}}\lp \sqrt{2}\,\overline{B}_2(t) + \frac{1}{6\sqrt{2}}\rp -s(0,1;t,0) - \frac{1}{2}\bm{1}_\Z(t)\overline{B}_1(t) - \frac{1}{2}\overline{B}_1^+(t)\\ 
= \frac{1}{2}\overline{B}_2(t) -\frac{1}{2}\overline{B}_1^+(t) + \frac{1}{24}.
\end{multline*}

Therefore the coefficient $a_{1}(t)$ of $A_{\vartriangleleft}(t)$, for all positive $t \in \R$, is
\[a_{1}(t) = \frac{1}{2}\overline{B}_2(t) - \frac{1}{8}\bm{1}_\Z(t) + \frac{1}{24}.\]
When $t \in \Z$, this becomes $0$, as expected due to the fact that $\vartriangleleft$ tiles the space together with the simplices obtained by reflections across its facets. Similarly, the coefficient $e_1(t)$ of $A_{\vartriangleleft}(t)$, for all positive $t \in \R$, is
\begin{align*}
e_1(t) &= \frac{3}{8} + \frac{1}{4} - \overline{B}_1^+(t) + \frac{1}{3} + \frac{1}{2}\overline{B}_2(t) -\frac{1}{2}\overline{B}_1^+(t) + \frac{1}{24}
= \frac{1}{2}\overline{B}_2(t) -\frac{3}{2}\overline{B}_1^+(t) + 1.
\end{align*}

To compute $a_0(t)$, we observe that for $0 < t < 1$, the only integer point in $(t\!\vartriangleleft)$ is $(0,0,0)^\trans$ and its solid angle is $1/6 . 1/8$ (to see this, we use again that $\vartriangleleft$ together with six reflections tiles the cube), so $A_\vartriangleleft(t) = 1/48$. Hence, for $0 < t < 1$,
\begin{align*}
 a_0(t) &= \frac{1}{48} - \frac{1}{6}t^3 - a_2(t)t^2 - a_1(t)t
 = \frac{1}{48} - \frac{1}{6}t^3 + \frac{1}{2}\overline{B}_1(t) t^2 - \lp \frac{1}{24} + \frac{1}{2}\overline{B}_2(t)\rp t\\
 &= -\overline{B}_1(t)\lp \frac{1}{6}\overline{B}_2(t) + \frac{1}{72} \rp.
\end{align*}
Similarly for $e_0(t)$, we have $L_\vartriangleleft(t) = 1$ for $0 < t < 1$, so
\begin{align*}
 e_0(t) &= 1 - \frac{1}{6}t^3 - e_2(t)t^2 - e_1(t)t\\
 &= 1 - \frac{1}{6}t^3  - \lp -\frac{1}{2}\overline{B}^{+}_1(t) + \frac{3}{4} \rp t^2 - \lp \frac{1}{2}\overline{B}_2(t) -\frac{3}{2}\overline{B}_1^+(t) + 1\rp t\\
 &= -\frac{1}{6}\overline{B}_3(t) + \frac{3}{4}\overline{B}_2(t) - \overline{B}_1^{+}(t) + \frac{3}{8}.\qedhere
\end{align*}
\end{proof}

\medskip

\begin{remark}
Notice that albeit the polytopes in both examples have very different solid angle sum functions, they have the same Ehrhart function. This is not a surprise since the unimodular transformation $U = \lp\begin{smallmatrix} 1 & 1 & 1\\ 0 & 1 & 1\\ 0 & 0 & 1 \end{smallmatrix}\rp$ sends the standard simplex to the order simplex and since it maps $\Z^3$ to $\Z^3$, we indeed have $|(t \Delta) \cap \Z^3 | = |(t\! \vartriangleleft) \cap \Z^3 |$ for all positive real $t$. Since the transformation given by matrix $U$ is not orthogonal, it doesn't preserve solid angles though.
\end{remark}

\medskip
\section{Concrete polytopes and further remarks}\label{sec:concrete}

We introduce a family of polytopes, called concrete polytopes, which come up naturally in our context, and in the context of multi-tiling. Consider Example~\ref{ex:order-simplex}, where we had a polytope $P$ whose solid angle sum was $A_P(t) = \vol(P)t^d$, for all positive integer values of $t$. More generally, as done by Brandolini, Colzani, Robins, and Travaglini~\cite{brandolini19}, we say that a polytope $P$ is \textbf{concrete}~if:
\begin{equation} \label{concrete.polytope}
A_P(t) = \vol(P) t^d, 
\end{equation}
for all positive integer values of $t$. Such polytopes are very special, because their discrete volume $A_P(t)$ matches exactly their continuous (Lebesgue) volume.
 
As another example, consider any integer polygon $P$ in $\R^2$.  It is then always true that $A_P(t) = \vol(P)t^2$, for all positive integer values of $t$, which is an equivalent formulation of {\em Pick's Theorem}.
 
The motivation for using the word `concrete' is borrowed from the title of the book ``Concrete Mathematics", where Graham, Knuth, and Patashnik mention that the word `concrete', which uses the first $3$ letters of `continuous', and the last~$5$ letters of `discrete',  embodies objects that are both ``continuous" and ``discrete".  

Another special family of concrete polytopes is the collection of \textbf{integer zonotopes} (see Lemma~\ref{cs.facets} below). Integer zonotopes are projections of cubes or, equivalently, integer polytopes whose faces (of all dimensions) are centrally symmetric (see e.g. Ziegler~\cite[Section 7.3]{ziegler95}). Alexandrov~\cite{alexandrov33}, and independently Shephard~\cite{shephard67}, proved the following fact.

\begin{lemma}[Alexandrov, Shephard]\label{cs1}
Let $P$ be any real, $d$-dimensional polytope, with $d \geq 3$.  If the facets of  $P$ 
are centrally symmetric, then $P$ is centrally symmetric.
\end{lemma}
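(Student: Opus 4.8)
The plan is to reduce the statement to a symmetry property of the surface area measure and then to establish that property through the \emph{belt} (or \emph{zone}) structure forced by the central symmetry of the facets. Recall that $P$ is centrally symmetric precisely when $-P$ is a translate of $P$. By Minkowski's uniqueness theorem, two $d$-polytopes are translates of one another if and only if they have the same surface area measure; since the surface area measure of $-P$ is the antipodal push-forward of that of $P$, it therefore suffices to prove that the surface area measure $S_P = \sum_{F}\vol_{d-1}(F)\,\delta_{n_F}$ is invariant under $n \mapsto -n$. Concretely, I must show that for every facet $F$ of $P$, with outer unit normal $n_F$, there is a facet $F'$ with $n_{F'} = -n_F$ and $\vol_{d-1}(F') = \vol_{d-1}(F)$. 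This is where the hypothesis $d \geq 3$ is essential: in the plane the facets are segments, which are automatically centrally symmetric, and the conclusion is false.

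First I would record the local consequence of the hypothesis. Each facet $F$ is a centrally symmetric $(d-1)$-polytope with some center $c_F$, so its own facets --- the ridges $R$ of $P$ contained in $F$ --- occur in antipodal pairs $R \leftrightarrow R^\ast := 2c_F - R$, where $R^\ast$ is parallel to $R$ and $\vol_{d-2}(R^\ast) = \vol_{d-2}(R)$. Starting from a ridge $R_0$ and repeatedly passing to the opposite ridge inside the current facet and then crossing into the second facet sharing that ridge, one traces a closed \emph{belt}: a cyclic sequence of facets $F_1, F_2, \dots$ and ridges $R_0, R_1, \dots$ in which consecutive ridges are antipodal inside a common facet. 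All ridges of a belt are parallel to a fixed $(d-2)$-dimensional subspace $L := \lin(R_0)$, so every facet of the belt has its normal in the $2$-plane $L^\perp$, and these normals wind exactly once around the unit circle of $L^\perp$. Moreover, since the composition of two point reflections is a translation, the ridges of a belt with the same parity are translates of one another.

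The heart of the argument is to promote these local pairings to a global antipodal matching of the facets. I would project $P$ orthogonally onto the plane $L^\perp$: the belt is then exactly the boundary cycle of the shadow polygon $Q := \Proj_{L^\perp}(P)$, its facets mapping to the edges of $Q$ and its ridges to the vertices, while the central symmetry of each belt facet identifies the width of the corresponding edge with the distance between its two paired ridges. The goal is to conclude that $Q$ is centrally symmetric, for then the edge normals of $Q$ --- and hence the facet normals lying in $L^\perp$ --- occur in antipodal pairs of equal width, and letting $L$ range over all ridge directions recovers the antipodal symmetry of $S_P$ required above.

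The main obstacle is precisely this last step. A single belt does not by itself force $Q$ to be centrally symmetric: the reflection relation $R_{j} = 2c_{F_j} - R_{j-1}$ only reproduces the tautology that the projected facet center is the midpoint of the corresponding edge, and it carries no information until one uses that \emph{every} facet of $P$, across all belts simultaneously, is centrally symmetric. The genuine content of the theorem is this global consistency: the antipodal ridge-pairings coming from all facets must fit together into a single coherent antipodal matching of the facets with matching volumes. I would approach it by an inductive propagation of the centers $c_F$ over the ridge graph of $\partial P$, showing that the translational defects accumulated around any closed loop cancel, so that the assignment $F \mapsto c_F$ descends to a single global center for $P$. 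The fact that $\partial P$ is a $(d-1)$-sphere, and hence simply connected exactly when $d \geq 3$, is what makes this loop-cancellation possible, and is the structural reason the statement fails in the plane.
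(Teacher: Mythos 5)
The paper does not actually prove this lemma --- it is quoted from Alexandrov and Shephard with a citation --- so there is no internal proof to compare against, and your proposal has to stand on its own. Its framework is the classical one and is sound as a strategy: reduce, via Minkowski's uniqueness theorem, to showing the surface area measure is even, and exploit the belts generated by the antipodal ridge pairs inside each centrally symmetric facet. The setup (ridges of a belt all parallel to a fixed $(d-2)$-subspace $L$, belt facet normals lying in the $2$-plane $L^\perp$, the belt tracing the boundary of the shadow polygon $Q=\Proj_{L^\perp}(P)$) is correct. But the proposal stops exactly where the theorem begins. The entire content of the Alexandrov--Shephard result is the global step you defer: showing that the local ridge pairings cohere into an antipodal matching of facets under which $F(P,-u)$ is a facet and a translate of $-F(P,u)$. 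Your final paragraph replaces this with a plan --- ``inductive propagation of the centers $c_F$ over the ridge graph,'' ``translational defects around closed loops cancel,'' simple connectivity of $\partial P$. These are the right keywords for Alexandrov's monodromy-style argument, but no propagation rule is defined, no loop invariant is exhibited, and no cancellation is verified; as written this is a description of a proof you intend to find, not a proof.

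There is also a quantitative gap earlier in the chain. Even granting that every shadow polygon $Q$ is centrally symmetric, equality of the lengths of antipodal edges of $Q$ only says that $F(P,u)$ and $F(P,-u)$ have equal widths in the single direction $L^\perp\cap u^\perp$; it does not yield $\vol_{d-1}\big(F(P,-u)\big)=\vol_{d-1}\big(F(P,u)\big)$, which is what the surface-area-measure reduction requires. Letting $L$ range over ridge directions produces only finitely many width directions, so one cannot even fall back on a difference-body argument. The known proofs obtain the volume equality because the propagation step delivers the stronger conclusion that $F(P,-u)$ is a translate of $-F(P,u)$ (hence congruent to $F(P,u)$); your argument needs that stronger statement, and it is precisely the part that is missing. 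To complete the proof you would have to formulate and prove the coherence lemma: if $F$ and $G$ are adjacent facets sharing a ridge $R$, and $F$ has an opposite facet $F'$ that is a translate of $-F$, then $G$ has an opposite facet $G'$ that is a translate of $-G$ and shares with $F'$ the ridge that is the corresponding translate of $-R$; then propagate over the (connected) facet adjacency graph and check consistency around the $2$-spheres of ridges --- which is where $d\ge 3$ genuinely enters.
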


The following statement appeared in \cite[Corollary 7.7]{barvinok99}, but we offer a proof here that is in the spirit of the current work.

\begin{lemma}[Barvinok]  \label{cs.facets}
Suppose $P$ is a $d$-dimensional integer polytope in $\R^d$ 
all of whose facets are centrally symmetric.  Then $P$ is a concrete polytope. 
\end{lemma}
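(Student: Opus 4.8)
The plan is to verify concreteness through the Fourier-analytic criterion of Lemma~\ref{lm:APt-hat1P}. For a positive \emph{integer} $t$ and $\xi \in \Z^d$ we have $t\xi \in \Z^d$, so if I can show that $\hat{\bm{1}}_P(\xi) = 0$ for every $\xi \in \Z^d \setminus \{0\}$, then every term with $\xi \neq 0$ in the sum of Lemma~\ref{lm:APt-hat1P} vanishes, only the term $\xi = 0$ (equal to $\hat{\bm{1}}_P(0) = \vol(P)$) survives, and $A_P(t) = \vol(P)t^d$. Thus the whole problem reduces to the single claim that \emph{$\hat{\bm{1}}_P$ vanishes on $\Z^d \setminus \{0\}$}. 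Before proving this I would dispose of the low-dimensional cases: for $d = 1$ it is immediate, and for $d = 2$ every edge is automatically centrally symmetric while the conclusion is exactly Pick's theorem (recalled in this section), so I restrict to $d \geq 3$. In that range Lemma~\ref{cs1} (Alexandrov--Shephard) upgrades the hypothesis to the statement that $P$ itself is centrally symmetric.

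The core of the argument is to show that, for $\xi \in \Z^d \setminus \{0\}$, the number $\hat{\bm{1}}_P(\xi)$ is simultaneously purely imaginary and real, hence $0$. For the \textbf{imaginary} half I would apply the divergence theorem to the field $V(x) = \frac{i\xi}{2\pi\|\xi\|^2}e^{-2\pi i \innerp{x}{\xi}}$, whose divergence is $e^{-2\pi i\innerp{x}{\xi}}$; this rewrites
\[
\hat{\bm{1}}_P(\xi) = \frac{i}{2\pi\|\xi\|^2}\sum_{F'} \innerp{\xi}{n_{F'}}\, I_{F'}(\xi), \qquad I_{F'}(\xi) := \int_{F'} e^{-2\pi i \innerp{x}{\xi}}\, dS(x),
\]
the sum running over the facets $F'$ of $P$ with outward unit normals $n_{F'}$ and $dS$ the surface measure of the facet hyperplane. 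Each facet $F'$ is an integer polytope and, by hypothesis, centrally symmetric about some center $c_{F'}$; pairing its vertices shows $2c_{F'} \in \Z^d$, so the isometry $x \mapsto 2c_{F'} - x$ preserves $F'$ and its surface measure, yielding $I_{F'}(\xi) = e^{-2\pi i \innerp{2c_{F'}}{\xi}}\,\overline{I_{F'}(\xi)} = \overline{I_{F'}(\xi)}$ because $\innerp{2c_{F'}}{\xi}\in\Z$. Hence every $I_{F'}(\xi)$ is real, and the display forces $\hat{\bm{1}}_P(\xi) \in i\R$. For the \textbf{real} half I would use the central symmetry of $P$ about its center $c$ (again $2c \in \Z^d$ by vertex pairing): the same substitution gives $\hat{\bm{1}}_P(\xi) = e^{-2\pi i\innerp{2c}{\xi}}\,\overline{\hat{\bm{1}}_P(\xi)} = \overline{\hat{\bm{1}}_P(\xi)}$, so $\hat{\bm{1}}_P(\xi) \in \R$. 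Combining the two halves gives $\hat{\bm{1}}_P(\xi) = 0$, as required.

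I expect the main obstacle to be bookkeeping rather than conceptual: one must check carefully that the centers of $P$ and of each facet are half-integral (so that their inner products with integer $\xi$ are integers and the phase factors collapse to $1$), and that the reflection really is a measure-preserving self-map of each facet inside its own hyperplane. A secondary subtlety is that the clean ``real and imaginary'' dichotomy only closes up once $P$ is known to be centrally symmetric, which is exactly why the reduction to $d \geq 3$ via Lemma~\ref{cs1} is needed and why the genuinely different case $d = 2$ must be handled by Pick's theorem; indeed, facet symmetry alone forces $\hat{\bm{1}}_P(\xi)$ to be purely imaginary but not necessarily zero, as the non-symmetric standard triangle illustrates.
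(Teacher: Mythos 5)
Your proof is correct, and its skeleton coincides with the paper's: both reduce concreteness to the vanishing of $\hat{\bm{1}}_P$ at nonzero integer frequencies via Lemma~\ref{lm:APt-hat1P}, both pass to a sum of facet integrals by a single application of the divergence/Stokes formula, and both invoke Lemma~\ref{cs1} to upgrade facet symmetry to global central symmetry. Where you genuinely differ is the cancellation mechanism at the end. The paper pairs each facet $F$ with its antipodal facet $F' = F + c$, argues $c \in \Z^d$, and cancels the two terms against each other using $\hat{\bm{1}}_{F+c}(t\xi) = \hat{\bm{1}}_F(t\xi)e^{-2\pi i \innerp{t\xi}{c}}$ together with $N_P(F') = -N_P(F)$. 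You instead show that each individual facet integral $I_{F'}(\xi)$ is real (reflection of $F'$ through its own half-integral center), so that $\hat{\bm{1}}_P(\xi)$ is purely imaginary, and separately that $\hat{\bm{1}}_P(\xi)$ is real (reflection of $P$ through its half-integral center), forcing $\hat{\bm{1}}_P(\xi) = 0$; all the half-integrality and measure-preservation checks you flag are routine and you handle them correctly. Your route actually buys something you did not cash in: the ``purely imaginary'' half alone already finishes the proof, because $A_P(t) - \vol(P)t^d$ is real while your facet computation (equivalently, the right-hand side of \eqref{imaginary}) exhibits it as $i$ times a limit of real numbers, hence it lies in $\R \cap i\R = \{0\}$. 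That observation would let you drop Lemma~\ref{cs1} and the case split on $d$ entirely, and would recover the $d=1,2$ cases (including Pick's theorem) in the same stroke, since edges of an integer polygon are automatically centrally symmetric. As written, though, your two-sided real/imaginary argument is complete and valid as it stands.
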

\begin{proof}
We recall the formula for the solid angle polynomial $A_P(t)$ from Lemma~\ref{lm:APt-hat1P}:
\begin{equation}\label{eq:APtsum}
A_P(t) = \lim_{\epsilon \to 0^+} \sum_{\xi \in \Z^d} \hat{\bm{1}}_{tP}(\xi)e^{-\pi\epsilon\|\xi\|^2}.
\end{equation}

The Fourier transform of the indicator function of a polytope may be written as follows, after one application of the `combinatorial Stokes' formula (see \cite{diaz16}, equation (26)):
\begin{align}  
\hat{\bm{1}}_{tP}(\xi) = t^d \vol(P)[\xi =0]
+ \left( \frac{-1}{2 \pi i}  \right)      t^{d-1} 
\sum_{{\substack{F \subset P  \\ \dim F = d-1}}} 
 \frac{\langle \xi, N_P(F) \rangle }{\| \xi \|^2}      \hat{\bm{1}}_F       (t \xi) [\xi \not= 0],
\end{align}
where we sum over all facets $F$ of $P$.   Plugging this into~\eqref{eq:APtsum} we get
\begin{align}  \label{imaginary}
A_P(t)  - t^d \vol(P)
= \left( \frac{-1}{2 \pi i} \right) t^{d-1}
\lim_{\epsilon \to 0^+} \sum_{\xi \in \Z^d \setminus\{0\}}
     \frac{e^{-\pi\epsilon\|\xi\|^2}}{\| \xi \|^2} \sum_{{\substack{F \subset P  \\ \dim F = d-1}}}
\langle \xi, N_P(F) \rangle   \hat{\bm{1}}_F(t \xi) 
\end{align}
Thus, if we show that the latter sum over the facets vanishes, then we are done.

The assumption that all facets of $P$ are centrally symmetric implies that $P$ itself is also centrally symmetric, by Lemma~\ref{cs1}. 
We may therefore combine the facets of $P$ in pairs of opposite facets $F$ and $F'$. We know that $F' = F + c$, where $c$ is an integer vector, using the fact that the facets are centrally symmetric.

Therefore, since $N_P(F') = - N_P(F)$, we have
\begin{align*}
\langle \xi, N_P(F) \rangle  &\hat{\bm{1}}_{F}(t \xi) 
+ \langle \xi, -N_P(F) \rangle   \hat{\bm{1}}_{F+ c}(t \xi)\\
&= \langle \xi, N_P(F) \rangle   \hat{\bm{1}}_{F}(t \xi) 
- \langle \xi, N_P(F) \rangle   \hat{\bm{1}}_{F}(t \xi) 
e^{-2\pi i\langle   t\xi, c  \rangle} \\
&=  \langle \xi, N_P(F)\hat{\bm{1}}_{F}(t \xi) 
\big( 1  -  e^{-2\pi i\langle   t\xi, c  \rangle} \big) = 0,
\end{align*}
because $\langle t\xi, c \rangle \in \Z$ for $\xi \in \Z^d$ and $t \in \Z$.  We conclude that the entire right-hand side of \eqref{imaginary} vanishes, proving the lemma.
\end{proof}

Fourier analysis can also be used to give more general classes of polytopes that satisfy the formula $A_P(t) = \vol(P)t^d$, for positive integer values of $t$. A polytope $P$ is said to \textbf{$k$-tile} $\R^d$ (or \textbf{multi-tile $\R^d$ at level $k$}) by integer translations, if 
\begin{equation} \label{definition of multitiling}
\sum_{\lambda \in \Z^d}\bm{1}_P(x - \lambda) = k
\end{equation}
for every $x \notin \partial P + \Z^d$. Gravin, Robins, and Shiryaev~\cite[Theorem 6.1]{gravin12} gave a characterization of these polytopes in terms of solid angles.

\begin{theorem}[Gravin, Robins, Shiryaev]
A polytope $P$ $k$-tiles $\R^d$ by integer translations if and only if
\[\sum_{\lambda \in \Z^d} \omega_{P + v}(\lambda) = k,\]
for every $v \in \R^d$.
\end{theorem}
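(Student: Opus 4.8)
The plan is to translate \emph{both} sides of the claimed equivalence into a single statement about the Fourier coefficients of the periodization
\[
g(x) := \sum_{\lambda \in \Z^d}\bm{1}_P(x+\lambda),
\]
regarded as a function on the torus $\R^d/\Z^d$, and to show that each side is equivalent to the condition $\hat{\bm{1}}_P(\xi) = k$ when $\xi = 0$ and $\hat{\bm{1}}_P(\xi)=0$ for all $\xi \in \Z^d \setminus\{0\}$. First I would record the standard ``unfolding'' identity: since $P$ is bounded, $g$ is a bounded, integer-valued, $\Z^d$-periodic function whose Fourier coefficient at $\xi \in \Z^d$ is $\int_{[0,1]^d} g(x)\, e^{-2\pi i\innerp{x}{\xi}}\diff x = \hat{\bm{1}}_P(\xi)$. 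The multi-tiling condition \eqref{definition of multitiling} asserts that $g = k$ off the measure-zero set $\partial P + \Z^d$; because $g$ is locally constant on the complement of this set, that is equivalent to $g = k$ almost everywhere, and hence (for a bounded function on the torus) to $\hat{\bm{1}}_P(\xi)= k$ at $\xi=0$ and $0$ otherwise. This disposes of the tiling side.

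For the solid-angle side I would run the Gaussian-regularization argument of Lemma~\ref{lm:APt-hat1P} on the translate $P+v$ (noting $\omega_{P+v}(\lambda)=\omega_P(\lambda-v)$ for intuition). Writing the sum as a limit of $\sum_{\lambda}(\bm{1}_{P+v}\ast\phi_{d,\epsilon})(\lambda)$ via Lemma~\ref{thm:conv-series}, applying Poisson summation (Theorem~\ref{thm:poisson}) with $\Lambda=\Z^d$, and using the translation rule \eqref{eq:fourier1} in the form $\hat{\bm{1}}_{P+v}(\xi)=e^{-2\pi i\innerp{v}{\xi}}\hat{\bm{1}}_P(\xi)$, I obtain
\[
\sum_{\lambda \in \Z^d}\omega_{P+v}(\lambda) = \lim_{\epsilon\to 0^+}\sum_{\xi\in\Z^d}\hat{\bm{1}}_P(\xi)\, e^{-\pi\epsilon\|\xi\|^2}\, e^{-2\pi i\innerp{v}{\xi}} =: \lim_{\epsilon\to 0^+} h_\epsilon(v).
\]
If $\hat{\bm{1}}_P(\xi)=0$ for $\xi\neq 0$ and $\hat{\bm{1}}_P(0)=k$, then only the $\xi=0$ term survives and the sum equals $k$ for every $v$; this is the easy direction.

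The converse is where the real work lies. Assuming $\sum_{\lambda}\omega_{P+v}(\lambda)=k$ for all $v$, I would integrate against $e^{2\pi i\innerp{v}{\eta}}$ over the torus and pass to the limit. For each fixed $\epsilon>0$, $h_\epsilon$ is an absolutely convergent Fourier series whose coefficient at $\eta$ is $\hat{\bm{1}}_P(\eta)\, e^{-\pi\epsilon\|\eta\|^2}$; equivalently, since $e^{-\pi\epsilon\|\xi\|^2}=\hat{\phi}_\epsilon(\xi)$, the function $h_\epsilon$ is (up to reflection) the torus convolution of $g$ with the periodized heat kernel $\sum_{\lambda}\phi_{d,\epsilon}(\cdot+\lambda)$. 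That kernel is nonnegative with unit mass, so Young's inequality gives the uniform bound $\|h_\epsilon\|_\infty \le \|g\|_\infty < \infty$. Combining this bound with the pointwise convergence $h_\epsilon(v)\to k$ on the finite-measure torus, the bounded convergence theorem lets me interchange limit and integral, yielding
\[
\hat{\bm{1}}_P(\eta) = \lim_{\epsilon\to 0^+}\hat{\bm{1}}_P(\eta)\,e^{-\pi\epsilon\|\eta\|^2} = \int_{[0,1]^d} k\, e^{2\pi i\innerp{v}{\eta}}\diff v,
\]
which equals $k$ for $\eta=0$ and $0$ otherwise. This is exactly the tiling condition, closing the equivalence.

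The main obstacle is precisely this interchange of $\lim_{\epsilon\to 0^+}$ with the torus integral in the hard direction: naively the terms $\hat{\bm{1}}_P(\xi)$ only decay like $\|\xi\|^{-1}$ in generic directions, so one cannot sum at $\epsilon=0$. What makes the step routine is the uniform $L^\infty$ bound on $h_\epsilon$, which I would verify explicitly from the positivity and unit mass of the periodized Gaussian summability kernel; everything else reduces to the unfolding identity and the Fourier-analytic machinery (Lemmas~\ref{thm:conv-series} and~\ref{lm:APt-hat1P} and Theorem~\ref{thm:poisson}) already assembled in this paper.
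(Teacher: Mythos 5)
The paper does not actually prove this statement: it is quoted from Gravin, Robins, and Shiryaev \cite[Theorem 6.1]{gravin12}, and the only argument supplied in Section~\ref{sec:concrete} is the adjacent observation that $k$-tiling is equivalent to $\hat{\bm{1}}_P(\xi)=0$ for all $\xi\in\Z^d\setminus\{0\}$ together with $\hat{\bm{1}}_P(0)=k$. So there is no in-paper proof to match against; judged on its own, your argument is correct and complete. Your treatment of the tiling side coincides with the paper's remark about the Fourier series of the periodization $g$ (you additionally justify the passage between ``$g=k$ almost everywhere'' and ``$g=k$ off $\partial P+\Z^d$'' via local constancy of $g$ on the open complement, a point the paper leaves implicit), and your easy direction is exactly the Gaussian-regularization and Poisson-summation computation of Lemma~\ref{lm:APt-hat1P}, applied to the translate $P+v$. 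The substantive new content relative to this paper is the converse, and it holds up: unfolding gives
\[
h_\epsilon(v)=\int_{[0,1]^d} g(w)\,\Big(\sum_{\lambda\in\Z^d}\phi_{d,\epsilon}(v+w+\lambda)\Big)\diff w,
\]
which exhibits $h_\epsilon$ as the correlation of the bounded function $g$ with a nonnegative, unit-mass periodic kernel, whence $\|h_\epsilon\|_\infty\le\|g\|_\infty$ uniformly in $\epsilon$; the hypothesis gives the pointwise convergence $h_\epsilon(v)\to k$, bounded convergence then lets you exchange $\lim_{\epsilon\to 0^+}$ with the torus integral, and the absolutely convergent series representation of $h_\epsilon$ identifies $\int_{[0,1]^d}h_\epsilon(v)e^{2\pi i\innerp{v}{\eta}}\diff v$ as $\hat{\bm{1}}_P(\eta)e^{-\pi\epsilon\|\eta\|^2}$. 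Every interchange you perform is covered by an estimate you state, so the equivalence closes; this is a clean Fourier-analytic proof of the cited result, consistent with (and extending) the machinery the paper already assembles.
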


Note that the sum on the left is equal to $A_{P+v}(1)$, so this condition can be rephrased as asking for the function $P \mapsto A_P(1)$ to be invariant under all real translates of~$P$. 
To see how multi-tiling implies the concrete polytope property, note that since $f(x) :=\sum_{\lambda \in \Z^d}\bm{1}_P(x - \lambda)$ is periodic modulo $\Z^d$, it has a Fourier series (see e.g.,~\cite[Chapter VII, Theorem 2.4]{stein71})
$f(x) = \sum_{\xi \in \Z^d}\hat{\bm{1}}_P(\xi)e^{2\pi i \innerp{\xi}{x}},$
and so $P$ k-tiles by integer translations if and only if $\hat{\bm{1}}_P(\xi) = 0$ for all $\xi \in \Z^d \setminus \{0\}$, and $\hat{\bm{1}}_P(0) = k = \vol(P)$. By Lemma~\ref{lm:APt-hat1P}, we see that this implies $A_P(t) = \vol(P)t^d$ for all $t \in \Z$, $t > 0$.

Note that the order simplex in Example~\ref{ex:order-simplex} doesn't $k$-tile $\R^3$ by integer translations;  however, this simplex is still concrete. To produce more general concrete polytopes,  we introduce two new concepts: 

The \textbf{Hyperoctahedral group} $\Bd$ is the group of symmetries of the hypercube $[-1,1]^d$; all of its $2^d d!$ elements are simultaneously unimodular and orthogonal transformations, hence when an element of this group is applied to a polytope it preserves its solid angle polynomial. 

The \textbf{polytope group} $\mathscr{P}^d$ (cf.~\cite[Section 3.2]{lev18}) is the abelian group formally generated by the elements $[A]$ where $A$ runs through all sets in $\R^d$ which can be represented as the union of a finite number of polytopes with disjoint interiors and subject to the relations $[A] + [B] = [A \cup B]$ whenever $A$ and $B$ are two sets with disjoint interiors. Since any element $P \in \mathscr{P}^d$ can be uniquely represented as a finite sum $Q = \sum_j m_j[A_j]$ where $m_j$ are distinct nonzero integers and $A_j$ are sets with pairwise disjoint interiors, any additive function $\varphi$ defined on the set of polytopes in~$\R^d$ (such as the volume $A \mapsto \vol(A)$ or the indicator function $A \mapsto \bm{1}_A$ viewed as a function in $L^1(\R^d)$) can be uniquely extended to a function in $\mathscr{P}^d$ by linearity, that is, $\varphi(Q) := \sum_j m_j \varphi(A_j)$ for an element $Q$ written as above. With this extension, the definition of multi-tiling can also be extended to $\mathscr{P}^d$.

With these definitions, we may adapt the proof of (the forward direction of)~\cite[Theorem 4.1]{lev18} and prove the following more general sufficiency condition for the concrete polytope property.

\begin{theorem}\label{thm:poly-group}
If $P$ is a rational polytope in $\R^d$ such that $Q := \sum_{\gamma \in \Bd} [\gamma P]$ multi-tiles $\R^d$ by integer translations, then $A_P(t) = \vol(P)t^d$ for all positive integers $t$.
\end{theorem}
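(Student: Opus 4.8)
The plan is to transfer the multi-tiling hypothesis on the symmetrized object $Q = \sum_{\gamma \in \Bd}[\gamma P]$ into a statement about the solid angle sum of $P$, by combining the Fourier-analytic formula of Lemma~\ref{lm:APt-hat1P} with the Fourier characterization of multi-tiling recorded just above. The central observation is that every $\gamma \in \Bd$ is simultaneously orthogonal and unimodular, so it preserves both solid angles and the lattice $\Z^d$; consequently $A_{\gamma P}(t) = A_P(t)$ for each $\gamma$, and also $\vol(\gamma P) = \vol(P)$.

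First I would extend the relevant additive quantities from single polytopes to the polytope group element $Q \in \mathscr{P}^d$ by linearity, setting $\bm{1}_Q := \sum_{\gamma \in \Bd}\bm{1}_{\gamma P}$, $A_Q(t) := \sum_{\gamma \in \Bd} A_{\gamma P}(t)$, and $\vol(Q) := \sum_{\gamma \in \Bd}\vol(\gamma P)$; the solid angle sum is legitimate here because for each fixed $t$ the map $P \mapsto A_P(t)$ is a simple valuation and hence respects the defining relations of $\mathscr{P}^d$. By the $\Bd$-invariance just noted, $A_Q(t) = |\Bd|\, A_P(t)$ and $\vol(Q) = |\Bd|\,\vol(P)$, where $|\Bd| = 2^d d!$. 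Since Lemma~\ref{lm:APt-hat1P} is derived from Poisson summation and both of its sides are additive in the polytope, it extends verbatim to $Q$, giving
\[A_Q(t) = t^d \lim_{\epsilon \to 0^+} \sum_{\xi \in \Z^d} \hat{\bm{1}}_Q(t\xi)\,e^{-\pi\epsilon\|\xi\|^2}.\]

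Next I would invoke the multi-tiling hypothesis. The Fourier characterization — that a set $k$-tiles by integer translations precisely when its indicator has vanishing Fourier transform at all nonzero points of $\Z^d$ and equals $k = \vol$ at the origin — applies to $\bm{1}_Q$ by the same periodization-and-Fourier-series argument, since $\bm{1}_Q \in L^1(\R^d)$. Thus $\hat{\bm{1}}_Q(\eta) = 0$ for every $\eta \in \Z^d \setminus \{0\}$, while $\hat{\bm{1}}_Q(0) = \vol(Q)$. Here the hypothesis that $t$ is a positive \emph{integer} is essential: for such $t$ and $\xi \in \Z^d$ we have $t\xi \in \Z^d$, with $t\xi = 0$ if and only if $\xi = 0$. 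Therefore every summand with $\xi \neq 0$ vanishes, the limit disappears, and the displayed formula collapses to $A_Q(t) = t^d\,\hat{\bm{1}}_Q(0) = t^d\,\vol(Q) = |\Bd|\,\vol(P)\,t^d$. Comparing this with $A_Q(t) = |\Bd|\,A_P(t)$ and dividing by $|\Bd|$ yields $A_P(t) = \vol(P)\,t^d$.

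The steps are individually short, so the main obstacle is bookkeeping rather than analysis: one must check carefully that the passage to $\mathscr{P}^d$ is fully justified — that $\bm{1}_Q$, which may take integer values greater than $1$ on the overlaps of the various $\gamma P$, still satisfies the periodization/Fourier-series identity underlying the multi-tiling criterion, and that Lemma~\ref{lm:APt-hat1P} genuinely survives the passage to a formal integer combination of overlapping polytopes. Once the linearity of $\hat{\bm{1}}_{(\cdot)}$, of $A_{(\cdot)}(t)$, and of $\vol(\cdot)$ over the polytope group is securely in place, together with the $\Bd$-invariance of $A_P(t)$ and $\vol(P)$, the argument goes through directly.
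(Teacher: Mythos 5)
Your proof is correct, but it takes a genuinely different route from the paper's. The paper reduces the statement to an equidecomposability result of Lev and Liu (their Proposition~3.4): since $Q_t - (t^dk)[D]$ tiles at level zero, it can be written as $\sum_j([B_j]-[B_j'])$ with each $B_j'$ an integer translate of $B_j$, and the translation-invariance of $A_{(\cdot)}(1)$ under integer translations then forces $A_{Q_t}(1) = t^d k\, A_D(1) = t^d k$, from which the conclusion follows exactly as in your final step. You instead push the paper's own Fourier-analytic argument for the single-polytope multi-tiling case (the discussion just before the theorem, combining Lemma~\ref{lm:APt-hat1P} with the criterion $\hat{\bm{1}}_Q(\xi)=0$ for $\xi \in \Z^d\setminus\{0\}$) through the symmetrization, using linearity of $\bm{1}_{(\cdot)}$, of the Fourier transform, and of the simple valuation $A_{(\cdot)}(t)$ over the formal sum $\sum_{\gamma \in \Bd}[\gamma P]$, together with $\Bd$-invariance of $A_P(t)$ and $\vol(P)$. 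All the ingredients you use are already established in the paper, and the bookkeeping you flag (that the extension to $\mathscr{P}^d$ only ever uses the one fixed representation of $Q$, so no well-definedness issue arises, and that the periodization/Fourier-series identity holds for the bounded compactly supported function $\bm{1}_Q$ just as for an indicator) does go through. What each approach buys: yours is self-contained within the paper's Poisson-summation machinery and avoids citing the external structural theorem; the paper's is more combinatorial and makes transparent that the only property of $A_{(\cdot)}(1)$ being used is its invariance under integer translations, which is why the same proof template applies to other lattice-invariant simple valuations.
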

\begin{proof}
If $P$ is a rational polytope such that $Q = \sum_{\gamma \in \Bd} [\gamma P]$ $k$-tiles $\R^d$ by integer translations, then for a positive integer $t$ we also have that $Q_t := \sum_{\gamma \in \Bd} [\gamma (tP)]$ $(t^dk)$-tiles $\R^d$. Let $D := [0,1]^d$, then $Q_t - (t^dk)[D]$ tiles at level zero by integer translations and by~\cite[Proposition 3.4]{lev18} we can represent it as a finite sum $Q_t - (t^dk)[D] = \sum_j([B_j] - [B_j'])$ where for each $j$, $B_j$, $B_j'$ are polytopes such that $B_j'$ is obtained from $B_j$ by a translation along an integer vector, thus $A_{B_j}(1) = A_{B_j'}(1)$. Hence 
\[A_{Q_t}(1) = t^dk A_D(1) = t^dk = \vol(Q)t^d = |\Bd|\vol(P)t^d,\]
where we have used that if $Q$ $k$-tiles $\R^d$ by integer translations, then $k = \vol(Q)$ and that the action of $\Bd$ preserves volumes, thus $\vol(\gamma P) = \vol(P)$ for all $\gamma \in \Bd$. Also,
\[A_{Q_t}(1) = \sum_{\gamma \in \Bd}A_{\gamma(tP)}(1) = \sum_{\gamma \in \Bd}A_{tP}(1) = |\Bd| A_P(t). \qedhere\]
\end{proof}

\begin{example}
The simplex $\vartriangleleft$, which we used in Example~\ref{ex:order-simplex}, is now seen to satisfy 
 the condition of Theorem~\ref{thm:poly-group}, because  it tiles the cube together with the reflections corresponding to the six permutations of coordinates and these reflections are a subgroup of $\mathsf{B}_3$. Further, the simplex $\frac{1}{2}\!\vartriangleleft$ also satisfies the hypothesis of    Theorem~\ref{thm:poly-group}
 (because the orbit of $\frac{1}{2}\!\vartriangleleft$ under the action of $\mathsf{B}_3$ produces the cube $[-1/2, 1/2]^3$ that tiles the space by integer translations) and this is an example of a rational (and non-integer) polytope that has the concrete polytope property.  
 
As a side-note, this fact can also be seen in the expression given for $A_{\vartriangleleft}(t)$ in Example~\ref{ex:order-simplex}, verifying the fact that  $a_2(1/2) = a_1(1/2) = a_0(1/2) = 0$, and using the fact that all coefficients of the  quasi-polynomial have period $1$. 
\end{example}

It seems natural to ask whether multi-tiling is a necessary and sufficient condition for a polytope to be concrete, as follows.

\noindent
{\bf Question}.  \label{conj:multitiling}
Is a rational polytope $P \subset \R^d$ concrete if and only if 
\[
Q :=\sum_{\gamma \in \Bd} [\gamma P]
\]
multi-tiles $\R^d$ by integer translations?

This question has a negative answer, however, as very recently shown by  Garber and Pak~\cite{garber20}. They produced a counterexample in $\R^3$, based on the Dehn invariant of the direct sum of some tetrahedra and then extended it to $\R^d$. It remains an open question to give necessary and sufficient conditions for which $P$ is concrete. Perhaps a more general type of tiling is required.

\medskip
\noindent
{\bf Question.}
Suppose we know the solid angle quasi-polynomial $A_P(t)$, for all positive~$t$, but we also know that it is associated to a rational polytope $P$.  Can we recover $P$ completely, up to the action of the finite hyperoctahedral  group $\Bd$?

\medskip
\noindent
{\bf Question.}
Can the current theory be extended to all real polytopes?  

\bigskip
\bibliographystyle{plain}

\appendix

\section{Local formulas, SI-interpolators and alternative approaches}~\label{ap:ehrhart-to-solidangle}

In this appendix we briefly summarize the method developed by Berline and Vergne~\cite{berline07}, which produce local formulas for the Ehrhart quasi-coefficients based on a connection between exponential sums and integrals. We refer the reader to the original paper~\cite{berline07} for the formal definitions and detailed proofs and also to the subsequent works of Barvinok~\cite{barvinok08} and Garoufalidis and Pommersheim~\cite{garoufalidis12}, from which we borrow some concepts.
 
Consider the set $V$ of all convex polyhedra in $\R^d$.  
A {\bf valuation} in $V$ is any map~$\phi$ from $V$ to some vector space 
that enjoys the property $\phi(\emptyset) = 0$ and also
\[
\phi(P \cup Q) = \phi(P) + \phi(Q) - \phi(P\cap Q),
\]
for all $P, Q \in V$ such that $P \cup Q \in V$. In other words, valuations respect the inclusion-exclusion property enjoyed by polyhedra. 
Our goal here is to describe a valuation $\mu$ that associates to every polyhedra an analytic function on $\C^d$ which can be used to define a local formula for the Ehrhart polynomial~\cite[Corollary 30]{berline07}:
\[
|P \cap \Z^d| = \sum_{F \subseteq P} \vol^*(F) \mu\big(\tcone(P,F)\big)(0),
\]
and therefore for its coefficients
\[
e_k(P;t) = \sum_{\substack{F \subset P\\ \dim(F) = k}} \vol^*(F) \mu\big(\tcone(tP,tF)\big)(0).
\]

Here we focus on two particular valuations, called 
the \textbf{exponential integral}~$I(P)$ and the \textbf{exponential sum} $S(P)$,
which associates to each polyhedra $P \in V$ a meromorphic function on $\C^d$ that is zero on polyhedra that contains lines and such that for every $\xi \in \C^d$ which makes the right hand  side absolutely integrable or summable,
\[
I(P)(\xi) = \int_P e^{\innerp{\xi}{x}}\diff_P(x), \qquad S(P)(\xi) = \sum_{x \in P \cap \Z^d} e^{\innerp{\xi}{x}},
\]
where $\diff_P$ denotes the Lebesgue measure on the affine span of $P$ normalized so that $\det(\lin(P) \cap \Z^d) = 1$. Notice that $I(P)$ is the Fourier transform of $P$, up to a change of variables and an extension of domain.

Recall that if $F$ is a face of a polyhedron $P$, the tangent cone of $P$ at $F$ is 
\[
\tcone(P,F) := \{x + \lambda (y-x) : x \in F, y \in P, \lambda \geq 0\}.
\]
Berline and Vergne~\cite[Theorem 20]{berline07} proved the existence of a valuation $\mu$ which associates to every rational affine cone in $\R^d$ an analytic function such that for every rational polyhedron $P$, we have
\begin{equation}\label{eq:SI-interpolator}
S(P)(\xi) = \sum_{F \subseteq P} \mu\big(\tcone(P,F)\big)(\xi)I(F)(\xi),
\end{equation}
where the sum is taken over the set of all faces of $P$. The valuation $\mu$ is called a \textbf{SI-interpolator} by Garoufalidis and Pommersheim~\cite{garoufalidis12} and is uniquely defined up to a 
certain rule that extends functions initially defined on subspaces, to functions that exist on the entire space (e.g., orthogonal projection).

Now we assume that $P$ is a polytope (hence compact).  
Since $\mu\big(\tcone(P,F)\big)$ is an analytic function,  we may use its 
Taylor expansion at $0$ to define a differential operator for each face $F$ of $P$:
\begin{equation}\label{Differential operator}
D(P,F) := \mu\big(\tcone(P,F)\big)(\partial_x).
\end{equation}
This operator satisfies:
\begin{equation}
 D(P,F) e^{\innerp{\xi}{x}} = \mu\big(\tcone(P,F)\big)(\xi) e^{\innerp{\xi}{x}}.
\end{equation}
In other words, the valuations $\mu\big(\tcone(P,F)\big)(\xi)$ are eigenvalues of the differential 
operator  \eqref{Differential operator}. 

Now, for any polynomial $h(x)$ we may define an associated differential operator 
\[
D_h := h(\partial_{\xi}),
\]
which clearly satisfies
\[
 D_h e^{\innerp{\xi}{x}} = h(x) e^{\innerp{\xi}{x}}.
\]
Next, we take~\eqref{eq:SI-interpolator} and apply the definition of the operator 
$D(P,F)$ to get
\begin{align*}
S(P)(\xi) &= \sum_{F \subseteq P} \mu\big(\tcone(P,F)\big)(\xi)I(F)(\xi)\\
&= \sum_{F \subseteq P} \int_F \mu\big(\tcone(P,F)\big)(\xi)e^{\innerp{\xi}{x}}\diff_F(x)\\
&= \sum_{F \subseteq P} \int_F D(P,F)e^{\innerp{\xi}{x}}\diff_F(x).
\end{align*}
Applying $D_h$ to both sides,
\begin{align*}
D_hS(P)(\xi) &= \sum_{F \subseteq P} \int_F D(P,F)D_he^{\innerp{\xi}{x}}\diff_F(x)\\
\sum_{x \in P \cap \Z^d} h(x) e^{\innerp{\xi}{x}} &= \sum_{F \subseteq P} \int_F D(P,F) h(x)e^{\innerp{\xi}{x}}\diff_F(x).
\end{align*}
Evaluating the latter identity at $\xi = 0$, we get:
\begin{equation}\label{eq:euler-maclaurin}
\sum_{x \in P \cap \Z^d} h(x) = \sum_{F \subseteq P} \int_F D(P,F) h(x)\diff_F(x).
\end{equation}

Equation~\eqref{eq:euler-maclaurin} is called an Euler-Maclaurin summation formula since the sum 
on the right is expressed in terms of integrals taken over the faces of $P$,  of functions that depend only on local information along each face. 
Applying it to the constant function $h(x) = 1$ and noticing that the constant term of $D(P,F)$ is equal to $\mu\big(\tcone(P,F)\big)(0)$, we get a local formula for the Ehrhart polynomial~\cite[Corollary 30]{berline07}:
\[
 |P \cap \Z^d| = \sum_{F \subseteq P} \vol^*(F) \mu\big(\tcone(P,F)\big)(0),
\]
and thus for its coefficients
\[
 e_k(P;t) = \sum_{\substack{F \subset P\\ \dim(F) = k}} \vol^*(F) \mu\big(\tcone(tP,tF)\big)(0).
\]

Finally, we note that equation~\eqref{eq:SI-interpolator} together with $\mu({0})(\xi) = 1$ defines $\mu(K)$ recursively on the dimension of $K$ and this relation can be used to compute the coefficients for low dimensional cones~\cite[Proposition 31]{berline07}.

\bigskip

Once the formulas for the Ehrhart coefficients are obtained by the method outlined above, the solid angle sum of a polytope can also be obtained with the following formula
\begin{equation*}
A_P(t) = \sum_{F \subseteq P}\omega_P(F)L_{\innt(F)}(t) = \sum_{F \subseteq P}\omega_P(F)(-1)^{\dim(F)}L_{F}(-t).
\end{equation*}
This approach reverse the order of things we have taken in this paper. We take it here to complement Section~\ref{sec:Ehrhart} and show how the Ehrhart and solid angle sum expressions interrelate. Next we take the formulas from Theorems~\ref{thm:ed1-formula} and~\ref{thm:ed2-formula} as given and use them to recover Theorems~\ref{thm:ad1-formula} and~\ref{thm:ad2-formula}. 

Expanding the Ehrhart quasi-polynomials of each face and comparing quasi-coefficients, we get:
\begin{align*}
 \vol(P) &t^d + a_{d-1}(P;t)t^{d-1} + a_{d-2}(P;t)t^{d-2} + \dots\\
 &= \vol(P)t^d 
 + \Big( -e_{d-1}(P;-t) + \hspace{-.4cm}\sum_{\substack{F \subset P\\ \dim(F) = d-1}}\hspace{-.4cm}\frac{1}{2}e_{d-1}(F;-t) \Big)t^{d-1}\\
 &\ + \Big( e_{d-2}(P;-t) - \hspace{-.45cm}\sum_{\substack{F \subset P\\ \dim(F) = d-1}}\hspace{-.4cm}\frac{1}{2}e_{d-2}(F;-t) + \hspace{-.4cm}\sum_{\substack{G \subset P\\ \dim(G) = d-2}}\hspace{-.4cm}\omega_P(G)e_{d-2}(G;-t) \Big)t^{d-2}
 + \dots
\end{align*}

Great care has to be taken before using Theorems~\ref{thm:ed1-formula} and~\ref{thm:ed2-formula}, because these theorems assume the polytopes to be full-dimensional, while to use the expressions above we must consider all of the lower dimensional faces $F$. 

The main difference is that when $0 \notin \aff(F)$, then $L_F(t) = 0$ for all $t$ such that $\aff(tF)$ has no integer points. Letting $\bar{x}_F$ be the projection of $\aff(F)$ onto $\lin(F)^\perp$, we may express this condition equivalently as $t\bar{x}_F \notin \Lambda_F^* = \Proj_{\lin(F)^\perp}(\Z^d)$. Therefore we have to multiply the formulas from Theorems~\ref{thm:ed1-formula} and~\ref{thm:ed2-formula} by $\bm{1}_{\Lambda_F^*}(t\bar{x}_F)$ to take into account this effect.

Thus for $a_{d-1}(P;t)$ we obtain:
\begin{align*}
a_{d-1}(P;t) &= -e_{d-1}(P;-t) + \hspace{-.3cm} \sum_{\substack{F \subset P\\ \dim(F) = d-1}}\hspace{-.3cm} \frac{1}{2} e_{d-1}(F;-t)\\
&= \sum_{\substack{F \subset P\\ \dim(F) = d-1}} \hspace{-.3cm} \vol^*(F) \Big( \overline{B}_1^+\big(-\innerp{v_F}{\bar{x}_F}t\big) + \frac{1}{2}\bm{1}_{\Lambda_F^*}(-t\bar{x}_F) \Big)\\
&= \sum_{\substack{F \subset P\\ \dim(F) = d-1}} \hspace{-.3cm} \vol^*(F) \Big( \overline{B}_1\big(-\innerp{v_F}{\bar{x}_F}t\big) \Big)\\
&= -\hspace{-.4cm}\sum_{\substack{F \subset P\\ \dim(F) = d-1}} \hspace{-.3cm} \vol^*(F) \overline{B}_1\big(\innerp{v_F}{\bar{x}_F}t\big),
\end{align*}
where we have used that $v_F$ is $\Lambda_F$-primitive and hence $t\bar{x}_F \in \Lambda_F^*$ exactly when $\innerp{v_F}{\bar{x}_F}t \in \Z$.

For $a_{d-2}(P;t)$ we obtain:
\begin{align*}
a_{d-2}&(P;t) - e_{d-2}(P;-t) 
= -\hspace{-.3cm} \sum_{\substack{F \subset P\\ \dim(F) = d-1}}\hspace{-.3cm}\frac{1}{2}e_{d-2}(F;-t) + \hspace{-.4cm}\sum_{\substack{G \subset P\\ \dim(G) = d-2}} \hspace{-.4cm}\omega_P(G)e_{d-2}(G;-t)\\
&= \sum_{\substack{G \subset P\\ \dim(F) = d-2}} \hspace{-.3cm} \vol^*(G)\Big[ \bm{1}_{\Lambda_{G}^*}(t\bar{x}_G)\omega_P(G) + \frac{1}{2}
\bm{1}_{\Lambda_{F_1}^*}(t\bar{x}_{F_1})\overline{B}_1^+\Big( \Big\langle \frac{v_{F_1,G}}{\|v_{F_1,G}\|^2}, -t\bar{x}_G \Big\rangle \Big)\\ 
&\qquad+ \frac{1}{2} \bm{1}_{\Lambda_{F_2}^*}(t\bar{x}_{F_2})\overline{B}_1^+\Big( \Big\langle \frac{v_{F_2,G}}{\|v_{F_2,G}\|^2}, -t\bar{x}_G \Big\rangle \Big) \Big],
\end{align*}
where we have used that each codimension two face is a facet of exactly two codimension one faces and $v_{F_i,G}/\|v_{F_i,G}\|^2$ is the $\Lambda_G$-primitive vector in the direction of~$N_{F_i}(G)$. 

Next, since $\aff(G) \subset \aff(F)$, we may take $x_G \in \aff(G)$ as a representative of both $\aff(F_1)$ and $\aff(F_2)$. Using the expression $\bar{x}_G = x_1 v_{F_1, G} + x_2 v_{F_2, G}$ together with $\innerp{v_{F_1}}{t\bar{x}_G} = tkx_2$ (see the proof of Lemma~\ref{lem:hk}) we conclude that $t\bar{x}_{F_1} \in \Lambda_{F_1}^*$ if and only if $tkx_2 \in \Z$. Similarly, $t\bar{x}_{F_2} \in \Lambda_{F_2}^*$ if and only if $tkx_1 \in \Z$, so:
\begin{align*}
a_{d-2}&(P;t) - e_{d-2}(P;-t)\\
&= \sum_{\substack{G \subset P\\ \dim(F) = d-2}}\hspace{-.3cm}\vol^*(G)\Big[ \bm{1}_{\Lambda_{G}^*}(t\bar{x}_G)\omega_P(G) + \frac{1}{2} \bm{1}_{\Z}(kx_2t)\overline{B}_1^+\Big( \Big\langle \frac{v_{F_1,G}}{\|v_{F_1,G}\|^2}, -t\bar{x}_G \Big\rangle \Big)\\ 
&\qquad+ \frac{1}{2} \bm{1}_{\Z}(kx_1t)\overline{B}_1^+\Big( \Big\langle \frac{v_{F_2,G}}{\|v_{F_2,G}\|^2}, -t\bar{x}_G \Big\rangle \Big) \Big].
\end{align*}

Using $v_{F_2, G} = hv_{F_1,G} + kv_2$ and since $v_{F_1,G} / \|v_{F_1,G}\|^2 \in \Lambda_G$ and $v_2 \in \Lambda_G^*$, when~$kx_2t \in \Z$,
\begin{multline*}
\Big\langle \frac{v_{F_1,G}}{\|v_{F_1,G}\|^2}, -t\bar{x}_G \Big\rangle = -tx_1 -\frac{tx_2}{\|v_{F_1,G}\|^2} \innerp{v_{F_1,G}}{v_{F_2,G}}\\ 
= -tx_1 -thx_2 -tkx_2 \frac{\innerp{v_{F_1,G}}{v_2}}{\|v_{F_1,G}\|^2} = -tx_1 -thx_2\ (\hspace{-.33cm}\mod 1).
\end{multline*}
Similarly, let $h^{-1}$ be an integer such that $hh^{-1} = 1\  (\hspace{-.2cm}\mod k)$. Using $hv_{F_1,G} = v_{F_2,G} - kv_2$, $h^{-1} \in \Z$, $v_{F_2,G} / \|v_{F_2,G}\|^2 \in \Lambda_G$, and $v_2 \in \Lambda_G^*$, when $kx_1t \in \Z$, we get
\begin{multline*}
\Big\langle \frac{v_{F_2,G}}{\|v_{F_2,G}\|^2}, -t\bar{x}_G \Big\rangle = -tx_2 -tx_1 \Big\langle \frac{v_{F_2,G}}{\|v_{F_2,G}\|^2}, v_{F_1,G} \Big\rangle \\ 
= -tx_2 -tx_1h^{-1} \Big( 1 - k \Big\langle \frac{v_{F_2,G}}{\|v_{F_2,G}\|^2}, v_2 \Big\rangle \Big)\ (\hspace{-.33cm}\mod 1) = -t(h^{-1}x_1 + x_2)\  (\hspace{-.33cm}\mod 1).
\end{multline*}
Applying these relations to the main expression,
\begin{align*}
a_{d-2}&(P;t) - e_{d-2}(P;-t)\\
&= \sum_{\substack{G \subset P\\ \dim(G) = d-2}}\hspace{-.3cm}\vol^*(G)\Big[ \bm{1}_{\Lambda_{G}^*}(t\bar{x}_G)\omega_P(G) + \frac{1}{2} \bm{1}_{\Z}(kx_2t)\overline{B}_1^+\big(-(x_1 + hx_2)t \big)\\ 
&\qquad- \frac{1}{2} \bm{1}_{\Z}(kx_1t)\overline{B}_1\big((h^{-1}x_1 + x_2)t\big) - \frac{1}{4} \bm{1}_{\Z}(kx_1t)\bm{1}_{\Z}\big((h^{-1}x_1 + x_2)t\big) \Big].
\end{align*}

Next we use $\bm{1}_{\Z}(kx_1t)\bm{1}_{\Z}\big((h^{-1}x_1 + x_2)t\big) = \bm{1}_{\Lambda_{G}^*}(t\bar{x}_G)$. To see why this is true, from $\bar{x}_G = x_1 v_{F_1,G} + x_2 v_{F_2,G} = (x_1 + hx_2)v_1 + kx_2v_2$, we see that $\bar{x}_G \in \Lambda_G^*$ if and only if $kx_2 \in \Z$ and $x_1 + hx_2 \in \Z$. Hence, if $kx_1 \in \Z$ and $h^{-1}x_1 + x_2 \in \Z$, multiplying the second item by $h$ we conclude that $x_1 + hx_2 \in \Z$ while multiplying it by $k$ gives $h^{-1}kx_1 + kx_2 \in \Z$, so $kx_2 \in \Z$ and then $\bar{x}_G \in \Lambda_G^*$. The other direction is also easy.

Returning to the main expression,
\begin{align*}
a_{d-2}&(P;t) - e_{d-2}(P;-t)\\
&= \sum_{\substack{G \subset P\\ \dim(G) = d-2}}\hspace{-.3cm}\vol^*(G)\Big[ \Big(\omega_P(G) - \frac{1}{4}\Big) \bm{1}_{\Lambda_{G}^*}(t\bar{x}_G)\\
&\qquad+ \frac{1}{2} \bm{1}_{\Z}(kx_2t)\overline{B}_1^+\big(-(x_1 + hx_2)t \big) - \frac{1}{2} \bm{1}_{\Z}(kx_1t)\overline{B}_1\big((h^{-1}x_1 + x_2)t\big) \Big].
\end{align*}

To finish the verification of formula $a_{d-2}(P;t)$ from Theorem~\ref{thm:ad2-formula}, we must also take into account $e_{d-2}(P;-t)$ using the formula from Theorem~\ref{thm:ed2-formula}. For that, notice that the functions $\overline{B}_2$ are even while the Dedekind-Rademacher sum satisfies $s(h,k;-x,-y) = s(h,k;x,y)$. The other two terms with $\overline{B}_1$ cancels exactly the terms we got from the computation above, which completes the verification of Theorem~\ref{thm:ad2-formula}, given Theorem~\ref{thm:ed2-formula}.

\section{Proofs of two lemmas about lattices}

Here we prove the two lemmas from Section~\ref{sec:lattices}.

\detLperp*

\begin{proof}
Through this proof, for any set $v_1, \dots, v_s$ of vectors in $\R^d$, we use the notation $\det(v_1,\dots,v_s) := \det(V^{\sf T}V)^{1/2}$, where $V$ is the matrix with $v_1, \dots, v_s$ as columns.

Let $a_1, \dots, a_k$ be a basis for $L$ and $a_{k+1}, \dots, a_d$ be a completion to a basis for $\Lambda$ (that is possible since $L$ is primitive), so $\det(a_1,\dots,a_k) = \det(L)$ and $\det(a_1,\dots,a_d) = \det(\Lambda)$. Let $f_1, \dots, f_d$ be the dual basis for $\Lambda^*$, that is, $f_1, \dots, f_d$ are defined such that $\innerp{f_i}{a_j} = \delta_{i,j}$ for all $i, j = 1, \dots, d$. Note that $f_{k+1}, \dots, f_d$ is a basis for $L^\perp$, so $\det(f_{k+1}, \dots, f_d) = \det(L^\perp)$. 
 
Now, for $i = 1, \dots, k$, let $\tilde{f}_i := f_i - \Proj_{\spann(L)^\perp}(f_i)$, so that $\tilde{f}_i \in \spann(L)$ and $f_i - \tilde{f}_i \in \spann(L)^\perp = \spann(f_{k+1}, \dots, f_d)$. Since, for $i = 1, \dots, k$, the difference between $f_i$ and $\tilde{f}_i$ is a linear combination of $f_{k+1}, \dots, f_d$, we have that $\det(f_1, \dots, f_d) = \det(\tilde{f}_1, \dots, \tilde{f}_k, f_{k+1}, \dots, f_d)$ and since $\tilde{f}_1, \dots, \tilde{f}_k \in \spann(L)$ and $f_{k+1}, \dots, f_d \in \spann(L)^\perp$, we also have that 
 \begin{equation}\label{eq:det-prod}
\det(\tilde{f}_1, \dots, \tilde{f}_k, f_{k+1}, \dots, f_d) = \det(\tilde{f}_1, \dots, \tilde{f}_k) \det(f_{k+1}, \dots, f_d).  
 \end{equation}

Furthermore, since for all $i, j = 1, \dots, k$, $\langle \tilde{f_i}, a_j\rangle = \innerp{f_i}{a_j} = \delta_{i,j}$ and $\tilde{f_1}, \dots,$ $\tilde{f_k} \in \spann(L)$, they form a basis for $L^*$ and so $\det(\tilde{f}_1, \dots, \tilde{f}_k) = 1/\det(L)$.
 
Thus, from~\eqref{eq:det-prod}, we see that $1/\det(\Lambda) = \det(L^\perp)/\det(L)$, as desired.
\end{proof}

\duallattice*

\begin{proof}
As in the proof of the previous lemma, let $a_1, \dots, a_k$ be a basis for $L$, $a_{k+1}, \dots, a_d$ be a completion to a basis for $\Lambda$, and let $f_1, \dots, f_d$ be the dual basis for $\Lambda^*$, that is, $f_1, \dots, f_d$ are defined such that $\innerp{f_i}{a_j} = \delta_{i,j}$ for all $i, j = 1, \dots, d$. Denoting by $A_k$ the matrix with $a_1, \dots, a_k$ as columns, we have that $P = A_k(A_k^{\sf T}A_k)^{-1}A_k^{\sf T}$ is the orthogonal projection onto $\spann(L)$, indeed, $PA_k = A_k$ and $Pv = 0$ for $v \in \spann(L)^\perp$. Denoting by $F$ the matrix with $f_1, \dots, f_d$ as columns, we get that $\Pr_{\spann(L)}(\Lambda^*)$ is spanned by the columns of $PF = A_k(A_k^{\sf T}A_k)^{-1}A_k^{\sf T}F = \lp\begin{matrix} A_k(A_k^{\sf T}A_k)^{-1} \mid 0\end{matrix}\rp$. We finish the proof noting that the columns of $A_k(A_k^{\sf T}A_k)^{-1}$ are indeed a lattice basis for $L^*$, to see this simply note that $A_k^{\sf T}\lp A_k(A_k^{\sf T}A_k)^{-1}\rp = I$.
\end{proof}

\end{document}